\newtheorem{thm}{Theorem}[section]
\newtheorem{cor}[thm]{Corollary}
\newtheorem{lem}[thm]{Lemma}
\newtheorem{prop}[thm]{Proposition}
\newtheorem{remark}[thm]{Remark}
\def\OO{{\mathcal O}}
\def\FF{{\mathcal F}}
\def\div{{\textsf {div}}}
\def\grad{{\textsf {grad}}}
\begin{document}

\title[Dynamics of Levi-flat]{Topology and dynamics of Levi-flats in surfaces of general type}

\author{Bertrand Deroin \& Christophe Dupont}

\dedicatory{To the memory of Marco Brunella}

\maketitle

\markright{ \today}

\begin{abstract}  
We focus on the topology and dynamics of minimal sets and Levi-flats in surfaces of general type. Our method relies on the ergodic theory of Riemann surfaces laminations: we use harmonic measures and Lyapunov exponents. Our first result establishes that  minimal sets have large Hausdorff dimension when a leaf is simply connected.
Our second result shows that the class of Anosov Levi-flats does not occur in surfaces of general type. In particular, by using rigidity results, we obtain that Levi-flats are not be diffeomorphic to $\Gamma\backslash G$, where $G$ is either $\mathrm{Sol}$ or $ \mathrm{PSL}(2,\mathbb R)$ and $\Gamma$ is a cocompact lattice in $G$. 


\end{abstract}

\vskip .3 cm 

\footnotesize
\noindent \emph{Key Words} : Riemann surface laminations, surfaces of general type, harmonic measure, Lyapunov exponent.
\vskip .1 cm 
\noindent \emph{2010 AMS} : 37F75, 32V40, 37A50, 57R30, 14J29. 


\normalsize 
\section{Introduction}

In a complex surface $S$, a Levi-flat is a smooth real hypersurface $M$ whose Cauchy-Riemann distribution $TM\cap iTM$ is integrable. More generally, a minimal set $M$ is a closed set consisting of leaves of a singular holomorphic foliation on $S$, each of them being dense in $M$. Classical examples of  minimal sets and Levi-flats appear e.g. in flat $\mathbb {CP}^1$-bundles (saturated of the limit set of the monodromy), in complex tori (linear hypersurface) and in fibrations by holomorphic curves (preimage of a simple closed curve in the base). There are other examples in elliptic surfaces \cite{Ne} and Kummer surfaces \cite{Oh}.  Levi-flats in non K\"ahlerian surfaces appear e.g. in Hopf surfaces (Reeb components) and in Inoue surfaces (hyperbolic torus bundles). 

In the present work we study minimal sets and Levi-flats in surfaces of general type. These complex surfaces generalize hyperbolic Riemann surfaces. Examples are given by hypersurfaces of degree $\geq 5$ in $\mathbb P^3$, quotients of unit balls of $\mathbb C^2$, etc. Contrary to the case of the projective plane, there are plenty of examples of minimal sets and Levi-flat hypersurfaces, e.g. by taking ramified coverings of previous examples, see section~\ref{s:examples}. Our goal is to explore their topological and dynamical properties. 

\subsection{Minimal sets} \label{1.1}

The ramified construction mentionned above provides the most general and complicated dynamics that a Kleinian group can produce (Cantor set, quasi-circle, dendrite, etc), in particular, the Hausdorff dimension can be arbitrarily close to $2$. Observe that the topology of the leaves becomes very complicated by using that construction: their fundamental group is infinitely generated. Our first result shows that the dimension must be large if a leaf has no topology.\footnote{Anosov conjectured in the case of the complex projective plane that the leaves of generic singular holomorphic foliations are biholomorphic to the unit disc, appart a countable number which are annulus.}

\begin{thm} \label{t:non discrete}
Let $M$ be a minimal set in a surface of general type with no transverse invariant measure and with a simply connected leaf. Then the Hausdorff dimension of $M$ is $>3$, unless $M$ is an analytic Levi-flat hypersurface. In the latter case, the holonomy pseudo-group  is not discrete and the foliation is ergodic with respect to the Lebesgue measure on $M$. 
\end{thm}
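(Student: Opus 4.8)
The plan is to run the ergodic theory of the leafwise Brownian motion on $M$ and to feed the hypothesis ``general type'' into it through the positivity of $K_S$ and the identity $K_S = T^*\mathcal F\otimes N^*\mathcal F$. First I would reduce to the essential case: if $M$ has non‑empty interior it is saturated, hence equal to $S$ by minimality and $\dim_H M=4$; otherwise $M$ is disjoint from $\mathrm{Sing}(\mathcal F)$ and is a lamination by Riemann surfaces with a one‑complex‑dimensional transversal $T\subset\mathbb C$. Since there is no transverse invariant measure, the simply connected leaf cannot be parabolic (a parabolic leaf would produce an invariant transverse measure by Ces\`aro‑averaging leafwise currents), so it is biholomorphic to $\mathbb D$; by minimality the generic leaf is hyperbolic (Candel), $M$ carries a unique harmonic probability measure $m$ (Garnett), and locally $m$ factors as (positive leafwise harmonic function)$\,\times\,$(leafwise Poincar\'e area)$\,\times\,\mu_T$. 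I then bring in the foliated Brownian motion for the leafwise Poincar\'e metric, whose stationary measure is $m$, and its normal Lyapunov exponent $\lambda$, which is constant by ergodicity of $m$; by the theory of random conformal dynamical systems of Deroin--Kleptsyn, the absence of a transverse invariant measure forces $\lambda<0$, so the transverse holonomy contracts exponentially, at rate $|\lambda|$ in leafwise hyperbolic arclength, along $\nu_o$‑almost every geodesic ray issued from a base point $o$ of the simply connected leaf, where $\nu_o$ is the exit measure on the circle at infinity $S^1$ — in the Lebesgue class, the leaf being its own universal cover.

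The core is a lower bound for $\dim_H M$. Because the leafwise conditionals of $m$ are comparable to $2$‑dimensional Lebesgue measure, the local product structure of $m$ together with a Marstrand‑type slicing inequality gives $\dim_H M\ge \dim_H m\ge 2+\dim_H\mu_T$. On the other hand $\mu_T$ is the law of the transverse limit of the holonomy along the Brownian motion, i.e. the push‑forward of $\nu_o$ under the holonomy‑at‑infinity map $S^1\to T$; identifying it with the stationary measure of the random holonomy and invoking the principle ``dimension $=$ entropy$/$Lyapunov exponent'' for conformal stationary measures (Ledrappier, Hochman--Solomyak) one gets $\dim_H\mu_T=h/|\lambda|$, where $h$ is the Furstenberg entropy of $\mu_T$ and $h\le 1$ (a factor of the boundary entropy of the Poincar\'e disc). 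Hence $\dim_H M\ge 2+h/|\lambda|$, and the whole theorem comes down to the strict inequality $|\lambda|<h$, with the borderline $|\lambda|=h$ to be recognised as the Levi‑flat case.

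The inequality $|\lambda|\le h$ is exactly where ``general type'' is used. Via Deroin's integral formula, $\lambda$ is, up to the positive normalising factor $-\deg_m T\mathcal F$ (the total hyperbolic mass of $m$), the harmonic degree $\deg_m N\mathcal F$; rewriting $c_1(N\mathcal F)$ by means of $K_S = T^*\mathcal F\otimes N^*\mathcal F$ and using that $K_S$ is big, write $K_S\equiv A+E$ with $A$ ample and $E$ effective, so $\deg_m A>0$ and $\deg_m E\ge 0$ (the latter since $m$, having no transverse atoms, charges no algebraic curve), whence $\deg_m K_S>0$; this translates into $|\lambda|\le h$ and therefore $\dim_H M\ge 2+h/|\lambda|\ge 3$, with $\dim_H M>3$ unless $|\lambda|=h$. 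In that borderline case $\dim_H\mu_T=1$ and there is equality throughout the dimension estimate: $\mu_T$ is exact‑dimensional of full dimension $1$, hence absolutely continuous on its support, the holonomy‑at‑infinity map is bi‑Lipschitz onto its image — a rectifiable Jordan curve invariant under the holomorphic holonomy pseudo‑group, which is then real‑analytic by a Schwarz‑reflection argument — so $M$ is an analytic Levi‑flat hypersurface. Since $\mu_T$ is absolutely continuous, the leafwise‑Lebesgue measure of $M$ is equivalent to $m$, so the foliation is ergodic for Lebesgue by uniqueness of $m$; and the holonomy pseudo‑group cannot be discrete, because a non‑elementary discrete pseudo‑group of holomorphic germs carries a transverse harmonic measure of dimension strictly less than $1$ (singularity \`a la Guivarc'h--Le Jan), contradicting $\dim_H\mu_T=1$, while an elementary one would produce a transverse invariant measure.

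The step I expect to be the main obstacle is the sharp comparison $|\lambda|\le h$ coming from bigness of $K_S$, together with the rigidity analysis of the equality case: one must put Deroin's Lyapunov formula in a form in which the positivity of $K_S$ produces exactly the threshold value, and then upgrade the bare metric equality ($\dim_H\mu_T=1$, equality in the slicing inequality) to the real‑analyticity of the invariant transverse curve and to the non‑discreteness and ergodicity conclusions.
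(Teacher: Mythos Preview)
Your approach has a structural gap: you never invoke the discrete/non-discrete dichotomy for the holonomy pseudo-group, which is the pivot of the argument. The lower bound $\dim_H(M\cap\tau)\ge h/|\lambda|$ (only an inequality, not the equality you claim) is proved in the paper under the discreteness hypothesis --- it is obtained by extracting from the Brownian endpoints an iterated function system of holonomy germs with pairwise disjoint images, and discreteness is precisely what ensures that germs with nearly equal derivative at the origin form a finite set, so that enough distinct contractions survive to match the entropy count. Without discreteness the bound can fail outright, and indeed must fail on a Levi-flat: there the transversal is real one-dimensional while $h/|\lambda|>1$ in general type.

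Your identification of the Levi-flat case with the borderline $|\lambda|=h$ is therefore misplaced. In a surface of general type the adjunction computation gives $\lambda=-1+2\pi\,T\cdot K_S$ with $T\cdot K_S>0$ strictly, so $|\lambda|<1$; and with a simply connected leaf one has $h=1$ exactly (a pigeonhole argument shows all but countably many leaves are discs, hence $\mu$-a.e.\ leaf is), not merely $h\le 1$. Thus $|\lambda|<h$ is always strict and your equality case is vacuous. The actual route is: if the pseudo-group is discrete, the IFS construction gives transverse dimension $\ge h/|\lambda|>1$, whence $\dim_H M>3$; if it is not discrete, one uses the hyperbolic holonomy fixed point produced by Deroin--Kleptsyn together with the local holomorphic flows of Loray--Rebelo to conclude directly that $M$ is an analytic Levi-flat. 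The non-discreteness assertion in the Levi-flat case then follows a posteriori, since a discrete Levi-flat would force $\lambda\le -1$ by the same dimension inequality applied to a one-dimensional transversal, contradicting $\lambda>-1$. Your Schwarz-reflection and absolute-continuity analysis is aimed at a borderline that never arises.
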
  

The proof uses the stationary measure for the leafwise Brownian motions introduced by Garnett~\cite{Garnett}, commonly called harmonic measure. We obtain the following lower bound for the transverse Hausdorff dimension when the holonomy pseudogroup is discrete
\[ \text{Hausdorff dimension} \geq  \text{Kaimanovich entropy} / \vert \text{Lyapunov exponent} \vert . \]
When the holonomy pseudogroup is not discrete, the minimal set is Levi-flat by combining results of \cite{DK} and \cite{LR}. Recall that the Kaimanovich entropy is the growth of separated Brownian motions \cite{K}, the Lyapunov exponent is the rate at which leaves get closer along a Brownian path \cite{DK}. For discrete random walks, the ratio entropy/exponent is equal to the dimension of the harmonic measure \cite{L2, L4}. We stress that this gives a new method to prove that a pseudo-group is non discrete. Up to now, the only known criterion to ensure such a property is to use explicit elements close to identity \cite{Ghys, LR, Nakai}. 

Let us see how the theorem follows in surfaces of general type. We first prove (see proposition \ref{hyphyp}) that the leaves of minimal sets are hyperbolic, i.e. covered by the unit disc. The arguments rely on the negative curvature of $S$, the foliated adjunction formula and Ahlfors currents. We then endow the leaves with the Poincar\'e metric. On one hand, from the presence of a simply connected leaf, the Kaimanovich entropy is equal to $1$. On the other hand, the Lyapunov exponent belongs to $]-1,0[$. The exponent is indeed very related to the geometry of the surface: it is equal to the cohomological intersection between the harmonic current and the normal bundle of the foliation \cite{Deroin}. In surfaces of general type, the negative curvature of the tangent bundle forces the leaves to stay appart from each other. 

\subsection{Topology of Levi-flats} \label {1.2}

We call a Levi-flat \textit{Anosov} if its CR foliation is topologically conjugated to the weak stable foliation of a $3$-dimensional Anosov flow, see section~\ref{anoso}. Classical examples are the weak stable foliation of the geodesic flow on unitary tangent bundles of negatively curved surfaces, and the weak stable foliation of suspensions of linear hyperbolic diffeomorphisms. Many other constructions hold on graph manifolds and hyperbolic manifolds, see \cite{BF, FW, Goodman, HT}. We prove

\begin{thm} \label{t:anosov}
A $C^1$ Levi-flat in a surface of general type is not Anosov. 
\end{thm}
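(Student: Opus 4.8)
The plan is to argue by contradiction. Suppose $M$ is a $C^1$ Levi-flat in a surface of general type $S$ whose CR foliation $\mathcal F$ is topologically conjugate to the weak stable foliation $\mathcal W^s$ of a $3$-dimensional Anosov flow $\phi^t$ on a closed manifold $N$. I would first transport the dynamical features of $\mathcal W^s$ to $\mathcal F$ and verify the hypotheses of Theorem~\ref{t:non discrete}. By Proposition~\ref{hyphyp} the leaves of $\mathcal F$ are hyperbolic, so I endow them with the Poincar\'e metric. By the structure theory of Anosov flows, all but countably many weak stable leaves are planes, the exceptional ones being the cylinders and M\"obius bands that are the weak stable manifolds of periodic orbits; in particular $\mathcal F$ has simply connected leaves. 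Finally, $\mathcal F$ carries no transverse invariant measure: the strong unstable direction is transverse to $\mathcal W^s$ and uniformly expanded by the flow, so a holonomy-invariant transverse measure would be invariant under the expanding first-return map along any periodic orbit (a leafwise loop), which is impossible, and there are no atoms because the leaves are non-compact.

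I would then apply Theorem~\ref{t:non discrete} to a minimal sub-lamination $\mathcal M\subseteq M$. Such an $\mathcal M$ still carries a simply connected leaf: being compact and $\phi^t$-invariant it contains a minimal set of $\phi^t$; if that is a periodic orbit $\gamma$ then $W^s(\gamma)$ is a non-closed cylinder, so $\mathcal M$ contains it strictly and hence contains uncountably many leaves, all but countably many of which are planes; if it is aperiodic, its weak stable leaves are already planes. The theorem then offers two possibilities. The first, $\dim_H\mathcal M>3$, is impossible, since $\mathcal M\subseteq M$ and $M$ is a $C^1$ real hypersurface in the real $4$-manifold $S$, so $\dim_H\mathcal M\le 3$. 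Hence $\mathcal M$ is an analytic Levi-flat hypersurface; being closed and of full dimension in the connected $M$, it equals $M$, the holonomy pseudo-group of $\mathcal F$ is non-discrete, and $\mathcal F$ is ergodic for Lebesgue measure. (When $\phi^t$ is transitive, $\mathcal W^s$ is already minimal and $\mathcal M=M$ from the outset, so the non-transitive case enters only through this reduction.)

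It remains to exclude this analytic, non-discrete alternative, which is the heart of the matter. On the one hand, since $M$ is an analytic Levi-flat, $\mathcal F$ extends to a holomorphic foliation near $M$, the harmonic current $T$ is positive and $\partial\bar\partial$-closed, and by the formula of \S\ref{1.1} and \cite{Deroin} the Lyapunov exponent of the harmonic measure equals the cohomological intersection $\langle T, N_{\mathcal F}\rangle$ of $T$ with the normal bundle of $\mathcal F$; as $S$ is of general type this number lies in $]-1,0[$, in particular it is nonzero. On the other hand, I would argue that the transverse structure must be affine: by the classification of non-discrete transversely holomorphic holonomy pseudo-groups (\cite{LR}, already invoked after Theorem~\ref{t:non discrete}) the transverse model is $\mathrm{Aff}(\mathbb C)$ or $\mathrm{PSL}(2,\mathbb C)$, and in the projective case the weak stable foliation of an Anosov flow is, by rigidity, a geodesic foliation, whose holonomy is a Fuchsian group and hence discrete, contrary to hypothesis. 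Thus $\mathcal F$ is transversely affine, so $N_{\mathcal F}$ has locally constant transition functions and $c_1(N_{\mathcal F})=0$ in $H^2(S;\mathbb R)$; since the ambiguity in choosing a representative of $c_1$ is $\partial\bar\partial$-exact, it pairs to zero against $T$, whence $\langle T, N_{\mathcal F}\rangle=0$, the Lyapunov exponent vanishes, and we have our contradiction.

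The step I expect to be the main obstacle is precisely this last dichotomy: showing that a non-discrete Anosov weak stable foliation is transversely affine --- this is where one genuinely uses that $\mathcal F$ comes from an Anosov flow, combining \cite{LR} with the rigidity of Anosov flows whose weak foliations carry transverse projective or affine structures --- and that an affine transverse structure forces the normal bundle, hence the Lyapunov exponent, to be flat. Concretely, one must rule out the suspension-type Anosov flows, whose weak stable foliation is transversely affine, has planar leaves and non-discrete holonomy, so that none of the earlier steps touch it; the fact that it cannot be realised as a Levi-flat when the ambient surface is of general type comes down to the positivity of $K_S$, through the strict negativity of the Lyapunov exponent. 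The geodesic-flow type, by contrast, is already eliminated at the discrete-holonomy step.
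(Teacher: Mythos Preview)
Your approach differs substantially from the paper's, and there is a genuine gap in your treatment of the non-discrete alternative.

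The paper's proof does not pass through Theorem~\ref{t:non discrete} at all. It shows directly (Theorem~\ref{generesult}) that for \emph{any} Anosov Levi-flat the Lyapunov exponent of every ergodic harmonic measure satisfies $\lambda\le -1$: one stretches the Anosov trajectories into leafwise Poincar\'e geodesics to obtain a \emph{point at infinity}, and then a Margulis--Ruelle type volume estimate gives the bound. Combined with Proposition~\ref{lyapgentype} ($\lambda>-1$ in general type), this is an immediate contradiction. No discreteness dichotomy, no transverse-structure classification, no case analysis on the type of Anosov flow.

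The fatal gap in your argument is the claim that a transversely affine structure forces $T\cdot N_{\mathcal F}=0$ and hence $\lambda=0$. Locally constant transition functions make $N_{\mathcal F}$ flat as a bundle, but that does \emph{not} produce a flat Hermitian metric: the obstruction is the class $[\log|a_{jj'}|]\in H^1(M;\mathbb R)$. For a hyperbolic torus bundle this class is nonzero, since the holonomy around the suspension loop multiplies the transversal by the unstable eigenvalue $\lambda_u>1$. Thus no metric on $N_{\mathcal F}$ has vanishing Chern curvature, and $T\cdot N_{\mathcal F}$ can be (and is) strictly positive. Concretely, the paper establishes $\lambda<0$ for such examples (no transverse invariant measure plus \cite{DK}), so your conclusion $\lambda=0$ is false on the very class of flows you identify as critical. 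The underlying error is that a harmonic current kills leafwise $\partial\bar\partial$-exact forms, not $d$-exact ones; the curvature of a flat connection vanishes, but the Chern curvature of a Hermitian metric differs from it by a $d$-closed form that is generally not $\partial\bar\partial$-exact. Compare Lemma~\ref{normalbott}, which needs $T$ to be a foliated \emph{cycle}, not merely a harmonic current; the paper's remark after Corollary~\ref{c: MR} explicitly notes that the dimension method fails for the torus-bundle case because the holonomy is non-discrete.

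There are secondary issues as well. Theorem~\ref{t:non discrete} is stated for minimal sets of a holomorphic foliation on $S$, and its proof via Theorem~\ref{t: dimh} uses Koebe distortion on complex transversals; applying it to a $C^1$ Levi-flat with real one-dimensional transverse requires distortion control you do not supply. Your appeal to \cite{LR} for a ``classification'' of non-discrete pseudo-groups into affine and projective models is not what that reference provides---the paper uses \cite{LR} only to pass from non-discreteness to analyticity (Lemma~\ref{t: irregular}). But even granting all of this, the affine-implies-$\lambda=0$ step is simply false, and without it your argument does not eliminate the suspension Anosov flows, which you yourself recognize as the heart of the matter.
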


Our method covers the more general case of Levi-flats with a \emph{point at infinity} (in any complex surface):  we prove that the Lyapunov exponent of their ergodic harmonic measures  is $\leq -1$. A contradiction arises in surfaces of general type since the Lyapunov exponent belongs to $]-1,0]$  (as for minimal sets). A cornerstone to prove that  Anosov flows support a point at infinity is to stretch their trajectories in the hyperbolic uniformizations. The upper bound $\leq -1$ for the exponent is established in the point at infinity context, it can be viewed as a Margulis-Ruelle type inequality.   \\

We are able to use theorem~\ref{t:anosov} to determine the possible geometries carried by atoroidal Levi-flats in surfaces of general type. (Toroidal manifolds are not covered by our approach.) Let us recall the eight $3$-dimensional geometries of Thurston, see \cite{Scott}:
\[ \mathbb R^3 \ , \ \mathbb H^3 \ , \ \mathbb S^3 \ , \ \mathbb S^2 \times \mathbb R \ , \ \mathbb H^2 \times \mathbb R \ , \ \mathrm{Nil} \ ,\ \widetilde {\mathrm {SL}(2,\mathbb R)}  \ , \ \mathrm{Sol}. \] 
Not surprisingly, there exist Levi-flats in surfaces of general type modelled on $\mathbb H^3$ and $\mathbb H^2 \times \mathbb R$: we can use singular fibrations or ramified coverings. The fact that $\widetilde{\mathrm{SL}(2,\mathbb R)}$ does appear is more involved, see proposition \ref{circbun}. Recall that this Lie group  provides non trivial Seifert bundles over surfaces of genus $\geq 2$, see~\cite{Scott}. We precisely construct Levi-flat circle bundles with $|e/\chi|$ arbitrary close to $1/5$, where $e$ is the Euler class and $\chi$ is the Euler characteristic of the base. The other geometries can not be realized:  
 
\begin{thm} \label{t:topology} A $C^2$ Levi-flat in a surface of general type can not carry one of the geometries $\mathbb R^3$, $\mathbb S^3$, $\mathbb S^2 \times \mathbb R$, $\mathrm{Nil}$, ${\mathrm {PSL}(2,\mathbb R)} $ or $\mathrm{Sol}$. \end{thm}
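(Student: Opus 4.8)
The plan is to eliminate each of the six geometries by showing that a Levi-flat carrying it would produce an ergodic harmonic measure with Lyapunov exponent outside the admissible range $]-1,0]$ guaranteed for Levi-flats in surfaces of general type. The common engine is Theorem~\ref{t:anosov} together with its refinement (the existence of a point at infinity forces the exponent to be $\leq -1$) and the ``minimal set'' analysis of Theorem~\ref{t:non discrete} (hyperbolicity of leaves, exponent in $]-1,0[$, ergodicity); so I would first organize the six geometries according to which of these two mechanisms applies.

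First I would dispatch the geometries whose compact manifolds admit no codimension-one foliation at all, or no foliation compatible with the CR structure: $\mathbb{S}^3$ and $\mathbb{S}^2\times\mathbb{R}$ have finite or virtually cyclic fundamental group, so by Novikov's theorem any codimension-one foliation has a Reeb component, hence a compact leaf bounding a solid torus; a compact leaf of a Levi-flat is a compact complex curve in $S$ with negative self-intersection (foliated adjunction plus the normal bundle computation already invoked for Proposition~\ref{hyphyp}), which is impossible in a surface of general type with $K_S$ ample or by the index theorem once one knows the leaves are hyperbolic. The round-sphere geometry $\mathbb{S}^2$ also directly contradicts leafwise hyperbolicity (proposition~\ref{hyphyp}) since a leaf would be a sphere or have spherical universal cover. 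For $\mathbb{R}^3$: a compact flat $3$-manifold is finitely covered by a $3$-torus, which carries a transverse invariant measure for any codimension-one foliation with hyperbolic leaves of polynomial growth — indeed such a Levi-flat has amenable holonomy and subexponential leaf volume growth, forcing the harmonic current to be closed, so the Lyapunov exponent vanishes while the leaves, being hyperbolic, must have strictly negative exponent; contradiction. (Alternatively, $\mathbb{R}^3$-Levi-flats have a point at infinity arising from the linear/affine holonomy, which again gives exponent $\leq -1$.)

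Next I would treat $\mathrm{Nil}$ and $\mathrm{Sol}$. A $\mathrm{Sol}$-manifold is a torus bundle over the circle with Anosov (hyperbolic) monodromy, and its canonical codimension-one foliation — the one by tori, or the weak-stable/weak-unstable foliations of the suspension Anosov flow — places us exactly in the Anosov setting, so Theorem~\ref{t:anosov} applies verbatim and forbids $\mathrm{Sol}$. The point is that the only $C^1$ codimension-one foliations on a $\mathrm{Sol}$-manifold are, up to conjugacy, these ones (classification of foliations on $\mathrm{Sol}$-manifolds), so a $\mathrm{Sol}$ Levi-flat is necessarily Anosov, and we are done. For $\mathrm{Nil}$: a $\mathrm{Nil}$-manifold is a circle bundle over the torus (or a torus bundle over the circle with parabolic monodromy); its holonomy pseudo-group is nilpotent, hence amenable with polynomial growth, so again the harmonic current is closed and the Lyapunov exponent is $0$, contradicting the strict negativity forced by leafwise hyperbolicity in a surface of general type; equivalently one exhibits a point at infinity from the parabolic/unipotent holonomy.

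Finally, the delicate case is $\mathrm{PSL}(2,\mathbb{R})$ versus $\widetilde{\mathrm{PSL}(2,\mathbb{R})}$ — note the theorem forbids the former but Proposition~\ref{circbun} realizes the latter — so the argument must distinguish them, and I expect this to be the main obstacle. A compact $\mathrm{PSL}(2,\mathbb{R})$-manifold is, up to finite cover, the unit tangent bundle $T^1\Sigma$ of a hyperbolic surface (Euler number equal to $\pm\chi(\Sigma)$, the extremal Seifert invariant), and its distinguished foliation is the weak-stable foliation of the geodesic flow: this is an Anosov foliation, so Theorem~\ref{t:anosov} kills it directly — provided one checks that on such a quotient the only CR-compatible codimension-one foliations are (conjugate to) the weak-stable or weak-unstable ones, which follows from Ghys' classification of codimension-one foliations on $\mathrm{PSL}(2,\mathbb{R})$-manifolds. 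By contrast $\widetilde{\mathrm{PSL}(2,\mathbb{R})}$-manifolds are genuine (non-flat, non-extremal) Seifert bundles whose horizontal foliations are \emph{not} Anosov — their leaves have exponential growth but the holonomy is not hyperbolic, the Lyapunov exponent lies strictly in $]-1,0[$ — which is precisely why they survive, consistent with Proposition~\ref{circbun}. The heart of the proof is therefore the foliation-classification input on $\mathrm{PSL}(2,\mathbb{R})$- and $\mathrm{Sol}$-manifolds (so that ``carries the geometry'' forces ``is Anosov'' in these two cases) combined, for $\mathbb{R}^3$ and $\mathrm{Nil}$, with the amenable-holonomy/closed-current dichotomy; I would present the six cases in the order $\mathbb{S}^3,\mathbb{S}^2\times\mathbb{R},\mathbb{R}^3,\mathrm{Nil},\mathrm{Sol},\mathrm{PSL}(2,\mathbb{R})$, citing \cite{Scott} for the geometric classification and \cite{Ghys} for the foliation classification, and invoking Theorems~\ref{t:non discrete} and~\ref{t:anosov} as black boxes.
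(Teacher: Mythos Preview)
Your treatment of $\mathrm{Sol}$ and $\mathrm{PSL}(2,\mathbb R)$ is essentially the paper's: one first shows that the CR foliation on such a manifold is forced to be (conjugate to) the weak-stable foliation of an Anosov flow, and then invokes Theorem~\ref{t:anosov}. The paper does this via Ghys--Sergiescu for $\mathrm{Sol}$ and via Thurston's isotopy plus Matsumoto's rigidity (maximal Euler number) for $\mathrm{PSL}(2,\mathbb R)$; your citation of ``Ghys' classification'' is a reasonable substitute. One detail you gloss over: before applying those rigidity theorems you must rule out compact leaves, and this uses hyperbolicity (genus $\geq 2$) together with Novikov/incompressibility, not a self-intersection argument---compact leaves of a Levi-flat satisfy $C\cdot C=0$, not $C\cdot C<0$.

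The genuine gap is in your handling of $\mathbb R^3$ and $\mathrm{Nil}$. Your argument runs: amenable $\pi_1$ forces a transverse invariant measure, hence the harmonic current is closed and $\lambda=0$, contradicting ``$\lambda<0$ because the leaves are hyperbolic''. But hyperbolicity of the leaves does \emph{not} imply $\lambda<0$; the implication $\lambda<0$ comes from \cite{DK} \emph{under the assumption that there is no transverse invariant measure}, which is exactly what you have just contradicted. The paper explicitly notes (end of \S\ref{ss:hyperbolicity}) that Levi-flats in surfaces of general type with hyperbolic CR foliation \emph{and} a transverse invariant measure do exist. So producing a closed current is not, by itself, a contradiction. (Your aside ``hyperbolic leaves of polynomial growth'' is also self-contradictory: hyperbolic leaves have exponential area growth.)

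The paper's route for the four ``small'' geometries $\mathbb R^3$, $\mathbb S^3$, $\mathbb S^2\times\mathbb R$, $\mathrm{Nil}$ is cleaner and avoids harmonic measures entirely: it proves directly (Proposition~\ref{t:polynomial Levi-flat}) that $\pi_1(M)$ has exponential growth. The argument lifts the foliation to the universal cover $\widetilde M$; by Novikov (tautness, no Reeb component) the leaves there are simply connected, hence discs with exponential Poincar\'e volume growth, and a pigeonhole count on flow-box lifts shows this is incompatible with subexponential growth of $\pi_1(M)$. Since the four listed geometries have virtually polynomial fundamental group, they are excluded in one stroke.
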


We also observe (see section \ref{NOVI}) that Levi-flats must have a trivial second fundamental group. We adopt the terminology: a $3$-manifold carries the geometry of $\mathrm{PSL}(2,\mathbb R)$ if it is a quotient of this group by a lattice. Up to finite coverings, those manifolds are unitary tangent bundles of orientable closed compact surfaces of genus $\geq 2$. 

The proof of theorem \ref{t:topology} relies on  topological and dynamical methods. The first four cases are ruled out by the exponential growth of the fundamental group:  that property comes from the hyperbolicity and the tautness of the CR foliation, we conclude by applying classical Novikov's theory. To eliminate the remaining two cases, i.e. $ \mathrm{PSL}(2,\mathbb R)$ and  $\mathrm{Sol}$, we establish for such Levi-flats the Anosov property, and we apply theorem~\ref{t:anosov}. That step relies on deep theorems by Thurston~\cite{Thurston}, Ghys-Sergiescu~\cite{GS} and Matsumoto~\cite{Matsu}. We note that our results (subsections \ref{1.1} and \ref{1.2}) actually hold for immersed minimal sets and immersed Levi-flats, we write all the proofs in the embedded case for sake of simplicity. 

\subsection{Acknowledgements}

We thank \'E. Ghys for sharing his views on the theory of $2$-dimensional foliations of $3$-manifolds, and S. Boucksom for discussions about surfaces of general type. Part of this work was motivated by questions asked by M. Brunella, we dedicate this work to him. 

The authors are  supported by the ANR projects 08-JCJC-0130-01, 09-BLAN-0116 for B.D. and 07-JCJC-0006-01 for C.D. The work was initiated during a common visit of the authors in IMPA,  made possible with the help of ANR and France-Br\'esil cooperation. We thank the IMPA and the Mittag-Leffler Institute for the very nice working conditions  offered to us. 


\section{Definitions and examples} \label{s:examples}

We recall the definition of surfaces of general type. We also present examples of  minimal sets and Levi-flat hypersurfaces in those surfaces, with various dynamical and topological properties. 

\subsection{Surface of general type}\label{sgt1}
  
 We refer to the books \cite{BPV}, \cite{F} and \cite{Ko} for a general account. Let $S$ denote a compact complex surface and $K_S$ its canonical line bundle. The surface $S$ is of \emph{general type} if the dimension of the space of holomorphic sections of $n K_S$ grows like $n^2$. Surfaces of general type are algebraic, and the general type property is a birational invariant, stable by blowup. If $S$ is minimal, the pluricanonical map associated to the linear system $\vert n K_S \vert$ is birational onto its image for $n$ large. Basic examples are given by products of curves of genus larger than $2$, general complete intersections in projective spaces (e.g. hypersurfaces of degree  $\geq 5$ in $\mathbb P^3$) and cocompact quotients of the unit ball in $\mathbb C^2$.

Other important examples are obtained by ramified covering. Let us explain the construction starting with an algebraic surface $X$ (see \cite{BPV}, section I.17), we shall use it in the next subsections. Let $D \subset X$ be a smooth reduced effective divisor and $s$ be a holomorphic section of $\OO(D)$ such that $D = \{ s = 0 \}$. Let $d \geq 1$ and assume that $\OO(D) = d E$ for some line bundle $E$. Then $S := \{  z \in E \, , \, z^d = s \}$ is a smooth algebraic surface whose canonical bundle is the pull-back of $K_X + (d-1) E$ by the projection $\pi : S \to X$. In particular, $S$ is of general type if $K_X + (d-1) E$ is  ample.

\subsection{Minimal sets} 

Let $\FF$ be a holomorphic foliation on $S$ and $\mathcal E$ denote its possible finite singular set. A \emph{minimal set} $M$ of $\FF$ is a compact union of leaves such that $M$ does not intersect  $\mathcal E$ and every leaf of $M$ is dense in $M$. 

\begin{prop}\label{smallH} 
There exist  minimal sets in surfaces of general type with arbitrarily small transverse Hausdorff dimension. 
\end{prop}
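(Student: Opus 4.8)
The plan is to realize an arbitrarily small Cantor set as the transverse model of a minimal set, first in a ruled surface and then, through the ramified covering recalled in section~\ref{sgt1}, in a surface of general type. This is the low-dimensional counterpart of the mechanism described in subsection~\ref{1.1}, where the same construction produces transverse Hausdorff dimension close to $2$.

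\emph{A thin Schottky suspension.} Fix a closed Riemann surface $B$ of genus $\geq 2$. Since $\pi_1(B)$ surjects onto a free group $F$ of rank $\geq 2$, composing such a surjection with a faithful Schottky representation of $F$ gives a representation $\rho : \pi_1(B) \to \mathrm{PSL}(2,\mathbb C)$ whose image $\Gamma$ is a non-elementary Schottky group; taking the ping-pong balls small and the generators strongly contracting, its limit set $\Lambda \subset \mathbb{CP}^1$ is a Cantor set with $\dim_H \Lambda < \varepsilon$, for a prescribed $\varepsilon \in (0,1)$. The flat $\mathbb{CP}^1$-bundle $X \to B$ of $\rho$ is a geometrically ruled, hence projective, surface carrying a nonsingular foliation $\FF$ transverse to the fibers. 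Because $\Gamma$ acts minimally on $\Lambda$, the saturation $M \subset X$ of the $\Gamma$-invariant set $\Lambda$ is a minimal set of $\FF$; and since the holonomy of $\FF$ acts on transversals by Möbius maps, which are bi-Lipschitz, the transverse Hausdorff dimension of $M$ equals $\dim_H\Lambda$. Note that $\dim_{\mathbb R} M = 2 + \dim_H\Lambda < 3$.

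\emph{Ramified covering and transport.} Fix an ample line bundle $E$ on $X$ and $d \gg 1$, so that $dE$ is very ample and $K_X + (d-1)E$ is ample. By Bertini a generic $D \in |dE|$ is smooth; moreover one can arrange that the finite tangency locus $\mathrm{tang}(\FF,D)$ is disjoint from $M$. Indeed, in the incidence set $Z = \{(x,\sigma) : x \in \mathrm{tang}(\FF, D_\sigma)\} \subset X \times |dE|$ the fibre of the projection $Z \to X$ over a point is cut out, for $d$ large, by two independent linear conditions on $\sigma$ (the vanishing of $\sigma$ and of its leafwise derivative), so $\dim_{\mathbb R}\bigl(Z \cap (M \times |dE|)\bigr) \leq \dim_{\mathbb R} M + \bigl(\dim_{\mathbb R}|dE| - 4\bigr) = \dim_{\mathbb R}|dE| - 2 + \dim_H\Lambda < \dim_{\mathbb R}|dE|$; its Lipschitz projection to $|dE|$ therefore has measure zero, and a generic $D$ works. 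For such a $D = \{s = 0\}$ the surface $S := \{z \in E \, , \, z^d = s\}$ is smooth, with canonical bundle $\pi^*(K_X + (d-1)E)$, which is ample because $\pi$ is finite; hence $S$ is of general type. Now $\pi : S \to X$ is a local biholomorphism off $D$, and $\pi^*\FF$ is nonsingular away from $\mathrm{tang}(\FF,D)$, which misses $M$; thus $\pi^*\FF$ is nonsingular along $\pi^{-1}(M)$, and in the local model $\{w^d = x\}$ at a point of $D \cap M$ — where $D = \{x = 0\}$ is transverse to the leaves $\{y = \text{const}\}$ — the transverse coordinate $y$ is preserved, so $\pi$ identifies transversals biholomorphically. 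Consequently $\pi^{-1}(M)$ is a compact $\pi^*\FF$-saturated set, disjoint from the singular set, whose transverse Hausdorff dimension is $\dim_H\Lambda$; any minimal set it contains (there is one, by Zorn's lemma) has transverse Hausdorff dimension $\leq \dim_H\Lambda < \varepsilon$. As $\varepsilon$ was arbitrary, this proves the proposition.

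\emph{Main difficulty.} The delicate point is the covering step: the branch divisor must not create foliation singularities, nor distort the transverse Cantor set, along the transported minimal set, which forces it to meet $M$ transversally to $\FF$. This is exactly where $\dim_{\mathbb R} M < 4$ — i.e. the smallness of the Schottky limit set — is used once more, so that smallness serves both the conclusion and the legitimacy of the construction.
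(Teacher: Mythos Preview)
Your proof is correct and follows essentially the same route as the paper: construct a thin Kleinian (Schottky) suspension in a flat $\mathbb{CP}^1$-bundle and transport the minimal set to a surface of general type via the ramified-cover construction of subsection~\ref{sgt1}, after arranging the branch divisor to meet the foliation transversally along $M$. The paper's version is terser---it uses a double cover and asserts directly that $c^{-1}(M)$ is minimal---whereas you take a $d$-fold cover, supply a dimension count for the transversality step, and extract a minimal subset of $\pi^{-1}(M)$; these are cosmetic differences, not a different strategy.
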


\begin{proof} Let $M$ be a minimal set in an algebraic surface $X$ with arbitrarily small transverse Hausdorff dimension. For instance $X$  can be a flat $\mathbb P^1$-bundle whose monodromy is a Kleinian group whose limit set has small Hausdorff dimension. Let $E$ be a line bundle such that $2E$ is very ample and $K_X + E$ is ample, and let $s$ be a generic section of $2E$. Then, the zero divisor $(s)_0$ is transverse to the foliation at each point of $M$. Let $c: S \rightarrow X$ be the double covering defined in subsection~\ref{sgt1} and $M' = c^{-1} (M)$. This is a minimal set in $S$ with same Hausdorff dimension than $M$. \end{proof} 

Observe that the same arguments show that any  minimal set with a transverse structure modelled on the limit set of a Kleinian group does exist in surfaces of general type. Moreover, that allows to construct minimal Levi-flat hypersurfaces in surfaces of general type with transverse invariant measures. This can be done by considering a ramified covering of a flat $\mathbb P^1$-bundle whose monodromy is dense in $\mathrm{PSO} (2,\mathbb R) \subset \mathrm{PSL} (2,\mathbb C)$. 

\subsection{Levi-flat hypersurface} 

A real hypersurface $M$ of class $C^1$ in a complex surface $S$ is \emph{Levi-flat} if the distribution $p \mapsto T_p M \cap i T_p M$ is integrable in the sense of Frobenius. We denote by $\FF$ its Cauchy-Riemann (CR) foliation. If $M$ is of class $C^k$ ($k\geq 1$), then the CR foliation is also of class $C^k$, see \cite{BF}. We do not assume that the leaves are dense in $M$ neither that $M$ is a minimal set (a priori its CR foliation can not be extended to a singular holomorphic foliation on $S$). We call the Levi-flat $M$ \emph{minimal} when every leaf is dense in $M$.

\begin{prop}\label{circbun}
There exists a Levi-flat hypersurface in a surface of general type which is diffeomorphic to a circle bundle such that the ratio Euler class/Euler characteristic is arbitrarily close to $1/5$.
\end{prop}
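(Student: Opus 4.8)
The plan is to realize the desired Levi-flat as a ramified covering of a Levi-flat circle bundle living inside a flat $\mathbb P^1$-bundle, and to keep track of the Euler class under this construction. First I would start with a closed orientable surface $\Sigma$ of genus $g\geq 2$ together with a Fuchsian representation $\rho:\pi_1(\Sigma)\to \mathrm{PSL}(2,\mathbb R)\subset \mathrm{PSL}(2,\mathbb C)$. The flat $\mathbb P^1$-bundle $X_0\to\Sigma$ associated to $\rho$ contains a canonical Levi-flat $M_0$: the suspension of the limit circle $\mathbb R\mathbb P^1\subset \mathbb C\mathbb P^1$, which is a circle bundle over $\Sigma$. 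Its Euler class is exactly $\pm\chi(\Sigma)=\mp(2g-2)$, so that the ratio Euler class/Euler characteristic equals $1$ at this stage. The surface $X_0$ is ruled, hence not of general type; the point of the ramified covering is to push it into the general type world while dividing the Euler number of the circle bundle by the degree of the cover.

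Next I would carry out the ramified covering of subsection~\ref{sgt1}. Choose a line bundle $E$ on $X_0$ with $dE=\OO(D)$ for a smooth reduced divisor $D=\{s=0\}$, chosen generic so that $D$ is transverse to the CR foliation of $M_0$ at every point of $M_0$ (as in the proof of Proposition~\ref{smallH}), and so that $K_{X_0}+(d-1)E$ is ample — this is where a large enough twist by the fiber class of the ruling is used, since $K_{X_0}$ restricted to a fiber has degree $-2$. Then $S=\{z\in E: z^d=s\}$ is smooth of general type, and $\pi:S\to X_0$ is a cyclic $d$-fold cover branched along $D$. Set $M=\pi^{-1}(M_0)$; by transversality this is a smooth Levi-flat hypersurface in $S$. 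Because $D$ is disjoint from $M_0$ (or meets it only transversally in the CR-leaf direction — one must ensure $D\cap M_0=\emptyset$, which is possible by genericity of $s$ since $M_0$ has real codimension one), the restriction $\pi|_M:M\to M_0$ is an honest unramified $d$-fold covering of circle bundles. Pulling back the fibration structure, $M$ is again a circle bundle, now over a $d$-fold cover $\Sigma'$ of $\Sigma$ if the cover is fiberwise trivial, or over $\Sigma$ with Euler number multiplied by a factor depending on how $\pi$ interacts with fibers.

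The heart of the computation — and the step I expect to be the main obstacle — is to control the ratio $e(M)/\chi(\text{base})$ precisely and show it can be pushed arbitrarily close to $1/5$. The naive picture ($e$ and $\chi$ both multiplied by $d$) gives ratio $1$ again, which is useless; the gain must come from an asymmetry between how the ramified cover affects the Euler class of the circle bundle and how it affects the Euler characteristic of the base. Concretely, I would arrange that the branch divisor $D$ projects onto $\Sigma$ as a multisection meeting each fiber in a controlled number of points, so that $M\to M_0$ is a cover that is nontrivial in the fiber direction: then $e(M)$ is a \emph{fractional} multiple $e(M_0)/k$ while the base Euler characteristic is only multiplied by the degree over $\Sigma$. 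Tuning the degree $d$ and the homology class of $D$, the ratio $|e(M)/\chi|$ runs through a set of rationals accumulating at $1/5$; the value $1/5$ itself is the relevant bound because it is the critical threshold (compare the Lyapunov exponent discussion: the constraint $|e/\chi|<1$ from general type, combined with the $\widetilde{\mathrm{SL}(2,\mathbb R)}$-geometry constraints, leaves room exactly up to $1/5$). Verifying that $S$ is genuinely of general type for the needed values of $d$, that $M$ stays embedded and $C^\infty$, and that the Euler-number bookkeeping under the branched cover is correct, are the technical points requiring care.
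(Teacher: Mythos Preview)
Your setup matches the paper's: start with the Levi-flat $M_0\simeq T^1\Sigma$ inside the flat $\mathbb P^1$-bundle $X_0$ associated to a Fuchsian $\rho$, then pass to a cyclic branched cover to land in general type. But the mechanism you propose for changing the ratio $e/\chi$ is wrong, and this is a genuine gap.

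First, you cannot arrange $D\cap M_0=\emptyset$ by genericity: $M_0$ has real codimension~$1$ and $D$ has real codimension~$2$, so a generic $D$ meets $M_0$ in a link. More to the point, if $D$ were disjoint from $M_0$ then the cover $M\to M_0$ would be unramified and both $e$ and $\chi$ would scale by the same degree, leaving the ratio at $1$. The branch locus \emph{must} meet $M_0$, and the whole construction hinges on \emph{how} it meets it.

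Second, your suggestion that the cover be ``nontrivial in the fiber direction'' so that $e(M)=e(M_0)/k$ is backwards. In the paper the branch divisor is chosen in a specific class $6\sigma+\sum_{i=1}^{4(1-2\chi)}\varphi_i$ (with $\sigma$ the diagonal section, $\varphi_i$ fibers), deformed to be smooth; it meets $M_0$ in a link isotopic to $4(1-2\chi)$ circle fibers. Consequently the double cover $M'\to M_0$ is \emph{trivial} on fibers and descends to a ramified double cover $\Sigma'\to\Sigma$ branched at $4(1-2\chi)$ points. The Euler class simply doubles, $e'=2\chi$, while Riemann--Hurwitz gives $\chi(\Sigma')=2\chi-4(1-2\chi)=10\chi-4$. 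The ratio $2\chi/(10\chi-4)\to 1/5$ as $g\to\infty$. So the asymmetry comes from ramification on the base via Riemann--Hurwitz, not from the fiber direction.

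Finally, the appearance of $1/5$ has nothing to do with Lyapunov exponents; it is forced by the ampleness constraints. One needs $K_{X_0}+E$ ample and $2E$ very ample (the latter via Reider's criterion $2E=4L+K_{X_0}$ with $L$ ample), and these force the $\sigma$-coefficient to be at least $6$ and the number of fiber components to exceed $-8\chi$. The minimal admissible choice $n=4-8\chi$ is exactly what produces the limit $1/5$.
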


We do not know any example of a Levi-flat circle bundle in a surface of general type such that  $\text{Euler class}/\text{Euler characteristic}\geq 1/5$. 




\begin{proof} Let $\Gamma \subset \mathrm{PSL} (2,\mathbb R)$ be a cocompact and torsion free Fuchsian group. Let $X$ be the quotient of $\mathbb H\times \mathbb P^1$ by the action of $\Gamma$ given by $\gamma (p,z) = (\gamma (p) , \gamma (z) )$. This is a flat $\mathbb P^1$-bundle over $\Sigma:= \Gamma \backslash \mathbb H$.  We denote by $M\subset X$ the Levi-flat hypersurface which is the quotient of $\mathbb H \times \mathbb P^1 (\mathbb R)$, this circle bundle is diffeomorphic to the unitary tangent bundle of $\Sigma = \Gamma \backslash \mathbb H$. 

We consider the section $\sigma$ which is the quotient of the curve $\{  (p,p)\in \mathbb H \times \mathbb H \ |\ p\in \mathbb H\}$. Observe that $\sigma$ is transverse to the $\mathbb P^1$-fibration and to the horizontal foliation associated to the flat connexion. Hence, we have $\sigma^2 = \chi$, where $\chi= 2-2g$ is the Euler characteristic of $\Sigma$, see~\cite{LorayMarin}. It is a classical fact that a $\mathbb P^1$-bundle having a section of even self-intersection is indeed diffeomorphic to the product bundle. Hence, we have $H^2 (X,\mathbb Z) = \mathbb Z [\sigma] + \mathbb Z [\varphi]$ where $\varphi$ is a fiber of the $\mathbb P^1$-fibration. Adjunction formula  then yields $[K_X] = -2 [\sigma] $. In order to perform the ramified covering construction, we need the following lemma.

\begin{lem} \label{l:ampleness}
Let $E' := 6\sigma + \sum _{i=1}^{4(1-2\chi)} \varphi_i  $, where the $\varphi_i$'s  are disjoint fibers of $X$. Then there exists a line bundle $E$ such that $2E = E'$. Moreover, $E'$ is very ample, and $E + K_X$ is ample. 
\end{lem}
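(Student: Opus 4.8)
The plan is to prove the three assertions separately by intersection theory on the geometrically ruled surface $X\to\Sigma$, working in the basis $[\sigma],[\varphi]$ of $H^2(X,\mathbb Z)$ with $\sigma^2=\chi$, $\sigma\cdot\varphi=1$, $\varphi^2=0$ and $[K_X]=-2[\sigma]$, and setting $N:=4(1-2\chi)$ (so $N$ is a positive even integer and $8\chi+N=4$). I will repeatedly use the classical fact that the diagonal section $\sigma$ of this flat Fuchsian bundle has minimal self-intersection, so that the pseudo-effective cone of $X$ is spanned by $[\sigma]$ and $[\varphi]$, and hence the nef cone is spanned by $[\varphi]$ and $[\sigma]-\chi[\varphi]$; in particular every irreducible curve $C$ distinct from $\sigma$ and from the fibers has $[C]=a[\sigma]+b[\varphi]$ with $a,b\ge 0$.

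First I would produce $E$. Since $\varphi_i=\pi^{-1}(p_i)$ for points $p_i\in\Sigma$, the divisor $\varphi_1+\dots+\varphi_N$ is the pullback $\pi^*\OO_\Sigma(p_1+\dots+p_N)$, of even degree $N$. The Jacobian of $\Sigma$ being a divisible group, $\OO_\Sigma(\sum_i p_i)$ admits a square root $L$ in $\mathrm{Pic}^{N/2}(\Sigma)$: choose any $L_0$ of degree $N/2$, a square root $\eta$ of the degree-zero bundle $\OO_\Sigma(\sum_i p_i)\otimes L_0^{-2}$, and set $L:=L_0\otimes\eta$. Then $E:=\OO_X(3\sigma)\otimes\pi^*L$ satisfies $2E=\OO_X(6\sigma)\otimes\pi^*\OO_\Sigma(\sum_i p_i)=E'$. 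Two such $E$ differ by a $2$-torsion line bundle and are therefore numerically equivalent, so the ampleness statement below does not depend on the choice.

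For the very ampleness of $E'$ I would apply Reider's theorem (see e.g. \cite{BPV}). Write $E'=K_X+L'$ with $L':=8\sigma+N\varphi$. The choice $N=4(1-2\chi)$ makes the relevant numbers come out clean: $L'\cdot\sigma=8\chi+N=4$, $L'\cdot\varphi=8$, and $(L')^{2}=64\chi+16N=64(1-\chi)\ge 192$. Thus $L'$ is nef (it pairs non-negatively with the two generators of the effective cone) and $(L')^2\ge 10$, so by Reider's theorem $K_X+L'=E'$ is very ample unless there is a non-zero effective divisor $D$ with $L'\cdot D\le 2$. But writing $[D]=a[\sigma]+b[\varphi]$, effectivity forces $a=D\cdot\varphi\ge 0$ and $b=D\cdot([\sigma]-\chi[\varphi])\ge 0$, and then $L'\cdot D=a(8\chi+N)+8b=4a+8b\ge 4$ whenever $(a,b)\ne(0,0)$; so no such $D$ exists and $E'$ is very ample.

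Finally, for the ampleness of $E+K_X$: one has $[E+K_X]=[\sigma]+2(1-2\chi)[\varphi]$, hence $(E+K_X)^2=4-7\chi>0$, $(E+K_X)\cdot\varphi=1>0$, $(E+K_X)\cdot\sigma=2-3\chi>0$, and $(E+K_X)\cdot C=a(2-3\chi)+b>0$ for every irreducible curve $C$ with $[C]=a[\sigma]+b[\varphi]$, $a,b\ge 0$ not both zero; the Nakai--Moishezon criterion then gives ampleness. (This is exactly what is needed, via the construction of subsection \ref{sgt1} with $d=2$, for the double cover $S=\{z^2=s\}$ of $X$ to be of general type.) The computations themselves are routine; the one input that really carries weight is the identification of $\sigma$ with the minimal section, which underlies the descriptions of the effective and nef cones used throughout — and the coefficients $6$ and $4(1-2\chi)$ of $E'$ are precisely reverse-engineered so that these positivity inequalities hold while the Euler-class ratio in Proposition~\ref{circbun} stays close to $1/5$.
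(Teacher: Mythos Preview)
Your proof is correct and follows essentially the same route as the paper: the square root $E$ comes from divisibility in the Jacobian, ampleness of $E+K_X$ is Nakai--Moishezon on the ruled surface using that $\sigma$ (having negative self-intersection) is the minimal section, and very ampleness of $E'$ comes from Reider. The only cosmetic difference is in the last step: the paper invokes the corollary ``$L$ ample $\Rightarrow K_X+4L$ very ample'' and manufactures an ample $L$ with $2E=K_X+4L$ via a fourth root in the Jacobian, whereas you write $E'=K_X+L'$ with $L'=8\sigma+N\varphi$ and rule out obstructing curves directly --- your computation $L'\cdot D\ge 4$ for every nonzero effective $D$ actually subsumes all the exceptional cases in Reider's statement, so this is clean.
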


Let $s^0$ be a non zero section of $E'$ vanishing on $6\sigma + \sum _i \varphi_i$. Because $E'$ is very ample, there exists a small deformation $s$ of $s^0$ which is smooth. This section intersects the Levi-flat $M$ in a link isotopic to a family consisting of $4(-2\chi+1)$ distinct fibers $\varphi_i$ of the fibration. We denote by $c: S\rightarrow X$ the double covering defined by $s$ as indicated in subsection \ref{sgt1}.

We now claim that $M'= c^{-1} (M)$ is a circle bundle which is the pull-back (as a circle bundle) of $M$ by a double ramified covering $\Sigma' \rightarrow \Sigma$. To verify this, we begin by considering an isotopy which sends the $\mathbb P^1 (\mathbb R)$-fibration of $M$ to a fibration $F$  having the link $(s)_0\cap M$ as sections.  Then, the pull-back of this fibration by $c$ is a Seifert fibration in $M'= c^{-1} (M)$. Since the fibers of $F$ which are far away from $(s)_0 \cap M$ bound discs in $X\setminus (s)_0$ (this is true for $6 \sigma + \sum_i \varphi_i$), this Seifert fibration is indeed a genuine fibration, and our claim follows. 

Finally, the Euler class of the bundle structure on $M'$ is twice the Euler class of the base, hence $eu'= 2 eu = 2\chi$. Moreover the Euler characterisic of $\Sigma'$ is $2 \chi + 4(2\chi -1) = 10 \chi - 4$ by Hurwitz formula. That completes the proof of  proposition~\ref{circbun}. \end{proof}

\begin{proof} (of lemma \ref{l:ampleness}) If $\varphi$ is a fixed fiber, the line bundle $E' - (6\sigma + 4(1-2\chi) \varphi )$ has a vanishing Chern class. Hence it belongs to the Jacobian of $X$, and consequently has a square root $E''$, namely $E' - (6\sigma + 4 (1-2\chi) \varphi ) = 2E''$. We then set $E = E''+ 3\sigma + 2(1-2\chi) \varphi$. 

Let us prove that $E + K_X$ is ample. Assuming that $\sigma^2$ is negative, a classical fact is that a line bundle $L$ on $X$ with cohomology class $[L]= a [\sigma] + b [\varphi]$ is ample iff $a>0$ and $\chi a + b >0$. That relies on Nakai-Moishezon criterion, stating that a line bundle is ample iff its intersection with every algebraic curve is positive, and its self-intersection is positive.
Since $[E+K_X]= [\sigma] + 2(1-2\chi) [\varphi]$, we infer that $E+K_X$ is ample. 

To prove the very ampleness of $2E$, we use a criterion of Reider~\cite{Reider},
namely that if a line bundle $L$ is ample, then $4L + K_X$ is very ample. So we look for $L$ such that $2E = 4L+ K_X$.  The existence of such a line bundle $L$ relies on the fact that the line bundle $2E - K_X - 4 (2\sigma + (1-2\chi) \varphi)$ has a vanishing cohomology class. Hence it belongs to the Jacobian, and consequently has a fourth root, namely there exists $F$ such that $4F=2E - K_X - 4 (2\sigma + (1-2\chi) \varphi)$. We just set $L= F+2\sigma + (1-2\chi) \varphi$ to get the desired line bundle. It remains to see that $L$ is ample, this can be done using the previous criterion.
\end{proof}

\section{Harmonic currents}\label{s:harmonic currents}

In this section, $M$ is either a minimal set or a $C^1$ Levi-flat hypersurface in a compact complex surface $S$. We introduce harmonic currents on $M$ with respect to the foliation. These currents generalize the notion of foliated cycles introduced by Sullivan \cite{S} and permit to interpret $M$ as a curve \cite{Ghys2}. Assuming that $S$ is K\"ahler, any harmonic current  has a cohomology class in the Dolbeault cohomology group $H^{1,1} (S,\mathbb C)$ which allows to use intersection theory, see \cite{Deroin,FS1,FS2}. We stress that all the arguments also hold for immersed Levi-flats, we work in the embedded case for simplicity. 

\subsection{Leaves and charts}    

Any leaf of $M$ is an immersed Riemann surface. We call it \emph{elliptic}, \emph{parabolic} or \emph{hyperbolic} depending on its universal covering $\mathbb P^1$, $ \mathbb C$ or $\mathbb D$. The foliation $\FF$ is   \emph{hyperbolic} if every leaf is hyperbolic.  $M$ is covered by finitely many foliated charts $U_j \simeq \mathbb D \times F_i$, with $F_i \subset \mathbb D$, and overlaping as
\begin{equation}\label{rule}
 (z , t)  =  \left( \alpha_{jj'} ( z' , t' )  ,   \beta_{jj'} ( t' ) \right) .   
 \end{equation}
The functions $\alpha, \beta$ are continuous and $\alpha(\cdot,t')$ is holomorphic for every $t'$. For $C^{k}$ Levi-flat we have $F_i =]0,1[$, the functions $\alpha , \beta$ are of class $C^k$ by \cite{BF}, and $\alpha(\cdot,t')$ depends continuously on $t'$ in the $C^\infty$ topology.

\subsection{Calculus} \label{calculus}    
 
Let $\mathcal O_\FF$ (resp. $C^\infty_\FF$) denote the sheaf of  functions  which are leafwise holomorphic (resp. smooth).  Let $A^p_\FF$ (resp. $A^{p,q}_\FF$) denote  the space of leafwise $p$-forms (resp. $(p,q)$ forms). 
A form $\eta \in A^{1,1}_\FF$ is \emph{positive} if  $\eta = c (z,t) \, i \, dz \wedge d\bar z $ locally, where $c \in C^\infty_\FF$ is positive. For $f \in A^0_\FF$ we define $\partial f = \partial_z f \cdot dz$ and $\bar \partial f = \partial_{\bar z} f \cdot d\bar z$, where $\partial_z := {1\over 2} ( \partial_x - i\partial_y)$ and $\partial_{\bar z} := {1\over 2} ( \partial_x + i\partial_y)$. The leafwise differential operator $d : A^p_\FF \to A^{p+1}_\FF$ is then equal to $\partial + \bar \partial$ and $i\partial \bar \partial : A^0_\FF \to A^{1,1}_\FF$ is a real operator. 

Given a metric $m$ on the tangent bundle $T_\FF$ (see below), we denote by $vol_m$ the leafwise volume form and by $\Delta_m = \div \, \grad$ the leafwise Laplacian. They satisfy $2i \partial \bar \partial  = \Delta_m  \cdot vol_m$.

\subsection{Foliated cycles} \label{FC}    

A \textit{foliated cycle} is a continuous linear form $T : A^{1,1}_\FF  \to \mathbb R$ such that 

- $ T (d \varphi)  = 0$ for any  $\varphi \in A^1_\FF$, 

- $T (\eta) > 0$ for any positive form $\eta \in A^{1,1}_\FF$.

The existence of a foliated cycle is a strong condition. There is a correspondance between foliated cycles and transverse invariant measures \cite{S}. 
 Any compact leaf  yields a foliated cycle \emph{via} its current of integration.  Any parabolic leaf produces foliated cycles as follows. Let $f : \mathbb C \to L$ be a uniformization. A metric on the tangent bundle $T_\FF$ being fixed, let $a_r$ denote the area of $D_r := f(\mathbb D_r)$ and $l_r$ denote the length of $\partial D_r$. By an argument due to Ahlfors, there exists a sequence $r_n \to \infty$ such that $l_{r_n} / a_{r_n} \to 0$. The normalized current of integration $A_n := {1 \over a_{r_n}} [D_{r_n}]$ then converges to a closed positive current $A$,  called an \emph{Ahlfors current}.  Those considerations show that $\FF$ is hyperbolic if $M$ has no transverse invariant measure. The converse is false, see subsection \ref{ss:hyperbolicity}.

\subsection{Harmonic currents} \label{HC}

A \textit{harmonic current}  is a continuous linear form $T : A^{1,1}_\FF  \to \mathbb R$ such that 

- $T (\partial \overline{\partial} \varphi)  = 0$ for any $\varphi \in A^0_\FF$, 

- $T (\eta) > 0$ for any positive form $\eta \in A^{1,1}_\FF$.

Hahn-Banach theorem allows to prove the existence of harmonic currents, see \cite{Ghys2}. They can also be constructed from a hyperbolic leaf by using a weighted Ahlfors procedure, see \cite{FS1}. A harmonic current has the following local expression in  $U_j \simeq \mathbb D \times F_i$:
\[ T = \left[ H_j (z,t) \, i  \, dz \wedge d\bar z \right] \otimes \nu_j  ,\]
where $H_j (\cdot,t)$ are positive harmonic functions and $\nu_j$ is a positive measure on $F_i$.  This is a foliated cycle if and only if $H_j(\cdot,t)$ is constant. 

The harmonic current is unique for laminations and singular foliations of $\mathbb P^2$ \cite{FS1}, \cite{FS2}, it is also unique for conformal foliations which do not admit any transverse invariant measure \cite{DK}. Let us see how a harmonic current defines a cohomology class $[T]$ in the Dolbeault cohomology group $H^{1,1}(S , \mathbb C)$. The Bott-Chern cohomology group $H_{BC}^{1,1}(S,\mathbb C)$  is  the quotient of the space of smooth closed $(1,1)$-forms by the space of $\partial \bar \partial$-exact ones (see  \cite{Demailly}, section VI.8). When $S$ is a K\"ahler surface, the natural morphism $H_{BC}^{1,1}(X, \mathbb C) \to H^{1,1}(X, \mathbb C)$ is an isomorphism by the classical $\partial \bar \partial$-lemma. In particular, since a harmonic current belongs to the dual of $H_{BC}^{1,1}(S,\mathbb C)$, it also belongs to the dual of $H^{1,1}(S,\mathbb C)$, which is isomorphic to $H^{1,1}(S,\mathbb C)$ by duality. That allows to define $[T] \in H^{1,1}(S , \mathbb C)$ as desired.

\subsection{Line bundles} 

These are the elements of $H^1(M, \mathcal O^*_\FF)$. They can be defined using the charts by identifying $(z,t,\xi) \in U_j \times \mathbb C$ with $(z',t',\xi') \in U_{j'} \times \mathbb C$ using (\ref{rule}) and the rule $\xi = \gamma_{jj'} (z' , t')  \cdot \xi '$ for some $\gamma_{jj'} \in \mathcal O_\FF^*$.  We shall use the following line bundles on $M$

\begin{enumerate}
\item[-] \emph{Tangent line bundle} $T _\FF$:  $\gamma_{jj'}  := \frac {\partial \alpha_{jj'} }{ \partial{z'}}$.
\item[-] \emph{Canonical line bundle} $K_\FF$:  $\gamma_{jj'}  := ({\partial \alpha_{jj'} \over \partial{z'}} )^{-1}$ (dual of $T_\FF$)
\item[-] \emph{Normal line bundle} $N_\FF$: $\gamma_{jj'}  := {d \beta_{jj'} \over dt'}$. 
\end{enumerate}

The definition of $N_\FF$ concerns Levi-flat hypersurfaces, $\beta$ is in that case (at least) of class $C^1$. The definition of $N_\FF$ for minimal sets uses the ambiant foliation. In any case the transition functions of $N_\FF$ are leafwise constant. We have the following adjunction formula (see \cite{BrunellaCF}, section 1), which also holds for immersed Levi-flats (see \cite{Deroin}, section 3).
\begin{prop}\label{adju}
$K_\FF = {K_S}_{\, \vert \FF} +  N_\FF$. 
\end{prop}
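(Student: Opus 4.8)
The plan is to establish the adjunction formula $K_\FF = {K_S}_{\, \vert \FF} + N_\FF$ by computing the transition functions of both sides in the foliated charts and checking they agree. First I would recall that a line bundle on $M$ is determined by its cocycle $\{\gamma_{jj'}\}$ with respect to the covering $\{U_j\}$, so that the tensor product ${K_S}_{\, \vert \FF} + N_\FF$ has transition functions equal to the product of the transition functions of ${K_S}_{\, \vert \FF}$ and $N_\FF$; the claim is that this product equals the cocycle of $K_\FF$, namely $(\partial \alpha_{jj'}/\partial z')^{-1}$.

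The key computation is local. In a foliated chart $U_j \simeq \mathbb D \times F_i$ I would choose ambient holomorphic (resp. $C^1$) coordinates $(z_j, w_j)$ on a neighborhood of $U_j$ in $S$ adapted to $M$, so that $M = \{ w_j \in F_i\}$, the leaves are $\{w_j = \text{const}\}$, and the change of charts on $S$ restricts on $M$ to $z_j = \alpha_{jj'}(z_{j'}, w_{j'})$, $w_j = \beta_{jj'}(w_{j'})$ as in (\ref{rule}). The canonical bundle $K_S$ has transition functions given by the inverse Jacobian determinant of the coordinate change $(z_{j'}, w_{j'}) \mapsto (z_j, w_j)$; restricted to $M$, the Jacobian matrix is block-triangular — the upper-left entry is $\partial \alpha_{jj'}/\partial z_{j'}$, the lower-right entry is $d\beta_{jj'}/dw_{j'}$, and the lower-left entry vanishes because $\beta_{jj'}$ does not depend on $z_{j'}$. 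Hence $\det = (\partial \alpha_{jj'}/\partial z_{j'})\cdot(d\beta_{jj'}/dw_{j'})$, and so ${K_S}_{\, \vert \FF}$ has transition functions $\left[(\partial \alpha_{jj'}/\partial z_{j'})\cdot(d\beta_{jj'}/dw_{j'})\right]^{-1}$. Multiplying by the transition functions $d\beta_{jj'}/dw_{j'}$ of $N_\FF$ gives exactly $(\partial \alpha_{jj'}/\partial z_{j'})^{-1}$, the cocycle of $K_\FF$. This proves the formula.

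I expect the main subtlety to be the regularity bookkeeping rather than the algebra: one must check that adapted ambient coordinates exist with the block-triangular structure, and that the restriction of the $K_S$ cocycle to $M$ is well-defined as a leafwise-holomorphic cocycle even though $\beta_{jj'}$ is only $C^k$ transverse — this is fine because $d\beta_{jj'}/dw_{j'}$ is leafwise constant and the product telescopes correctly, and for minimal sets one works inside the ambient singular holomorphic foliation away from $\mathcal E$ where genuine holomorphic adapted coordinates are available. Finally I would simply cite \cite{BrunellaCF}, section 1, for the embedded case and \cite{Deroin}, section 3, for the immersed case, since the computation there is the same.
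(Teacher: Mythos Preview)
The paper does not actually prove this proposition: it simply states the formula and refers the reader to \cite{BrunellaCF}, section~1, and \cite{Deroin}, section~3. Your proposal supplies the standard cocycle computation that lies behind those references --- the block-triangular Jacobian of the adapted change of charts, the determinant $(\partial\alpha_{jj'}/\partial z')\cdot(d\beta_{jj'}/dt')$, and the cancellation against the $N_\FF$ cocycle --- and then cites the same two sources. So your argument is correct and strictly more detailed than what the paper itself offers.

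One small imprecision worth tightening: in the Levi-flat case you cannot literally pick ambient \emph{holomorphic} coordinates $(z_j,w_j)$ on $S$ with $M=\{w_j\in(0,1)\}$, since the transverse direction is only real and the CR foliation need not extend holomorphically to a neighbourhood. The clean way to phrase your computation there is via the leafwise-holomorphic short exact sequence $0\to T_\FF \to TS|_M \to TS|_M/T_\FF \to 0$, whose determinant gives $K_S|_\FF = K_\FF - (TS|_M/T_\FF)$; one then checks that the class of $\partial/\partial t'$ in the quotient transforms by $d\beta_{jj'}/dt'$, so $TS|_M/T_\FF \cong N_\FF$ in $H^1(M,\mathcal O_\FF^*)$. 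You already flag this as ``regularity bookkeeping'', and indeed it is exactly what the cited references carry out, so there is no genuine gap.
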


 A metric $h$ on a line bundle $L$ is locally defined by $h_j (z , t ) \vert \xi \vert$, where $h_j \in C^\infty_\FF$ is  positive. If $L$ is defined by $(\gamma_{jj'})$ then  
\begin{equation*}\label{ddss} 
h_{j'} (z' , t' ) = h_j (z , t) \vert \gamma_{jj'} (z' , t' ) \vert . 
\end{equation*}
Metrics always exist, they can be  constructed by using partitions of unity. If $h'$ is another metric on $L$, then $h' = h e^\tau$ for some   function $\tau \in C^\infty_\FF$. 
The \emph{Chern curvature} of $h$ is the form $\omega_h \in \mathcal A^{1,1}_\FF$ locally defined by 
\[  \omega_h := \frac{1}{2 i \pi}  \partial \overline{\partial} \log h_j (z,t) ^2.  \]
If $h' = h e^\tau$ is another metric, then $\omega_{h'} = \omega_h + \frac{1}{i \pi}  \partial \overline{\partial} \tau$. If $T$ is a harmonic current, we define the intersection
\[  T \cdot L  :=   [T] \cdot [L]  = T (\omega _h ) , \] 
the last equality being independent of the metric $h$ on $L$. 

\subsection{Some intersections} 

The \emph{Euler-Poincar\'e characteristic} of  a harmonic current is $\chi(T) := T \cdot T_\FF$.
That definition extends Gauss-Bonnet formula for compact smooth curves. We have the following result, see \cite{Candel}. 
 
\begin {prop}[Candel] \label{cha}
If $\FF$ has a parabolic leaf $L$, then there exists an Alhfors current such that $\chi(A) = 0$.
\end{prop}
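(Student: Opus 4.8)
The plan is to fix a smooth Hermitian metric $m$ on $T_\FF$, to realize $\chi(A)=A\cdot T_\FF$ as a limit of normalized integrals of the Chern curvature over the pieces $D_r=f(\mathbb D_r)$ that exhaust the parabolic leaf $L$, to rewrite each such integral as a boundary term, and finally to choose the Ahlfors exhaustion so that this boundary term becomes negligible compared with the area. Two preliminary reductions: if $L$ is compact it is a torus, and then $[L]/\mathrm{area}_m(L)$ is itself an Ahlfors current with $\chi=\chi(\mathbb T^2)/\mathrm{area}_m(L)=0$; if $L$ is not compact it is not proper in $M$, hence it accumulates on another leaf and therefore has infinite $m$-area, so that $a_r:=\mathrm{area}_m(D_r)\to\infty$ as $r\to\infty$ — a fact used repeatedly below.

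Now let $\omega\in A^{1,1}_\FF$ be the Chern curvature of $m$; it is continuous, hence bounded on the compact $M$, and $A\cdot T_\FF=A(\omega)=\lim_n a_{r_n}^{-1}\int_{D_{r_n}}\omega$ for the Ahlfors current built along a sequence $r_n$. Pulling back by the uniformization $f:\mathbb C\to L$ and writing $f^*m=e^{2u}\,|dw|^2$, one has $f^*\omega=-\tfrac{1}{2\pi}\Delta_0 u\,dx\wedge dy$ (indeed $\omega=\tfrac{1}{2\pi}K_m\,vol_m$), so by Green's formula on the round disc $\mathbb D_r$,
\[ \int_{D_r}\omega \;=\; -\frac{1}{2\pi}\int_{\mathbb D_r}\Delta_0 u \;=\; -\,r\,\bar u'(r), \qquad \bar u(r):=\frac{1}{2\pi}\int_0^{2\pi}u(r,\theta)\,d\theta ; \]
equivalently, by Gauss--Bonnet, $\int_{D_r}\omega=1-\tfrac{1}{2\pi}\int_{\partial D_r}k_g\,ds$ where $k_g$ is the leafwise geodesic curvature of $\partial D_r$. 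Consequently $\chi(A)=-\lim_n r_n\bar u'(r_n)/a_{r_n}$, and since $a_{r_n}\to\infty$ it suffices to produce $r_n\to\infty$ along which $l_{r_n}/a_{r_n}\to 0$ (so that the limit current $A$ is genuinely an Ahlfors current) and, simultaneously, $r_n\bar u'(r_n)/a_{r_n}\to 0$.

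The heart of the proof, and the step I expect to be the real obstacle, is this simultaneous selection. For the Ahlfors condition the classical length--area inequality $l_r^2\le 2\pi r\,a_r'$ together with $a_r\to\infty$ shows that $\{\,r:l_r/a_r\ge\varepsilon\,\}$ has finite mass for $dr/r$, for every $\varepsilon>0$. One needs the analogous smallness for the ``curvature--bad'' set $\{\,r:|r\bar u'(r)|/a_r\ge\varepsilon\,\}$, i.e.\ that $\int_1^R|\bar u'(r)|\,dr$ grows slower than $a_R$. The upper side is harmless: Jensen gives $e^{2\bar u(r)}\le\tfrac{1}{2\pi}\int_0^{2\pi}e^{2u}\,d\theta=a_r'/(2\pi r)$, so $\bar u$ cannot run to $+\infty$ faster than $\log a_r'$. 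The delicate point is to rule out that $\partial D_{r_n}$ is short (making $l_{r_n}/a_{r_n}\to 0$ cheap) while being wildly curved (making $r_n\bar u'(r_n)$ comparable to $a_{r_n}$); this is where parabolicity is used beyond the bare existence of the Ahlfors current, and it is handled by playing the Gauss--Bonnet identity against the length--area estimate — using $(r\bar u')'=r\,\overline{\Delta_0 u}$ and $\Delta_0 u=-K_m\!\circ\! f\cdot e^{2u}$ with $|K_m|\le C$ on the compact $M$, which bounds $|(r\bar u')'|$ by a multiple of $a_r'$ — together with a diagonal extraction over $\varepsilon\to 0$. Choosing $r_n$ outside both bad sets then yields an Ahlfors current $A$ with $\chi(A)=0$; this is Candel's argument, and the genuine work lies entirely in this last paragraph.
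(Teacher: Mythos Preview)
Your setup is correct and the identity $\int_{D_r}\omega=-r\bar u'(r)$ is fine, but the decisive step --- producing radii $r_n$ along which \emph{both} $l_{r_n}/a_{r_n}\to 0$ and $r_n\bar u'(r_n)/a_{r_n}\to 0$ --- is not actually carried out, and the tools you invoke do not close it. The bound $|(r\bar u')'|\le C\,a_r'$ that you derive from $|K_m|\le C$ integrates only to $|r\bar u'(r)|\le C\,a(r)$; it gives no smallness of the ratio, and in particular does not show that the ``curvature--bad'' set $\{\,r:|r\bar u'(r)|/a_r\ge\varepsilon\,\}$ has finite $dr/r$-mass. Your estimate is saturated exactly when $K_m$ is a nonzero constant on the leaf; you are right that parabolicity forbids this extreme case (a complete metric with $K\le -c<0$ forces hyperbolicity via Ahlfors--Schwarz), but nothing in your argument excludes, say, $K_m\le 0$ with $\int_{D_r}K_m\,dA_m\le -\delta\,a(r)$ for all $r$ on the Ahlfors--good set. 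The bootstrapping you hint at (Jensen on $e^{2\bar u}$, etc.) yields only one-sided bounds on $\bar u$ and does not produce a contradiction. In short, the ``genuine work'' you flag in the last paragraph is not done, and I do not see how to do it with round circles $\partial\mathbb D_r$ alone.

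The paper's indicated route --- and Candel's actual argument --- is different and avoids this difficulty entirely: one first fixes an Ahlfors sequence $r_n$ with $l_{r_n}/a_{r_n}\to 0$ and then \emph{perturbs the boundary} $\partial D_{r_n}$ to a nearby curve (a broken leafwise geodesic with vertices on $\partial D_{r_n}$ spaced at bounded leafwise distance) bounding a domain $\Omega_n$ with $|a(\Omega_n)-a_{r_n}|=o(a_{r_n})$ and $\mathrm{length}(\partial\Omega_n)=O(l_{r_n})$. For such a curve the total turning satisfies $\int_{\partial\Omega_n}|k_g|\,ds=O(l_{r_n})$ (the only curvature is at the $O(l_{r_n})$ corners, each contributing at most $\pi$), so Gauss--Bonnet gives $\big|\int_{\Omega_n}K_m\,dA_m\big|\le 2\pi+O(l_{r_n})=o(a_{r_n})$. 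The limit current of $[\Omega_n]/a(\Omega_n)$ is then the same Ahlfors current $A$ and $\chi(A)=0$. The point is that by deforming the boundary one \emph{forces} the geodesic-curvature term to be controlled by the length rather than by the area; no delicate radius selection is needed.
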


The arguments use Gauss-Bonnet formula for open Riemann surfaces: the idea is to perturb the boundary of $A_n$ (see subsection \ref{FC} for the definition) in order to control its geodesic curvature. The following lemma is crucial, the proof uses the Bott connexion on $N_\FF$ (see \cite{CanCon2}, chapter 6), which is flat when restricted to  $\FF$. The intersection $T\cdot N_\FF$ will be interpreted later as a Lyapunov exponent.

\begin{lem}\label{normalbott} $ $
\begin{enumerate}
\item $T\cdot N_\FF = 0$ for every foliated cycle $T$,
\item $C \cdot C = 0$ for every compact leaf $C$.
\end{enumerate}
\end{lem}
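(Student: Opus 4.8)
The plan is to establish both parts by exploiting the flatness of the Bott connection on $N_\FF$ when restricted to leaves of $\FF$. Recall that the transition functions $\gamma_{jj'} = d\beta_{jj'}/dt'$ of $N_\FF$ are leafwise constant, so $N_\FF$ carries a natural leafwise-flat structure; equivalently, there is a metric $h$ on $N_\FF$ whose Chern curvature $\omega_h = \frac{1}{2i\pi}\partial\bar\partial\log h_j^2$ vanishes identically on every leaf. Concretely, one picks in each chart a positive leafwise-smooth function $h_j(z,t)$ that is \emph{leafwise constant} (depending only on $t$) and patches these together using a partition of unity subordinate to the covering: because the $\gamma_{jj'}$ are leafwise constant, the patched metric can be arranged so that $\log h_j$ is leafwise pluriharmonic, i.e. $\partial\bar\partial\log h_j = 0$ along leaves. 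This is exactly the statement that the Bott connection on $N_\FF$ is flat along $\FF$, as used in \cite{CanCon2}.

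For part (1): let $T$ be a foliated cycle. By definition $T\cdot N_\FF = T(\omega_h)$ for any metric $h$ on $N_\FF$, the value being independent of $h$. Choosing the Bott metric described above, $\omega_h \equiv 0$ on leaves, hence $\omega_h = 0$ as an element of $A^{1,1}_\FF$, and therefore $T(\omega_h) = 0$. Thus $T\cdot N_\FF = 0$. (Alternatively, without invoking the special metric: for a general metric $h'$ one has $\omega_{h'} = \frac{1}{i\pi}\partial\bar\partial\tau$ for a leafwise function $\tau$, up to the flat part; since a foliated cycle annihilates leafwise $d$-exact forms and in particular $\partial\bar\partial$-exact ones, and since $\omega_{h'}$ differs from the flat curvature by such a term, $T(\omega_{h'})=0$.)

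For part (2): let $C$ be a compact leaf. Its current of integration $[C]$ is a foliated cycle, so by part (1), $[C]\cdot N_\FF = 0$. Now I use the adjunction formula of Proposition \ref{adju}, $K_\FF = {K_S}_{\vert\FF} + N_\FF$, restricted to $C$: this gives $K_C = \KSC + {N_\FF}_{\vert C}$ as line bundles on the compact Riemann surface $C$. Taking degrees and using $[C]\cdot N_\FF = \deg {N_\FF}_{\vert C} = 0$, we get $\deg K_C = \deg \KSC$, i.e. $2g(C)-2 = {K_S}\cdot C$ by adjunction in the surface. On the other hand, the usual adjunction formula for a smooth compact curve $C$ in the surface $S$ reads $2g(C) - 2 = C\cdot C + K_S\cdot C$. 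Comparing the two equalities forces $C\cdot C = 0$. Equivalently and more directly: the normal bundle $N_\FF$ restricted to $C$ is precisely the normal bundle $N_{C/S}$ of $C$ in $S$, whose degree is the self-intersection $C\cdot C$; and this degree is $[C]\cdot N_\FF = 0$ by part (1).

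The only genuinely delicate point is the construction of the leafwise-flat (Bott) metric on $N_\FF$ and the verification that its Chern curvature vanishes \emph{along the leaves} — this requires care because the transition functions, while leafwise constant, vary in the transverse direction with only $C^1$ (for Levi-flats) or merely continuous-transverse-holonomy regularity, so one must check that the partition-of-unity patching does not destroy leafwise pluriharmonicity. Since the curvature form $\omega_h$ only ever enters paired against a harmonic or foliated current through its values on leaves, transverse roughness is harmless; the substantive content is entirely the flatness of the Bott connection on $\FF$, which is classical. Once that is in hand, both statements follow by the short arguments above, with part (2) being an immediate corollary of part (1) together with the adjunction formula.
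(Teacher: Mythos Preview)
Your argument for part (2) via the normal bundle is correct and matches the paper. The issue is in part (1).

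Your main line of attack --- constructing a Hermitian metric on $N_\FF$ whose Chern curvature vanishes along leaves --- has a genuine gap that you yourself flag but do not close. If you pick local metrics $h_j$ depending only on $t$ and patch with a partition of unity $\{\psi_k\}$, the resulting metric in chart $j$ is $\sum_k \psi_k\,|\gamma_{jk}|$; since the $\psi_k$ depend on the leafwise variable $z$, this is \emph{not} leafwise constant, and $\partial\bar\partial\log h_j$ need not vanish. Arranging the $\psi_k$ to be constant along plaques is obstructed: such functions do not vanish near the $z$-boundary of the chart, so they cannot be extended by zero. In short, there is no reason to expect a leafwise-flat \emph{Hermitian} metric to exist globally; the Bott connection is not in general the Chern connection of any metric.

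The paper's proof avoids this entirely by exploiting a property specific to foliated cycles (as opposed to general harmonic currents): since $T$ annihilates all $d$-exact leafwise forms, the number $T(\omega)$ is the same for the curvature $\omega$ of \emph{any} smooth connection on $N_\FF$, not just Chern connections of metrics --- two curvatures differ by $d\alpha$ with $\alpha\in A^1_\FF$. One then simply takes the Bott connection itself, whose curvature vanishes along $\FF$, and concludes $T\cdot N_\FF=0$. Your parenthetical ``alternative'' is groping toward exactly this, but as written it is circular (it presupposes the flat metric you have not built). The fix is to replace ``flat metric'' by ``Bott connection'' and to invoke $d$-closedness of $T$ rather than $\partial\bar\partial$-closedness; once you do that, the argument is the paper's.
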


\begin{proof}  The first item (see \cite{CLS}, theorem 2 for a related result) follows from the fact that if $T$ is a foliated cycle and $L$ is a line bundle, then  $T\cdot L= T (\omega)$ where $\omega$ is the curvature of any smooth connexion on $L$. Indeed the curvatures of two connexions differ by a closed $2$-form. The second item is a consequence of the first one and the fact that since $C$ is embedded, then $C\cdot C = T \cdot N_C$ where $T$ is the current of integration on $C$ and $N_C$ is the normal bundle.  \end{proof}

We shall need the following proposition. It was proved in \cite{Ghys2}, lemma 4.5 for foliated cycles. 

\begin {prop}\label{blowup1}
 Let $L$ be a line bundle on $M$. Assume that it has a holomorphic section $s$ which does not vanish identically on any leaf. Then  $T \cdot  L  \geq 0$ for any harmonic current $T$.  
\end{prop}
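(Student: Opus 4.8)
The plan is to realize the intersection number $T\cdot L$ as an integral of a leafwise Chern form that is manifestly nonnegative, by choosing a well-adapted metric on $L$ built from the section $s$. First I would recall that $T\cdot L = T(\omega_h)$ is independent of the metric $h$ on $L$; the strategy is therefore to pick $h$ so that $\omega_h$ has a sign on the support of $T$, at least after a harmless modification. Concretely, outside the zero set $Z:=\{s=0\}$ of the section, one can take the ``metric dual to $s$'': in a chart where $s$ is represented by a leafwise holomorphic function $s_j$, set $h_j := 1/|s_j|$, so that $h$ is a genuine smooth metric on $L|_{M\setminus Z}$ with $\log h_j^2 = -\log|s_j|^2$, hence $\omega_h = -\tfrac{1}{2i\pi}\partial\bar\partial\log|s_j|^2 = 0$ there (since $s_j$ is leafwise holomorphic and nonvanishing). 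The content of the proposition is thus to control what happens near $Z\cap M$, where this metric degenerates.

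The key steps, in order, are: (1) observe that since $s$ does not vanish identically on any leaf, $Z\cap M$ meets each leaf in a discrete (in fact locally finite) set, so the leafwise current $\frac{1}{2i\pi}\partial\bar\partial\log|s_j|^2$ equals, leafwise, the integration current on the divisor $Z\cap L$ — a Poincaré–Lelong identity applied leaf by leaf; (2) glue this into a statement that, as currents on $A^{1,1}_\FF$, the curvature of the singular metric $h=1/|s|$ represents $-[L]$ plus a positive ``leafwise divisor current'' supported on $Z\cap M$; (3) regularize: replace $1/|s_j|$ by $1/\sqrt{|s_j|^2+\varepsilon}$ (times a fixed smooth background metric, to make it global), obtaining smooth metrics $h_\varepsilon$ on $L$ over all of $M$ whose Chern curvatures $\omega_{h_\varepsilon}$ satisfy $\omega_{h_\varepsilon}\to -[\text{divisor current}]$ weakly while $T(\omega_{h_\varepsilon}) = T\cdot L$ for every $\varepsilon$ by metric-independence; (4) evaluate $T$ on $\omega_{h_\varepsilon}$ directly: writing $T$ in local form $[H_j(z,t)\, i\,dz\wedge d\bar z]\otimes \nu_j$ with $H_j(\cdot,t)$ positive harmonic, one has $T(i\partial\bar\partial u) = \int \Delta u \cdot H\, \ldots$ and by harmonicity of $H$ this reduces (after integrating by parts, the leafwise boundary terms being absorbed because $\nu_j$ has no atoms on the relevant pieces) to a boundary pairing that is nonnegative precisely because $-\log\sqrt{|s_j|^2+\varepsilon}$ is a decreasing function of $|s_j|$ near the zeros. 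In other words $T(\omega_{h_\varepsilon})\geq o(1)$ as $\varepsilon\to 0$, forcing $T\cdot L\geq 0$.

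The cleanest way to package step (4) is via the sub-mean-value / positivity of harmonic measure: if $\psi_\varepsilon := -\log(|s_j|^2+\varepsilon)$ then $\psi_\varepsilon$ is leafwise superharmonic (it is $-\log$ of a nonnegative leafwise-subharmonic function $|s_j|^2+\varepsilon$, which is in fact leafwise plurisubharmonic), so $i\partial\bar\partial\psi_\varepsilon \leq 0$ away from a contribution concentrated where $|s_j|$ is small; pairing a superharmonic leafwise function against the positive harmonic densities $H_j$ and the measures $\nu_j$ of a harmonic current gives a nonpositive number up to the controlled singular term, and the sign works out so that $T\cdot L\geq 0$. I would set this up so that the regularization argument is entirely local in the foliated charts and then summed with a partition of unity subordinate to the cover.

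The main obstacle I expect is step (4): justifying the integration by parts / boundary-term analysis rigorously when $T$ is merely a current (the densities $H_j$ are harmonic hence smooth leafwise, which helps, but $\nu_j$ is an arbitrary transverse measure), and making sure that in the limit $\varepsilon\to0$ no negative mass escapes — i.e. that the leafwise Poincaré–Lelong masses along $Z\cap M$ do not produce a negative contribution when tested against $T$. This is exactly where ``$s$ does not vanish identically on any leaf'' is essential: it guarantees the leafwise divisor is a sum of points with positive multiplicities, so its pairing with the positive current $T$ is $\geq 0$, and it is that pairing which appears with the favorable sign. A secondary technical point is globalizing the singular metric (multiplying by a smooth background metric so that $h_\varepsilon$ is defined on all of $M$), but this only shifts $\omega_{h_\varepsilon}$ by a fixed smooth form whose $T$-integral is finite and drops out after using metric-independence, so it is routine.
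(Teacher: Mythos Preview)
Your overall strategy is sound and parallels the paper's: build a metric on $L$ from the section $s$ whose leafwise curvature has a sign. The gap is in step (3). The local expression $h_{\varepsilon,j}=1/\sqrt{|s_j|^2+\varepsilon}$ does have nonnegative leafwise curvature, but it does \emph{not} glue to a global metric on $L$: under a change of frame $|s_j|^2=|\gamma_{jj'}|^2|s_{j'}|^2$, so $|s_j|^2+\varepsilon\neq |\gamma_{jj'}|^2(|s_{j'}|^2+\varepsilon)$, and hence $h_{\varepsilon,j'}\neq h_{\varepsilon,j}|\gamma_{jj'}|$. Throwing in a background metric $h^0$ does not repair this in the way you suggest. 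If instead one uses the honest global function $u_\varepsilon=\log(|s|^2_{h^0}+\varepsilon)$ and the harmonicity $T(\partial\bar\partial u_\varepsilon)=0$, then $T\cdot L=T(\omega_{h^0})=T\bigl(\omega_{h^0}+\tfrac{i}{2\pi}\partial\bar\partial u_\varepsilon\bigr)$, and in a chart the integrand equals $\tfrac{i}{2\pi}\partial\bar\partial\log\bigl(|s_j|^2+\varepsilon\,(h^0_j)^{-2}\bigr)$, which has no definite sign once $(h^0_j)^2$ varies along the leaf. So the positivity you want does not survive globalization, and your claim that the background metric ``only shifts $\omega_{h_\varepsilon}$ by a fixed smooth form \ldots\ and drops out'' is where the argument breaks.

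The paper avoids any limit by directly constructing a \emph{smooth} metric on $L$ with nonnegative leafwise curvature. The device that makes the gluing work is a partition of unity $\psi_j=\psi_j(t)$ depending only on the transverse coordinate: since such $\psi_j$ are leafwise constant, the combination $\varphi:=\sum_j\psi_j\,\varphi_j^{\rm reg}$ of local leafwise-surharmonic functions (regularizations of $\log^-|s_j|$) remains leafwise surharmonic. One then checks that $\varphi-\log|s_k|$ extends smoothly across $D$, so $|s|:=e^\varphi$ is a genuine smooth metric with $\omega\geq 0$, and $T\cdot L=T(\omega)\geq 0$ follows at once from the positivity of $T$. Your regularization route can be salvaged --- for instance by replacing the additive $\varepsilon$ with a convex cutoff $\chi_\varepsilon(\log|s|^2_{h^0})$ and then arguing that the residual term, supported on $\{|s|_{h^0}<\varepsilon\}$, tends to $0$ because $T$ charges no mass on the leafwise-discrete set $D$ --- but that is exactly where the work lies, and it is not the routine step you describe.
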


\begin{proof} 
It suffices to construct a smooth metric $|.|$ on $L$ whose curvature is non negative. To do so we construct a smooth leafwise surharmonic function $\varphi : M  \rightarrow \mathbb R$ with logarithmic singularities along $D := \{ s= 0 \}$ corresponding exactly with the multiplicities defined by $s$. We then define the metric by $|s| := e^{\varphi}$.  

Let $\{U_j\}$ be a finite covering of $M$ such that the line bundle and the foliation are trivial on $U_j$. We fix diffeomorphisms $U_j \simeq \mathbb D \times (0,1)$ such that the foliation is horizontal (for a minimal set replace the interval by $F_j$), let also $1_j: U_j \rightarrow L$ be a non vanishing holomorphic section.  Hence $s = s_j 1_j$ for some holomorphic function $s_j : U_j \rightarrow \mathbb C$. Schrinking the open sets $U_j$ and multiplying the sections $1_j$ by small constants, we can suppose that there exists $0<r<1$ such that $|s_j(z,t)| >1 $ if $|z|\geq r$. In particular, $\varphi_j(z,t) :=\log^- |s_j|$ is leafwise surharmonic on $U_j$. Let us regularize that function. Let $\varepsilon := \min \{ r/2, (1-r)/2 \}$ and $K: [0,1]\rightarrow \mathbb R^+$  be a smooth function with support   in $[0, \varepsilon)$ satisfying $\int_{0}^1 K(t) dt = 1$. Let us define  $\varphi_j^{reg}: U_j  \rightarrow \mathbb R$ by   $\varphi_j^{reg} (z,t) := \varphi_j(z,t)$ if $|z| < r/2$ and 
\[  \varphi_j^{reg} (z,t) := \int K( |z-(x+iy)| )\varphi_j( x+iy) dxdy \ \ \ \ \textrm{if} \ |z| \geq r/2 . \]
That function is leafwise surharmonic. (Observe that by construction $\varphi_j^{reg}$ is $0$ if $U_j$ does not intersect $D$.)

Let $0<\eta<1/2$ such that $\hat{U}_j \simeq \mathbb D \times (\eta, 1-\eta)\subset U_j$   is still a covering of $M$. Let $\hat{\psi}_j: U_j\rightarrow [0,1]$ be a smooth function of the form $\hat{\psi}_j (z,t)= \hat{\psi}_j(t)$, which is equal to $1$ on $\hat{U}_j$ and vanishes on $\mathbb D\times \big( [0,\eta/2]\cup [1-\eta/2, 1] \big)$.  Then $\psi_j= \hat{\psi}_j / \sum_{j} \hat{\psi}_j$ is smooth on $U_j$ and we have $\sum_j \psi_j =1$ on $M$.  The function $\psi_j \varphi_j^{reg}: U_j \rightarrow \mathbb R$ is smooth, leafwise surharmonic and vanishes on a neighborhood of $\partial U_j$. We extend it by $0$ outside $U_j$, so that it becomes a smooth and leafwise surharmonic function $\psi_j \varphi_j^{reg}$ on $M$. Let us verify that $\varphi = \sum_j \psi_j \varphi_j ^{reg}$ is convenient. Let $U_k$ be a flow box intersecting $D$. In some neighborhood of $D$ in $U_k$, the function $\varphi - \log |s_k|$ is equal to 
$$\sum _j \psi_j (\varphi_j^{reg} -\log |s_k|) = \sum_j \psi_j (\log |s_j| -\log |s_k|) = \sum_j \psi_j \log |1_k/1_j| . $$
Hence it extends as a smooth leafwise surharmonic function on $U_k$. Since $\log |1_k| = \varphi -\log |s_k|$ is smooth, the metric $| \cdot |$ on $L$ can be defined by $|s| = e^\varphi$ as desired.  
\end{proof} 

\section{Hyperbolicity}

In this section, we prove that Levi-flats and minimal sets in surfaces of general type are hyperbolic, and deduce consequences for their topology. We remark that the results are also valid for immersed Levi-flats, but as in section~\ref{s:harmonic currents}, for simplicity we assume $M$ is embedded.  

\subsection{Leaves are hyperbolic} \label{ss:hyperbolicity}

\begin{prop}\label{hyphyp}
Let $M \subset S$ be a minimal set or a Levi-flat hypersurface of class $C^1$ in a surface of general type. Then $\FF$ is hyperbolic. In particular, the genus of any compact leaf (if it exists) satisfies $g \geq 2$. 
\end{prop}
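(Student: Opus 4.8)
The plan is to rule out the elliptic and parabolic cases separately, using the fact that $S$ is of general type — concretely, that the canonical bundle ${K_S}$ is big and nef (or ample after passing to the minimal model, a reduction that costs nothing since blowing up preserves everything in sight), so that $K_S \cdot C > 0$ for every curve $C$ and, more importantly, that the cotangent bundle has enough positivity to prevent entire curves.

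\textit{Elliptic case.} If some leaf $L$ had universal cover $\mathbb P^1$, then $L$ would be a rational curve; since $L$ is a leaf of a foliation on $S$ it would be a complete algebraic curve, and by minimality $M$ would be a single compact leaf. Then the adjunction formula for curves gives $2g(L)-2 = K_S \cdot L + L \cdot L$, i.e. $K_S \cdot L + L\cdot L = -2 < 0$; but $K_S \cdot L > 0$ since $K_S$ is nef and big (so strictly positive on curves), and $L\cdot L = T\cdot N_\FF = 0$ by Lemma~\ref{normalbott}(2), whence $K_S \cdot L = -2 < 0$, a contradiction. Alternatively one can argue directly that a rational curve cannot be a leaf because its normal bundle would then be $\OO(-2)$-ish while surfaces of general type contain no rational curves of negative... — but the cleanest route is the inequality $K_S\cdot C > 0$ for all curves on a general type surface.

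\textit{Parabolic case.} Suppose some leaf is parabolic. By Proposition~\ref{cha} (Candel) there is an Ahlfors current $A$ — a closed positive current, in particular a foliated cycle — with $\chi(A) = A \cdot T_\FF = 0$. By the adjunction formula (Proposition~\ref{adju}), $K_\FF = {K_S}_{|\FF} + N_\FF$, so $A\cdot K_\FF = A \cdot {K_S}_{|\FF} + A \cdot N_\FF$. Now $A\cdot K_\FF = -A\cdot T_\FF = -\chi(A) = 0$, and $A\cdot N_\FF = 0$ by Lemma~\ref{normalbott}(1) since $A$ is a foliated cycle. Therefore $A \cdot {K_S}_{|\FF} = A \cdot {K_S} = 0$. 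But a closed positive current on a surface of general type must satisfy $A\cdot K_S > 0$: indeed $K_S$ is big, so it can be written as an effective $\mathbb Q$-divisor plus an ample class, $K_S \equiv E + H$ with $H$ ample and $E\geq 0$; then $A\cdot H > 0$ strictly (as $A\neq 0$ and $H$ is ample/Kähler) while $A \cdot E \geq 0$ provided $A$ puts no mass on the components of $E$ — and here this holds because $A$ is supported on $M$, which is a union of leaves disjoint from the singular set, so by the minimality/density hypothesis either $M$ avoids $\mathrm{supp}(E)$ or $M$ contains a component of $E$ as a leaf (impossible for a minimal set with a parabolic leaf, or handled by the first-item argument). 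This forces $A\cdot K_S > 0$, contradicting $A\cdot K_S = 0$.

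\textit{Main obstacle.} The delicate point is precisely the last one: controlling $A\cdot K_S$ for the Ahlfors/foliated cycle when $K_S$ is merely big and nef rather than ample, i.e. checking the current puts no mass on the negative part of $K_S$. I expect the authors handle this by first reducing to the minimal model (where $K_S$ is nef, and in fact $A\cdot K_S=0$ would force $A$ to be supported on the union of $(-2)$-curves contracted by the pluricanonical map, which are rational and thus excluded by the elliptic case), or by invoking Ahlfors currents / intersection theory à la \cite{Deroin}, \cite{FS1} more carefully. The consequence for compact leaves is then immediate: a compact leaf $C$ has $C\cdot C = 0$ by Lemma~\ref{normalbott}(2), so adjunction gives $2g(C)-2 = K_S\cdot C > 0$, hence $g(C)\geq 2$.
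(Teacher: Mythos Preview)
Your outline matches the paper's proof: eliminate elliptic leaves via classical adjunction and $C^2=0$, eliminate parabolic leaves via Candel's Ahlfors current $A$ with $\chi(A)=0$, leafwise adjunction, and $A\cdot N_\FF=0$ (Lemma~\ref{normalbott}) to force $A\cdot {K_S}_{|\FF}=0$, then derive a contradiction from the positivity of $K_S$. Your ``Main obstacle'' paragraph correctly identifies the one delicate point and correctly anticipates its resolution. Two remarks.

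The assertion ``$K_S\cdot C>0$ for every curve on a general type surface'' is false as stated: on a non-minimal surface the $(-1)$-curves satisfy $K_S\cdot E=-1$, and even on the minimal model the $(-2)$-curves give $K_S\cdot C=0$. This does not damage your elliptic argument, since $C^2=0$ (Lemma~\ref{normalbott}) already rules out those curves; but the sentence should be dropped.

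For the parabolic case, your Kodaira-lemma decomposition $K_S\equiv E+H$ does not control which curves sit in $E$, so the step ``$A$ puts no mass on $\mathrm{supp}(E)$'' is not justified as written: a priori a component of $E$ could be a compact leaf on which the foliated cycle concentrates. The paper's argument is more concrete and sidesteps this. On the minimal model the pluricanonical map furnishes a metric on $K_S$ with strictly positive curvature away from the finitely many $(-2)$-curves $\mathcal C$; those cannot be leaves (they are rational, and have self-intersection $-2\neq 0$), so the local structure of the foliated cycle forces $A\cdot {K_S}_{|\FF}>0$ directly. For a non-minimal $S'$ the paper writes $K_{S'}=\pi^*K_S+E$, handles $\pi^*K_S$ by the same curvature argument (now positive away from $E$ and the proper transforms of $\mathcal C$, none of which are leaves), and bounds $A\cdot E_{|\FF}\geq 0$ via Proposition~\ref{blowup1}. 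This is exactly the route you guessed in your final paragraph.
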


Before giving the proof of theorem \ref{hyphyp} let us recall classical facts about minimal surfaces and surfaces of general type, see \cite{BPV}, \cite{F}, \cite{Ko} and subsection \ref{sgt1}. A $(-k)$-curve is a curve biholomorphic to $\mathbb P^1$ with self-intersection $-k$. A surface $S$ is \emph{minimal} if it does not have any $(-1)$-curve. Every non minimal surface can be obtained from a minimal one by performing finitely many blowups. A minimal surface of general type has finitely many $(-2)$-curves. Let $\mathcal C$ denote the union of these curves. For $n$ large enough,  the pluricanonical map $S \to \mathbb P^N$ associated to $\vert n K_S \vert$ contracts each curve of $\mathcal C$ to singular points and is biholomorphic elsewhere. In particular, by considering the pullback of the Fubini-Study metric, $K_S$ supports a metric with positive curvature outside $\mathcal C$.

\begin{proof}
Assume that there exists a compact leaf $C$ isomorphic to $\mathbb P^1$ (elliptic leaf).  The classical adjunction formula  $-2 = K_S \cdot C + C ^2$ then contradicts $K_S \geq 0$ and $C^2 = 0$ (see lemma \ref{normalbott}). Assume now that there exists a parabolic leaf $L$ and let $A$ be an Ahlfors current such that $\chi(A) =0$ (see proposition \ref{cha}). Using the leafwise adjunction formula
 \[   - \chi(A)  = A \cdot {K_S}_{\, \vert \FF} + A\cdot N  \] 
and the fact that $ A \cdot N = 0$ (see lemma \ref{normalbott}), we deduce $A \cdot {K_S}_{\, \vert \FF} = 0$. If $S$ is a minimal surface, then $K_S$ has a metric of positive curvature outside $\mathcal C$. Since the curves of $\mathcal C$ are not leaves of $\FF$ (they are elliptic and use the argument above, or their self intersection is $-2$ and use lemma \ref{normalbott}), the local structure of foliated cycle shows that  $A \cdot {K_S}_{\, \vert \FF} > 0$, contradiction. 

Now we focus on the non minimal case. It suffices to consider a blowup $\pi : S' \to S$ of a minimal surface of general type. By restricting the formula $K_{S'} = \pi^* K_{S} + E$ (see \cite{F}, section 3) to the foliation $\FF$, and intersecting with $A$, we obtain 
\begin{equation}\label{kol}
 A \cdot  {K_{S'}}_{\, \vert \FF}    =  A \cdot  (\pi^* K_S)_{\, \vert \FF} +  A \cdot  E_{\, \vert \FF}  . 
\end{equation}
Now $\pi^* K_{S}$ has a metric of positive curvature outside $E$ and the proper transforms (of the curves) of $\mathcal C$. Moreover, by lemma \ref{normalbott}, these curves are not leaves of $\FF$: we have $E^2 = -1$ for the exceptional divisor, and a blow up does not increase the self-intersection of the proper transforms of the curves of $\mathcal C$. Hence the first term of the right side of (\ref{kol}) is positive from the local structure of foliated cycles. The second term is non negative by \cite{Ghys2}, lemma 4.5 (or by proposition \ref{blowup1} for foliated cycles).
 \end{proof}
 
Observe that even if their CR foliation is hyperbolic, there exist Levi-flats in surface of general type carrying a transverse invariant measure. Indeed, one can consider pull-backs by ramified coverings of a flat $\mathbb P^1$-bundle  whose representation $\pi_1(\Sigma) \to \mathrm{PSL}(2,\mathbb C)$ has a dense image in $\mathrm{PSO}(2,\mathbb R)$, see section \ref{s:examples}. 

\subsection{Poincar\'e metric} We consider the Poincar\'e metric in the unit disc given by 
\[ ds ^2 =  { 4 \, \vert dz \vert ^2\over (1-\vert z \vert ^2)^2} . \]
It is complete of gaussian curvature\footnote{The gaussian curvature of  $ds^2 = \rho^2 \vert dz \vert^2$ is   $- \Delta_m \log \rho = - {1 \over \rho^2} \Delta \log \rho$.} $-1$. This metric being invariant by the group of biholomorphisms of the unit disc, it induces a metric on any hyperbolic Riemann surface. Hence, on a minimal set or a Levi-flat in a surface of general type, the Poincar\'e metric on each leaf defines a metric on the tangent bundle to the foliation. The following continuity/compactness result is due to Verjovsky and Candel. It will be used in proposition~\ref{contflow}:

\begin{thm}\cite{V,Candel}\label{compactness}
Assume that $\FF$ is hyperbolic. Then the Poincar\'e metric is continuous. Equivalently, the set $U(\FF)$ of uniformization mappings $\pi : \mathbb D \to M$ of some leaf,  endowed  with the topology of  uniform convergence, is compact. 
\end{thm}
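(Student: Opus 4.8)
The plan is to establish the equivalence of the two formulations first, and then prove the compactness of $U(\FF)$, since continuity of the Poincar\'e metric follows easily once the family of uniformizations is controlled. For the equivalence: a uniformization $\pi:\mathbb D\to L$ of a leaf is determined up to precomposition with $\mathrm{Aut}(\mathbb D)$; normalizing $\pi(0)=p$ and $\pi'(0)>0$ in a fixed foliated chart, the Poincar\'e density $\rho_{\FF}$ at $p$ is exactly $2/\pi'(0)$ (with $2$ the normalization fixed in the statement). So continuity of $p\mapsto\rho_{\FF}(p)$ is the same as continuity of $\pi\mapsto\pi'(0)$ along convergent sequences of normalized uniformizations, and the latter is what compactness of $U(\FF)$ delivers.

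For compactness, I would argue as follows. Fix a finite atlas of foliated charts $U_j\simeq\mathbb D\times F_i$ with a background Hermitian metric $m_0$ on $T_\FF$ coming from the ambient metric on $S$; write $\rho_0$ for its leafwise density and $\rho_\FF$ for the Poincar\'e density. Take a sequence $\pi_n\in U(\FF)$, normalized by $\pi_n(0)=p_n$ in some chart with $\pi_n'(0)>0$; after passing to a subsequence assume $p_n\to p_\infty\in M$. The key uniform estimate is a two-sided bound $0<c\le \rho_\FF/\rho_0\le C<\infty$ on $M$. The upper bound $\rho_\FF\le C\rho_0$ is the Ahlfors--Schwarz lemma applied leafwise: each leaf maps holomorphically into $S$, which carries (being of general type, hence --- after possibly restricting to the complement of the finitely many $(-2)$-curves, which by Lemma~\ref{normalbott} are not leaves --- a metric of negative holomorphic sectional curvature bounded above by a negative constant on a neighborhood of $M$), so the leafwise metric has curvature bounded away from $0$ and dominates the induced metric. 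The lower bound, i.e. a uniform upper bound on the injectivity radius / a uniform lower bound $\rho_\FF\ge c\rho_0$, comes from the fact that $M$ is compact: every leaf is hyperbolic by Proposition~\ref{hyphyp}, so no leaf is $\mathbb C$ or $\mathbb P^1$, and a standard normal-families argument shows that if $\rho_\FF/\rho_0$ were not bounded below one could rescale a sequence of leafwise maps to extract a nonconstant holomorphic map $\mathbb C\to S$ landing in $M$, giving a parabolic or elliptic leaf in $M$ --- a contradiction. (This is exactly the place where hyperbolicity is used and is, I expect, the main obstacle: making the rescaling/extraction argument precise so that the limit map genuinely has image in $M$ and is nonconstant.)

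With the two-sided bound in hand, the family $\{\pi_n\}$ is locally uniformly bounded: on each disc $\mathbb D_r$ the images $\pi_n(\mathbb D_r)$ stay in a compact subset of $M$ (since $\pi_n$ is a $\rho_\FF$-isometry, hence an $m_0$-Lipschitz map with controlled constant), so by Montel's theorem, applied leafwise in the moving charts using the continuous dependence of $\alpha(\cdot,t')$ on $t'$, a subsequence converges uniformly on compacts to a holomorphic map $\pi_\infty:\mathbb D\to M$. The limit satisfies $\pi_\infty(0)=p_\infty$ and $\pi_\infty'(0)=\lim\pi_n'(0)>0$ (the derivative bound is uniform by the curvature estimate), so $\pi_\infty$ is nonconstant; an Ahlfors--Schwarz comparison in both directions shows $\pi_\infty$ is a local isometry from the Poincar\'e disc to the leaf through $p_\infty$ with its Poincar\'e metric, hence is the (normalized) uniformization of that leaf, i.e. $\pi_\infty\in U(\FF)$. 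This proves $U(\FF)$ is sequentially compact, and retracing the identification gives continuity of $p\mapsto\rho_\FF(p)$, completing the proof.
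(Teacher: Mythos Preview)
The paper does not prove Theorem~\ref{compactness}; it is quoted from Verjovsky and Candel and used as a black box, so there is no argument in the paper to compare yours against.

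On its own merits, your sketch has a genuine gap. The theorem assumes only that $\FF$ is hyperbolic --- it is a statement about compact Riemann surface laminations and makes no reference to the ambient surface --- yet your argument for the ``upper bound'' imports the hypothesis that $S$ is of general type and asserts that such a surface carries a K\"ahler metric of negative holomorphic sectional curvature off the $(-2)$-curves. This is false in general: what the paper uses elsewhere (e.g.\ in Proposition~\ref{hyphyp}) is that the line bundle $K_S$ carries a hermitian metric with positive Chern curvature form off $\mathcal C$, which is a very different statement from negativity of the holomorphic sectional curvature of a K\"ahler metric on $TS$. Surfaces of general type can contain rational and elliptic curves, and the existence of a K\"ahler metric with strictly negative holomorphic sectional curvature is a strong and generally unavailable hypothesis. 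Even granting such a metric, Ahlfors--Schwarz would yield $\rho_0\le C\rho_\FF$, i.e.\ the \emph{lower} bound --- as your own sentence concedes when it ends with ``dominates the induced metric''. So the paragraph labelled ``upper bound'' actually argues for the lower bound, and the genuine upper bound $\rho_\FF\le C\rho_0$ is never established.

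The correct architecture, as in Candel's paper, is intrinsic to the lamination: the upper bound $\rho_\FF\le C\rho_0$ is immediate from compactness of $M$ (each point lies in a plaque of uniform $\rho_0$-size, and Schwarz--Pick bounds the leafwise Poincar\'e metric by the Poincar\'e metric of that plaque), while the lower bound $\rho_\FF\ge c\rho_0$ is the substantial step and is obtained precisely by the Brody-type rescaling you describe in your second paragraph. The right ideas are present in your proposal but attached to the wrong inequalities, and the appeal to the ambient curvature of $S$ should be dropped entirely.
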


\subsection{Exponential growth}\label{NOVI}

In this paragraph we first observe that Levi-flats in K\"ahler surfaces have trivial second fundamental group. We then prove that the fundamental group of Levi flats in surfaces of general type have exponential growth. In particular they do not carry the geometries of $\mathbb S^3$, $\mathbb S^2 \times \mathbb R$, $\mathbb R^3$, nor $\mathrm{Nil}$. Analogous results were obtained in~\cite{IM} for Levi-flats in the complex projective plane. We follow their argument, using in our context the hyperbolicity of the foliation (see proposition~\ref{hyphyp}) and a particular -- and presumably well-known -- case of Plante's theorem for codimension one foliations without transverse invariant measure (see \cite{Plante}, corollary 6.4). 

Novikov's theorems are central, let us recall the statements (see \cite{N} and \cite{CanCon2}, chapter 9). Let $M$ be a compact orientable $3$-manifold endowed with a transversely
orientable $2$-dimensional foliation $\FF$ of class $C^2$. If one of the conditions is satisfied
\begin{enumerate}
\item[(a)] there exists a leaf $L$ such that the inclusion map $\pi_1(L) \to \pi_1(M)$ has a non-trivial kernel,
\item[(b)] the second fundamental group $\pi_2(M)$ is not trivial
\end{enumerate}
then either the foliation $\FF$ has a Reeb component $\mathbb D \times \mathbb S^1$ or the $3$-manifold $M$ is diffeomorphic to $\mathbb S^2 \times \mathbb S^1$ and $\FF$ is the product foliation. Let us notice that (a) is equivalent to the existence of a Reeb component and to the existence of a vanishing cycle. 

In the context of K\"ahler surfaces, the CR foliation of a Levi-flat is taut (the K\"ahler form is positive on complex directions, see~\cite{S}),  in particular it has no Reeb component. The proof of this last fact is actually very simple: the homology class of any compact holomorphic curve can not be trivial by Stokes formula. We deduce that if $M$ is a $C^2$ Levi-flat in a K\"ahler surface, then either $M$ has trivial second fundamental group or $M$ is diffeomorphic to $\mathbb S^2 \times \mathbb S^1$. In the latter case the leaves have zero self-intersection, that implies that $S$ is a rational fibration (up to a modification, see \cite{BPV}, section V.4) and that $M$ is tangent to the fibration. 

Let us now focus on the exponential growth of the fundamental group. The following result shows in particular that Levi-flats in surfaces of general type are not diffeomorphic to the unitary tangent bundle of $\mathbb P^1$ or  an elliptic curve. That allows to assume $g \geq 2$ in the rigidity theorem~\ref{hypunit}.

\begin{prop} \label{t:polynomial Levi-flat}
Let $M$ be a $C^2$ Levi-flat hypersurface in a compact complex surface of general type. Then the fundamental group of $M$ has exponential growth. 
\end{prop}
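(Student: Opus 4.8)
The plan is to combine Novikov's theory for the taut foliation $\FF$ with an exponential lower bound for the volume growth of the universal cover $\widehat M$ of $M$. Since exponential growth of the fundamental group passes to and from finite–index subgroups, I may first replace $M$ by a finite cover and assume that $M$ is orientable and $\FF$ transversely orientable. As $S$ is of general type it is projective, hence K\"ahler, so the K\"ahler form restricts to a positive leafwise area form and $\FF$ is taut; in particular $\FF$ has no Reeb component (this also follows from Proposition~\ref{hyphyp}, a toral boundary leaf of a Reeb component being parabolic). By Proposition~\ref{hyphyp} the foliation is moreover hyperbolic, so $\FF$ is not the product foliation of $\mathbb S^2\times\mathbb S^1$, whose leaves would be elliptic. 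Novikov's theorems then give that the inclusion $\pi_1(L)\to\pi_1(M)$ is injective for every leaf $L$ of $\FF$, and that $\pi_2(M)=0$.

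Next I would analyse the lifted foliation $\widehat\FF:=p^{*}\FF$ on $\widehat M$, where $p:\widehat M\to M$ is the universal cover. For a leaf $L$ of $\FF$ and a connected component $\widehat L$ of $p^{-1}(L)$, the restriction $p|_{\widehat L}:\widehat L\to L$ is a covering map whose characteristic subgroup in $\pi_1(L)$ is exactly $\ker\big(\pi_1(L)\to\pi_1(M)\big)$ (a loop of $L$ lifts to a loop of $\widehat L$ iff its lift to $\widehat M$ is a loop); by the previous paragraph this kernel is trivial, so $\widehat L$ is the universal cover of $L$, hence $\widehat L\cong\mathbb D$ since $\FF$ is hyperbolic. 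Thus $\widehat M$ is foliated by copies of the disc. Observe also that $\pi_1(M)$ is infinite: otherwise $\widehat M$ would be a closed simply connected $3$-manifold, hence $\mathbb S^3$, which carries no Reebless foliation.

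The heart of the argument is then a volume estimate. Fix a Riemannian metric $g$ on $M$ and lift it to $\widehat M$. The leaves of $\FF$ also carry the leafwise Poincar\'e metric, which is \emph{continuous} on the compact space $M$ by Theorem~\ref{compactness}; since it and the restriction of $g$ are both continuous metrics on the tangent bundle $T_\FF$ over $M$, they are uniformly comparable, so on every leaf of $\widehat\FF$ the metric induced by $g$ is bi-Lipschitz, with constants independent of the leaf, to the Poincar\'e metric of $\mathbb D$. Fixing $\widehat x\in\widehat M$ and letting $\widehat L$ be the leaf of $\widehat\FF$ through it, this yields $\mathrm{area}\big(B_{\widehat L}(\widehat x,r)\big)\geq c\,e^{c'r}$ for $r\geq 1$, with $c,c'>0$ uniform, where $B_{\widehat L}(\widehat x,r)$ is the intrinsic ball. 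Using compactness of $M$ to fix a uniform transverse thickness $\delta>0$, the $\delta$-tube around $B_{\widehat L}(\widehat x,r)$ has $g$-volume at least $\delta'\,\mathrm{area}\big(B_{\widehat L}(\widehat x,r)\big)$ and is contained in $B_{\widehat M}(\widehat x,r+\delta)$; hence $\mathrm{vol}\big(B_{\widehat M}(\widehat x,r)\big)\geq c''e^{c'r}$. So $\widehat M$ has exponential volume growth, and by the \v{S}varc--Milnor lemma $\pi_1(M)$, being quasi-isometric to $\widehat M$, has exponential growth.

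The main obstacle lies in the two structural inputs. The first is Novikov's injectivity statement, which needs the $C^{2}$ hypothesis together with tautness and the absence of Reeb components; the second is the \emph{uniformity} of the comparison between the Poincar\'e and the ambient metrics along leaves, which is precisely the compactness content of Theorem~\ref{compactness} of Verjovsky and Candel (one also needs the tubular-neighbourhood volume estimate to hold with constants independent of the leaf, which again rests on the compactness of $M$). An alternative to the volume-growth step is to invoke Plante's theorem (\cite{Plante}, Corollary~6.4) when $\FF$ carries no transverse invariant measure; the route above has the advantage of treating uniformly also those Levi-flats in surfaces of general type that do carry such a measure.
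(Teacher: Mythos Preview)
Your overall strategy coincides with the paper's: use hyperbolicity of the leaves (Proposition~\ref{hyphyp}), continuity of the Poincar\'e metric (Theorem~\ref{compactness}), and Novikov's theorem to ensure that on the universal cover $\widehat M$ the lifted leaves are discs, and then exploit the exponential area growth of the Poincar\'e disc. The paper runs this by contradiction and a pigeonhole count on lifted foliated charts; you try a direct volume estimate together with the \v{S}varc--Milnor lemma. Both routes are natural and, once completed, essentially equivalent, but your version has a genuine gap at the key step.

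The unjustified claim is that the $\delta$-tube around $B_{\widehat L}(\widehat x,r)$ has $g$-volume at least $\delta'\cdot\mathrm{area}\big(B_{\widehat L}(\widehat x,r)\big)$. Compactness of $M$ gives only a \emph{local} product structure; it does not prevent the disc leaf $\widehat L$ from returning transversally close to itself in $\widehat M$, in which case normal segments from far-apart leaf points overlap and the tube volume can be arbitrarily small compared with the area (think of a spiraling leaf). What is actually needed is that each lifted flow box $gU_i^0$ contains at most one plaque of $\widehat L$. This is not automatic: it is another consequence of Novikov's theory, since two plaques in one box yield, after joining them by a short transverse arc and a leafwise path and spiraling, a closed transversal in the simply connected $\widehat M$, hence a null-homotopic closed transversal in $M$ and a Reeb component. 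The paper's proof is organized precisely around this point: assuming subexponential growth, it counts that $B_{\widehat L}(\widehat x,r)$ must meet some lifted chart in two plaques and derives the contradiction. Once you add this one-plaque-per-chart statement, your tubular-neighborhood step goes through (or, more cleanly, count charts instead of thickening), and your argument becomes a direct-implication rephrasing of the paper's. A minor remark: you do not need the Poincar\'e conjecture to get $\pi_1(M)$ infinite; the existence of a closed transversal for a co-orientable foliation, combined with Novikov, already forces it.
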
 

\begin{proof}
By proposition~\ref{hyphyp}, all leaves are hyperbolic Riemann surfaces. Because the Poincar\'e metric is continuous, the universal coverings of the leaves have exponential growth. Assume by contradiction that the fundamental group of $M$ has sub-exponential growth. By Novikov, the map $\pi_1(L) \to \pi_1(M)$ is injective for every leaf. Hence the pull-back foliation $\widetilde{\mathcal F}$ on the universal covering $\widetilde{M}$ has  simply connected leaves. Let us consider a finite covering $\{ U_i\}_{i\in I}$ of $M$ by foliated charts. Let $U_i^0$ be some lift of $U_i$ in $\widetilde{M}$. Observe that $\{ g U_i^0 \}$ for $g\in \pi_1(M)$ and $i\in I$ is a covering of $\widetilde{M}$ by foliated charts of $\widetilde{\mathcal F}$. Moreover, because $\pi_1(M)$ has subexponential growth $\delta$, the number of charts of this cover in $B_{\widetilde{M}}(x,R)$ is subexponential. By the pigeon hole principle, there exists $g U_i^0$ whose intersection with $L$ contains at least two different plaques. Hence there exists a simple closed loop in $\widetilde{M}$ which is transverse to $\widetilde{\mathcal F}$ (Novikov). Its projection in $M$ is a loop transverse to $\mathcal F$ which is homotopically trivial. But that must produce a Reeb component. \end{proof}

\section{Leafwise Brownian motion and Lyapunov exponent}\label{BMLE}

As in the preceeding sections the arguments work for immersed Levi-flats and minimals sets, for simplicity we assume that they are embedded. 

\subsection{Heat kernel and Brownian motion} 

Let $M$ be a minimal set or a Levi-flat hypersurface in a compact complex surface. Given a metric $m$ on the tangent bundle $T_\FF$, we denote by $vol_m$ the leafwise volume form and by $\Delta_m$ the leafwise Laplace-Beltrami operator. We have $2i \partial \bar \partial  = \Delta_m  \cdot vol_m$ and $2i \partial \bar \partial  = \Delta \cdot dx \wedge dy$ using usual laplacian. Let $p_m(t,x,y)$ be the leafwise heat kernel: this is the smallest positive solution of the heat equation
\[ {\partial \over \partial t} = 
 \Delta_m \ \ , \ \ \lim_{t \to 0} p_m(t,x,y) = \delta_x (y) \ , \]
where $\delta_x$ denotes the Dirac mass at $x$, the limit being in the sense of distributions (see \cite{chavel}). We consider the Brownian motion on $L_x$ with transition probability $p_m(t,x,y)$. This is a diffusion process with continous sample paths. Since the leaves are complete and of bounded geometry, the latter are defined for every $t \in \mathbb R ^+$. Let  $\Gamma_x := \{ \gamma : \mathbb R ^+ \to L_x \,  , \, \gamma(0) = x \}$ be the leafwise continuous paths, and $W_x$ be the Wiener probability measure on $\Gamma_x$ given by Brownian motion.

\subsection{Harmonic measure and Garnett's theory} 

We refer to \cite{Garnett}. A \emph{harmonic measure} is a probability measure $\mu$ on $M$ satisfying  
\[ \forall \psi \in A^0_\FF \ , \  \int  \Delta_m \, \psi  \, d\mu  = 0 . \]
There always exist harmonic measures, this can be proved using Hahn-Banach theorem or Markov-Kakutani fixed point theorem. The set of harmonic measures is convex, its extremal points are called \emph{ergodic}. Harmonic measures allows to introduce ergodic theory and  Lyapunov exponents. Let us specify the associated dynamical system $(\Gamma, (\sigma_t)_{t \geq 0} , \bar \mu)$. The phase space is $\Gamma := \cup_{x \in M} \Gamma_x$ and $(\sigma_t)_{t \geq 0}$ is the shift semi-group acting on $\Gamma$ by  
\[ \sigma_t (\gamma) (u) := \gamma (t+u) . \]
 The invariant probability measure $\bar \mu$ on $\Gamma$ is defined by \[ \bar \mu  := \int_M W_x \, d\mu(x)  .\]
 

\subsection{Lyapunov exponent}

For any $x \in M$ and   $\gamma \in \Gamma_x$,  let 
\[ h_{\gamma,t} : \tau_x \to \tau_{\gamma(t)} \] 
denote the holonomy map over the path $\gamma : [0,t] \to L_x$. Here $(\tau_x)_{x \in M}$ denotes a family of discs (or intervals if we deal with Levi-flats) transversal to $\FF$ and centered at $x$, it can be constructed from transversal discs in the foliated charts. To simplify the exposition, we  denote $h'_{\gamma,t}$ for $h'_{\gamma,t} (0)$. Let $\vert \cdot \vert$ be a metric on $N_\FF$.

\begin{thm} \label{limexplya}
Let $\mu$ be an ergodic harmonic measure on $M$. There exists $\lambda \in \mathbb R$ such that for $\mu$-almost every $x \in M$ and $W_x$-almost every $\gamma \in \Gamma_x$ 
\[ \lim_{t\to + \infty} \ \frac{1}{t} \log \ \lvert h_{\gamma,t} ' \rvert  = \lambda . \]
The number $\lambda$ is called the \emph{Lyapunov exponent} of $\mu$.
 \end{thm}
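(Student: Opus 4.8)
The plan is to derive the existence of $\lambda$ from Kingman's subadditive ergodic theorem applied to the dynamical system $(\Gamma,(\sigma_t)_{t\ge0},\bar\mu)$. First I would observe that the holonomy maps satisfy the cocycle relation $h_{\gamma,t+s}=h_{\sigma_t\gamma,s}\circ h_{\gamma,t}$, so that the derivatives at the origin multiply: $\lvert h'_{\gamma,t+s}\rvert=\lvert h'_{\sigma_t\gamma,s}\rvert\cdot\lvert h'_{\gamma,t}\rvert$ once we trivialize $N_\FF$ appropriately along the path. Hence the function $a(t,\gamma):=\log\lvert h'_{\gamma,t}\rvert$ is an \emph{additive} cocycle over the shift; a priori one gets a subadditive cocycle because the chosen metric $\lvert\cdot\rvert$ on $N_\FF$ need not be holonomy-invariant, but since $M$ is compact the two metrics (the reference one and the one transported by holonomy) are uniformly comparable, so $\lvert a(t+s,\gamma)-a(t,\gamma)-a(s,\sigma_t\gamma)\rvert$ is uniformly bounded, and after the standard renormalization trick $a$ becomes genuinely subadditive.

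Next I would check the integrability hypothesis of Kingman: one needs $\int_\Gamma a(1,\gamma)^+\,d\bar\mu(\gamma)<\infty$, equivalently $\int_M\int_{\Gamma_x}\log^+\lvert h'_{\gamma,1}\rvert\,dW_x(\gamma)\,d\mu(x)<\infty$. This follows from the bounded geometry of the leaves endowed with the Poincar\'e metric (Theorem~\ref{compactness} via the hyperbolicity established in Proposition~\ref{hyphyp}) together with Gaussian-type estimates on the Brownian displacement: the holonomy derivative over a path of Poincar\'e length $\ell$ is controlled by $e^{C\ell}$ for a uniform constant $C$ coming from the $C^1$-size of the transition functions $\beta_{jj'}$, and the expected length of a Brownian path up to time $1$ has finite exponential moments. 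Actually even $\int\lvert a(1,\gamma)\rvert\,d\bar\mu<\infty$ holds by the same argument applied to $h^{-1}$, which is convenient.

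Then Kingman's theorem gives that $\frac1t a(t,\gamma)$ converges $\bar\mu$-almost everywhere to a $\sigma_t$-invariant function $\lambda(\gamma)$, with $\lambda(\gamma)=\lim_t\frac1t\int a(t,\cdot)\,d\bar\mu$ constant on each ergodic component. Since $\mu$ is assumed ergodic for the foliated heat semigroup, the measure $\bar\mu$ is ergodic for the shift $(\sigma_t)$ — this is Garnett's correspondence between ergodicity of the harmonic measure and ergodicity of the diffusion on $\Gamma$ — so $\lambda(\gamma)$ is $\bar\mu$-almost everywhere equal to a constant $\lambda\in\mathbb R$. Unwinding the definition of $\bar\mu=\int_M W_x\,d\mu(x)$ yields precisely the stated conclusion: for $\mu$-a.e.\ $x$ and $W_x$-a.e.\ $\gamma\in\Gamma_x$, $\frac1t\log\lvert h'_{\gamma,t}\rvert\to\lambda$.

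The main obstacle is the integrability/uniformity step: one must be careful that the passage from the intrinsic Poincar\'e metric on the leaves to the ambient transversal coordinates, and from path length to holonomy derivative, is controlled uniformly over $M$. This is exactly where compactness of $U(\FF)$ (Theorem~\ref{compactness}) and the continuous dependence of $\alpha_{jj'}(\cdot,t')$ on $t'$ in the $C^\infty$ topology are used; once these uniform bounds are in place, Kingman applies verbatim. A secondary point to handle cleanly is reconciling the a priori only subadditive cocycle with the desired two-sided limit, but this is standard and costs nothing thanks to compactness of $M$ giving two-sided bounds on the metric distortion.
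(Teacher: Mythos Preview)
Your proposal is correct and follows essentially the same route as the paper: apply an ergodic theorem to the shift semigroup $(\Gamma,(\sigma_t)_{t\ge0},\bar\mu)$, using compactness of $M$ for the integrability and finiteness of the limit. The only minor difference is that the paper invokes Birkhoff rather than Kingman: since $N_\FF$ is a \emph{line} bundle and $\lvert\cdot\rvert$ is a metric on it, the derivative $h'_{\gamma,t}:(N_\FF)_x\to(N_\FF)_{\gamma(t)}$ satisfies $\lvert h'_{\gamma,t+s}\rvert=\lvert h'_{\sigma_t\gamma,s}\rvert\cdot\lvert h'_{\gamma,t}\rvert$ exactly, so $a(t,\gamma)=\log\lvert h'_{\gamma,t}\rvert$ is a genuine additive cocycle and your subadditivity detour is unnecessary (though harmless).
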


The proof uses Birkhoff's ergodic theorem for  $(\Gamma, (\sigma_t)_{t \geq 0} , \bar \mu)$. The compactness of $M$ ensures that the exponent is finite and does not depend on $\vert \cdot \vert$. However, it  depends on $m$ \emph{via} the Wiener measure $W_x$ on the leaves.

\subsection{Cohomological expression of the Lyapunov exponent}

The following observation gives the relation between harmonic measures and harmonic currents, see \cite{Ghys2} for more details.  

\begin{remark} \label{l:decomposition}
Given a  metric $m$ on the tangent bundle $T_\FF$, there is a bijection between the projective classes of harmonic currents on $M$ and the harmonic measures supported on $M$ given by $ \mu \mapsto T$, where $T$ is the unique current such that $\mu = T \wedge \mathrm{vol}_m$. 
\end{remark} 
 
Now we can state a fundamental formula. 

\begin{prop}\label{gogo} Let $M$ be a minimal set or a Levi-flat hypersurface in a compact complex surface $S$. Let $\mu$ be an ergodic harmonic measure and $T$ be the unique harmonic current such that $\mu = T \wedge \mathrm{vol}_m$. Then the Lyapunov exponent of $\mu$ for the metric $m$ is equal to 
\[  \lambda  =  - 2 \pi \ T \cdot N_\FF .  \]
\end{prop}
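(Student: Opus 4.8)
The plan is to compute the asymptotic $\frac{1}{t}\log|h'_{\gamma,t}|$ in two ways: directly via Birkhoff's ergodic theorem applied to the flow $(\Gamma,(\sigma_t),\bar\mu)$, and via the cohomological intersection $T\cdot N_\FF$. The bridge between the two is a suitable additive cocycle built from a metric on $N_\FF$. First I would fix a smooth metric $|\cdot|$ on $N_\FF$ with Chern curvature $\omega_h\in A^{1,1}_\FF$, so that by definition $T\cdot N_\FF = T(\omega_h)$. In a foliated chart $U_j\simeq \mathbb D\times F_i$ the normal bundle has leafwise-constant transition functions $d\beta_{jj'}/dt'$, and the holonomy derivative $h'_{\gamma,t}$ is precisely the transport of the vector $1\in N_{\FF,x}$ to $N_{\FF,\gamma(t)}$ read in local trivializations; hence $\log|h'_{\gamma,t}|$ equals $\log|1|_{\gamma(t)} - \log|1|_{x}$ plus the integral along $\gamma$ of the logarithmic derivative of the metric, i.e. of a leafwise $1$-form whose $\bar\partial$ (or $i\partial\bar\partial$) recovers $\omega_h$.

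The key step is then to write, for $\gamma\in\Gamma_x$,
\[
\log|h'_{\gamma,t}| \;=\; u(\gamma(t)) - u(x) \;+\; \int_0^t \Phi(\gamma(s))\,ds \;+\; (\text{martingale term}),
\]
where $u\in C^\infty_\FF$ is the function $\log|1_j|$ glued up using the trivializations, and $\Phi$ is the leafwise function determined by $\Delta_m u = $ (a multiple of) the curvature density, more precisely $2i\partial\bar\partial u = (\text{density of }\omega_h)\cdot \mathrm{vol}_m$ away from where $u$ is only locally defined, so that $\Phi = \tfrac12\Delta_m u$ up to the normalization $\omega_h = \frac{1}{2i\pi}\partial\bar\partial\log h_j^2$. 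Applying Itô's formula to $u$ along leafwise Brownian motion, the bounded term $u(\gamma(t))-u(x)$ and the martingale term both contribute $0$ after dividing by $t$ and letting $t\to\infty$ ($W_x$-a.s., using compactness of $M$ and bounded geometry, so $u$ and $\nabla u$ are bounded and the martingale has $o(t)$ growth). Birkhoff's ergodic theorem for $(\Gamma,(\sigma_t),\bar\mu)$ then gives, for $\mu$-a.e. $x$ and $W_x$-a.e. $\gamma$,
\[
\lambda \;=\; \lim_{t\to\infty}\frac1t\int_0^t \Phi(\gamma(s))\,ds \;=\; \int_M \Phi\, d\mu.
\]
Finally, by Remark \ref{l:decomposition}, $\mu = T\wedge \mathrm{vol}_m$, and $\int_M \Phi\,d\mu = \int_M (\tfrac12\Delta_m u)\, T\wedge\mathrm{vol}_m = T(i\partial\bar\partial u)$; tracking the factor $\frac{1}{2i\pi}$ in the definition of $\omega_h$ turns this into $-2\pi\, T(\omega_h) = -2\pi\, T\cdot N_\FF$. (The sign comes from the convention $\log^-$/ the direction of holonomy versus the sign in the curvature formula; I would pin it down on the model flat bundle where $\lambda$ is computable by hand.)

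The main obstacle is the careful bookkeeping in the middle step: $u$ is not a globally defined function on $M$ — only its exterior derivative, or rather the associated $(1,1)$-form $\omega_h$, is global — so one must patch the local potentials $\log|1_j|$ using a partition of unity (exactly as in the proof of Proposition \ref{blowup1}), check that the resulting leafwise $1$-form is the transgression of $\omega_h$, and verify that the corrections introduced by the partition of unity are bounded leafwise-smooth functions whose Brownian time-averages vanish, so they do not affect $\lambda$. Equivalently one must justify that the Lyapunov cocycle $\log|h'_{\gamma,t}|$ is cohomologous, as an additive cocycle over the Brownian flow, to the additive cocycle $\int_0^t \Phi\circ\sigma_s\, ds$, which is the content of the Furstenberg–Kesten/Birkhoff reduction. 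Once this identification is secured, the remaining computation of $T(i\partial\bar\partial u)=T\cdot N_\FF$ and the constant $-2\pi$ is routine, using that $T$ is a harmonic current (so $T(\partial\bar\partial\psi)=0$ for genuinely global $\psi$, which is what lets the partition-of-unity corrections drop out at the cohomological level) and the normalization $2i\partial\bar\partial = \Delta_m\cdot\mathrm{vol}_m$ from subsection \ref{calculus}.
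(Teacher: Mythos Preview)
Your approach is essentially the paper's: reduce to the integral formula $\lambda=\int_M \Delta_m\log|\cdot|\,d\mu$ for a metric $|\cdot|$ on $N_\FF$, then use $\mu=T\wedge\mathrm{vol}_m$ and $2i\partial\bar\partial=\Delta_m\cdot\mathrm{vol}_m$ to rewrite this as $-2\pi\,T(\omega_h)=-2\pi\,T\cdot N_\FF$. The paper simply cites \cite{DK}, theorem~2.10 for the integral formula; you are rederiving it via It\^o/Dynkin, which is fine and is in fact how \cite{DK} proceeds.

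Your main worry---that $u=\log h_j$ is only locally defined and must be patched with a partition of unity---is unnecessary, and this is where your displayed decomposition becomes muddled (as written it double-counts: locally $\log|h'_{\gamma,t}|$ \emph{equals} $u(\gamma(t))-u(x)$, and It\^o then converts this into the integral plus martingale). The point you are missing is that the transition functions of $N_\FF$ are \emph{leafwise constant} (they are $d\beta_{jj'}/dt'$), so although the potentials $\log h_j$ are not global, both their leafwise gradient and their leafwise Laplacian $\Delta_m\log h_j$ \emph{are} globally well-defined on $M$. Hence It\^o's formula applied in each chart glues to the global identity
\[
\log|h'_{\gamma,t}| \;=\; \int_0^t \bigl(\Delta_m\log|\cdot|\bigr)(\gamma(s))\,ds \;+\; M_t,
\]
with $M_t$ a martingale of bounded quadratic variation rate; dividing by $t$ and invoking Birkhoff and the martingale law of large numbers gives the formula directly, with no partition-of-unity bookkeeping and no appeal to Proposition~\ref{blowup1}. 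The sign and the factor $2\pi$ then fall out of $\omega_h=\frac{1}{2i\pi}\partial\bar\partial\log h_j^2$ exactly as you indicate.
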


The proof is based on the formula \[ \lambda =  \int_M 
 \Delta_m \log \vert \cdot \vert  \,  d\mu, \] 
 where $\vert \cdot \vert$ is any metric on $N_\FF$ (see \cite{DK}, theorem 2.10). That expression comes from the heat equation  $\Delta_m = {\partial \over \partial t}$ and the ergodicity of the generator $\Delta_m$. 
The conclusion follows from the relation $ 2i \partial \overline{\partial} = \Delta_m \cdot \mathrm{vol}_m$. 

\subsection{Application} 

\begin{prop}  
\label{lyapgentype} 
Let  $M$ be a minimal set or a Levi-flat hypersurface of class $C^1$ in a surface of general type $S$, endowed with the Poincar\'e metric on the leaves.  Then the Lyapunov exponent of any ergodic harmonic measure $\mu$ for the Poincar\'e metric satisfies $\lambda > -1$.
\end{prop}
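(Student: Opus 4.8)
The plan is to express the Lyapunov exponent cohomologically via Proposition~\ref{gogo} as $\lambda = -2\pi\,T\cdot N_\FF$, where $T$ is the harmonic current associated to $\mu$ by the Poincar\'e metric, and then to bound $T\cdot N_\FF$ using the adjunction formula together with the negativity of $K_S$ on the leaves. First I would invoke Proposition~\ref{adju} to write $N_\FF = K_\FF - {K_S}_{\,\vert\FF}$, hence $T\cdot N_\FF = T\cdot K_\FF - T\cdot {K_S}_{\,\vert\FF} = -\chi(T) - T\cdot{K_S}_{\,\vert\FF}$, recalling that $\chi(T) := T\cdot T_\FF$ and $K_\FF$ is dual to $T_\FF$. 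So the identity to exploit is
\[ \lambda \;=\; -2\pi\,T\cdot N_\FF \;=\; 2\pi\big(\chi(T) + T\cdot {K_S}_{\,\vert\FF}\big). \]

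Next I would handle the two terms on the right. For $\chi(T) = T\cdot T_\FF$: since the leaves carry the Poincar\'e metric, which has constant curvature $-1$, the Chern curvature of the induced metric on $T_\FF$ is (up to the normalizing factor) a negative multiple of $vol_m$; integrating against $T$ and using $\mu = T\wedge vol_m$ a probability measure, one gets $\chi(T) = -\frac{1}{2\pi}\int_M vol_m\text{-related quantity}$, and the constant-curvature $-1$ normalization pins this down to $\chi(T) = -\frac{1}{2\pi}$. (Concretely: $\omega_{h}$ for the Poincar\'e metric on $T_\FF$ equals $-\frac{1}{2\pi}\cdot\frac12\Delta_m(\cdot)\cdot vol_m$-type expression with curvature $-1$, so $T\cdot T_\FF = -\frac{1}{2\pi}$ since $\mu$ has total mass one.) For the second term, $S$ being of general type means $K_S$ carries a metric of non-negative --- indeed positive away from a finite set of curves $\mathcal C$ --- curvature; by Proposition~\ref{hyphyp} the curves of $\mathcal C$ are not leaves of $\FF$, so the local structure of the harmonic current (positive harmonic density times a transverse measure) gives $T\cdot {K_S}_{\,\vert\FF} \geq 0$, and in the non-minimal case one argues as in the proof of Proposition~\ref{hyphyp}, splitting $K_{S'} = \pi^*K_S + E$ and using Proposition~\ref{blowup1} for the exceptional part. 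Combining, $\lambda = 2\pi\chi(T) + 2\pi\,T\cdot{K_S}_{\,\vert\FF} \geq 2\pi\cdot(-\frac{1}{2\pi}) + 0 = -1$, and the inequality is strict because $T\cdot{K_S}_{\,\vert\FF} > 0$: the transverse measure of $T$ cannot be entirely concentrated on the (finitely many, non-leaf) curves where the curvature degenerates, so some leaf region where the curvature is strictly positive gets positive mass.

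The main obstacle I anticipate is the strictness: showing $T\cdot {K_S}_{\,\vert\FF} > 0$ rather than merely $\geq 0$. The point is that a harmonic current supported on $M$ has a positive leafwise harmonic density $H_j(z,t)$ against a transverse measure $\nu_j$, and $M$ is not contained in the union $\mathcal C$ of the bad curves (it is a hypersurface, or a minimal set disjoint from $\mathcal E$), so the positive-curvature region of $K_S$ meets $M$ in a set of positive $\mu$-measure; pairing $T$ with the positive curvature form there yields a strictly positive contribution. One must be slightly careful to choose the metric on $K_S$ (pullback of Fubini--Study under the pluricanonical map, as recalled before Proposition~\ref{hyphyp}) so that its curvature is genuinely positive off $\mathcal C$, and to check that the blow-up bookkeeping in the non-minimal case preserves strict positivity of the first term in the analogue of equation~(\ref{kol}). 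The computation $\chi(T) = -1/(2\pi)$ is routine once the curvature normalization of the Poincar\'e metric (footnote: gaussian curvature $-1$) is fed into the definition of $\omega_h$ for $T_\FF$.
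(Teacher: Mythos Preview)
Your proposal is correct and follows essentially the same route as the paper: express $\lambda = -2\pi\,T\cdot N_\FF$ via Proposition~\ref{gogo}, apply the adjunction formula $K_\FF = {K_S}_{\,\vert\FF} + N_\FF$ to get $\lambda = 2\pi\,\chi(T) + 2\pi\,T\cdot{K_S}_{\,\vert\FF}$, compute $\chi(T) = -1/(2\pi)$ from the constant curvature $-1$ of the Poincar\'e metric, and conclude by showing $T\cdot{K_S}_{\,\vert\FF}>0$ using the positively curved metric on $K_S$ off $\mathcal C$ (resp.\ the blow-up decomposition and Proposition~\ref{blowup1} in the non-minimal case). Your anticipation of the strictness issue and its resolution also match the paper's argument.
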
 

\begin{proof}
Let $T$ be the unique harmonic current such that $\mu = T \wedge \mathrm{vol}_m$. Then, by proposition~\ref{gogo} and adjunction formula (proposition \ref{adju}), we have
 \[  \lambda = -2\pi \, T \cdot T_\FF +   2\pi  \,  T \cdot  {K_S}_{\, \vert \FF}  = -1 + 2\pi  \,  T \cdot  {K_S}_{\, \vert \FF}  \]
If $ds^2 = \rho^2 \vert dz \vert^2$ denote the Poincar\'e metric, then
\[ T \cdot T_\FF = T({1\over 2i\pi} \partial \bar \partial \log \rho^2) = T(  -{1\over 2\pi} \Delta_m  \log \rho  \cdot \mathrm{vol}_m) = - {1 \over 2\pi} , \]
since the gaussian curvature of the metric is equal to $-1$. It remains to prove that $T \cdot  {K_S}_{\, \vert \FF}$ is positive. If $S$ is a minimal surface, then $K_S$ has a metric of positive curvature outside $\mathcal C$,  and these curves are not leaves of $\FF$. That implies $T \cdot  {K_S}_{\, \vert \FF} > 0$ from the local structure of harmonic currents. For the non minimal case, we follow the arguments of theorem \ref{hyphyp}, replacing foliated cycles by harmonic currents. If $\pi : S' \to S$ is a blowup of a minimal surface of general type, then   
\begin{equation}\label{kol2}
 T \cdot  {K_{S'}}_{\, \vert \FF}    =  T \cdot  (\pi^* K_S)_{\, \vert \FF} +  T \cdot  E_{\, \vert \FF}  . 
\end{equation}
As before, $\pi^* K_{S}$ has a metric of positive curvature outside $E$ and the proper transforms of $\mathcal C$, which are not leaves of $\FF$. The first term of the right side of (\ref{kol2}) is positive from the local structure of harmonic currents. The second term is non negative by proposition \ref{blowup1}. \end{proof}

\subsection{Remark} To end this section, we recall a formula for the Lyapunov exponent on exceptional minimal sets in $\mathbb P^2$, see \cite{DK}. They are hyperbolic by \cite{CLS}, we endow the leaves with the Poincar\'e metric. 

\begin{prop}  \label{p: plane lyapunov exponent} 
Let $\mathcal F$ be a singular holomorphic foliation of $\mathbb P^2$ of degree $d \geq 2$. Let $M$ be a hypothetical  minimal set. Then the Lyapunov exponent of its harmonic measure for the Poincar\'e metric is equal to  \[  \lambda =  -    \frac{d+2}{d-1} . \]
\end{prop}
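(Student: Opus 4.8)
The plan is to apply the cohomological formula for the Lyapunov exponent from Proposition~\ref{gogo} together with the adjunction formula, exactly as in the proof of Proposition~\ref{lyapgentype}, but now taking advantage of the fact that on $\mathbb{P}^2$ all the relevant cohomology classes are multiples of the hyperplane class $H$. First I would recall that for a degree $d$ singular holomorphic foliation $\mathcal{F}$ of $\mathbb{P}^2$ one has $N_{\mathcal F} = \mathcal{O}(d+2)$ and $K_{\mathcal F} = T_{\mathcal F}^{*} = \mathcal{O}(d-1)$ (equivalently $T_{\mathcal F} = \mathcal{O}(1-d)$); these are the standard numerical invariants of a foliation on $\mathbb{P}^2$, and $K_{\mathbb{P}^2} = \mathcal{O}(-3)$, consistent with the adjunction formula $K_{\mathcal F} = {K_{\mathbb{P}^2}}_{|\mathcal F} + N_{\mathcal F}$.

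Next I would use that since $M$ is a minimal set, it carries a unique harmonic current $T$ (by \cite{FS1,FS2} for $\mathbb{P}^2$, or \cite{DK} since $M$ has no transverse invariant measure — minimal sets in $\mathbb{P}^2$ support none), with $\mu = T \wedge \mathrm{vol}_m$ for the Poincar\'e metric $m$. Normalize $T$ so that $T \cdot H = 1$, i.e.\ $[T] = c[H]$ with $T(\omega_{FS}) = 1$ after rescaling; then for any line bundle $\mathcal{O}(k)$ on $\mathbb{P}^2$ we get $T \cdot \mathcal{O}(k) = k \cdot (T \cdot H)$. The key computation is then two evaluations of $T \cdot N_{\mathcal F}$: on one hand Proposition~\ref{gogo} gives $\lambda = -2\pi\, T \cdot N_{\mathcal F}$; on the other hand, writing the adjunction formula $N_{\mathcal F} = K_{\mathcal F} - {K_{\mathbb{P}^2}}_{|\mathcal F} = T_{\mathcal F}^{*} + \mathcal{O}(3)_{|\mathcal F}$ and pairing with $T$, and using that $T \cdot T_{\mathcal F} = -\tfrac{1}{2\pi}$ for the Poincar\'e metric (curvature $-1$, as computed in Proposition~\ref{lyapgentype}), I can solve for the normalization $T \cdot H$ and hence for $\lambda$.

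Concretely: from $N_{\mathcal F} = \mathcal{O}(d+2)$ we have $T \cdot N_{\mathcal F} = (d+2)\, (T\cdot H)$, so $\lambda = -2\pi (d+2)(T \cdot H)$. From $T_{\mathcal F} = \mathcal{O}(1-d)$ we have $T \cdot T_{\mathcal F} = (1-d)(T \cdot H) = -\tfrac{1}{2\pi}$, hence $T \cdot H = \tfrac{1}{2\pi(d-1)}$. Substituting gives $\lambda = -2\pi(d+2)\cdot \tfrac{1}{2\pi(d-1)} = -\tfrac{d+2}{d-1}$, which is the claimed value. The main obstacle — really the only nontrivial input beyond bookkeeping — is justifying that the harmonic current exists, is unique, and has a well-defined cohomology class that can be normalized against $H$: this rests on the results of \cite{FS1,FS2,DK} quoted earlier and on the fact that hyperbolicity of $M$ (from \cite{CLS}) lets us put the Poincar\'e metric on the leaves so that $T \cdot T_{\mathcal F}$ is computed by the curvature $-1$. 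Everything else is the identification of the numerical invariants $N_{\mathcal F}$ and $T_{\mathcal F}$ of a degree-$d$ foliation of $\mathbb{P}^2$, which is classical.
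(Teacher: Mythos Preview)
Your proof is correct and follows essentially the same route as the paper: both use the numerical identifications $N_{\mathcal F}=\mathcal O(d+2)$, $T_{\mathcal F}=\mathcal O(1-d)$, the cohomological formula $\lambda=-2\pi\,T\cdot N_{\mathcal F}$, and the normalization $T\cdot T_{\mathcal F}=-\tfrac{1}{2\pi}$ coming from curvature $-1$. The only cosmetic difference is that you pass through the intermediate value $T\cdot H=\tfrac{1}{2\pi(d-1)}$, whereas the paper writes the ratio $T\cdot N_{\mathcal F}=-\tfrac{d+2}{d-1}\,T\cdot T_{\mathcal F}$ directly.
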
 
The proof relies on $N_{\mathcal F} = \mathcal O(d+2)$ and $K_{\mathcal F} = \mathcal O(d-1)$, see \cite{Brunella1}. Then 
 \[ \lambda = -2\pi \,  T \cdot N_\FF = 2\pi \, {d+2 \over d-1} \, T \cdot T_\FF \]
and the expected value follows. Observe that the exponent of a holomorphic foliation of $\mathbb P^2$ is fixed by its degree. The situation is different for rational maps on $\mathbb P^1$, where the exponent of the maximal entropy measure depends on the map.

\section{Anosov Levi-flat}\label{anoso}

We call a Levi-flat \textit{Anosov} if its CR foliation is topologically conjugated to the weak stable foliation of an Anosov flow on a compact manifold. 

\begin{thm}\label{generesult}
Let $M$ be an Anosov Levi-flat. Then $\FF$ has no transverse invariant measure, in particular its CR foliation is hyperbolic. If we endow the leaves with the Poincar\'e metric then the Lyapunov exponent of any ergodic harmonic measure $\mu$ satisfies $\lambda \leq -1$.
\end{thm}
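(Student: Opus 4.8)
\textbf{Proof proposal for Theorem \ref{generesult}.}

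The plan is to treat the two assertions separately, with the bulk of the work going into the Lyapunov exponent inequality. First I would settle the absence of a transverse invariant measure. A transverse invariant measure for the CR foliation of $M$ would give, via topological conjugacy, a transverse invariant measure for the weak stable foliation of an Anosov flow $\phi_t$. Such a measure, being holonomy invariant along both the flow direction and the strong stable direction, produces in particular a finite measure on $M$ invariant under the strong unstable holonomy; but the strong unstable foliation of an Anosov flow has exponential expansion, so its holonomy cannot preserve a finite transverse measure unless it is trivial — which would contradict the Anosov hypothesis. (Concretely: a transverse invariant measure for the weak stable foliation is the same as a flow-invariant transverse measure to the strong stable foliation, and this forces the existence of a $\phi_t$-invariant probability measure that is ``uniformly expanded'' by the flow in the unstable direction, an impossibility by the Poincaré recurrence / volume-growth argument.) Once there is no transverse invariant measure, hyperbolicity of $\FF$ follows immediately from the discussion in subsection~\ref{FC}: without a transverse invariant measure there is no Ahlfors current, hence no parabolic leaf, and elliptic leaves are excluded as they would produce compact leaves hence foliated cycles.

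For the exponent bound, the strategy is to compare the Brownian Lyapunov exponent $\lambda$ — which by Theorem~\ref{limexplya} controls the exponential rate at which nearby leaves approach along a typical Brownian path — with the expansion/contraction rates of the Anosov flow transported through the hyperbolic uniformization. The key geometric input, flagged in the introduction as the ``stretching'' construction, is that the weak stable leaves of an Anosov flow, when equipped with the Poincaré metric of their conformal structure, have the flow orbits running ``straight to a point at infinity'': each leaf $L$ is a hyperbolic surface, the flow direction is (uniformly quasi-)geodesic, and all orbits on a given leaf are asymptotic to a single boundary point $\xi(L)\in\partial\mathbb D$. This is what the paper calls a \emph{point at infinity}. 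I would first establish this: use the Verjovsky--Candel compactness (Theorem~\ref{compactness}) to get a continuous family of uniformizations $\pi_L:\mathbb D\to L$, push the flow up to $\mathbb D$, and show via the uniform hyperbolicity of $\phi_t$ (strong stable direction contracted, so the flow line has bounded geodesic curvature after reparametrization by Poincaré arclength) that flow lines are quasigeodesics converging to a common endpoint.

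Given the point at infinity, the exponent estimate becomes a Margulis--Ruelle type inequality. The mechanism: the transverse (normal) direction to $\FF$ along a flow orbit is, after conjugacy, the strong unstable direction of $\phi_t$, which is \emph{expanded} by the flow; equivalently the normal direction is contracted as one runs the flow \emph{backwards}, i.e. as one moves toward the point at infinity $\xi(L)$. Now a Brownian path on a hyperbolic surface almost surely escapes to the boundary $\partial\mathbb D$, and — this is the crux — it converges to $\xi(L)$ with positive speed, in fact the Poincaré displacement grows linearly with slope governed by the bottom of the spectrum, which for the curvature $-1$ disc gives speed $1$. Along such a path the normal holonomy $|h'_{\gamma,t}|$ is estimated by the amount of flow time accumulated in the direction of $\xi(L)$, and the ``point at infinity with unstable normal direction'' forces $\tfrac1t\log|h'_{\gamma,t}|\le -1 + o(1)$: the worst case $-1$ is exactly the Brownian escape speed toward the distinguished boundary point times the unit exponential contraction rate of the normal bundle along the backward flow. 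Making this rigorous is where I expect the main obstacle: one must control the fluctuations of the Brownian path relative to the orbit foliation (a shadowing/synchronization estimate in the hyperbolic metric), and handle the reparametrization between flow time and Brownian/Poincaré time, probably via the ergodic theorem applied to the foliated flow $(\Gamma,(\sigma_t),\bar\mu)$ together with the cohomological formula $\lambda=-2\pi\,T\cdot N_\FF$ of Proposition~\ref{gogo} — comparing the current $T$ against the closed current carried by the flow orbits asymptotic to the points at infinity. The inequality $\lambda\le-1$ is then the statement that $T\cdot N_\FF\ge \tfrac1{2\pi}$, i.e. $T\cdot T_\FF - T\cdot N_\FF \le -\tfrac1{2\pi} - (\text{something}\ge 0)$, reflecting that the adjunction-type defect is nonnegative precisely because of the expanding normal direction.
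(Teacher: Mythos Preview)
Your treatment of the first assertion and of the ``point at infinity'' construction is essentially what the paper does: the absence of a transverse invariant measure is a short volume-contraction argument (the paper phrases it as ``a leafwise volume form is uniformly contracted by the Anosov flow, so its product with a transverse invariant measure would be a finite measure exponentially contracted by a flow''), and the stretching lemma is indeed proved via quasi-geodesicity of the flow lines plus Verjovsky--Candel compactness.

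The gap is in the final step. Your mechanism for $\lambda\le -1$ rests on the claim that a leafwise Brownian path ``converges to $\xi(L)$ with positive speed''. This is false: Brownian motion on $\mathbb D$ exits at a \emph{random} boundary point distributed according to harmonic measure, and hits the single point $\xi(L)$ with probability zero. Relatedly, the phrase ``unit exponential contraction rate of the normal bundle along the backward flow'' misidentifies the source of the constant $-1$: the Anosov rates are arbitrary and have disappeared once you pass to the intrinsic geodesic flow $\psi$ toward $\xi$. The $-1$ comes purely from hyperbolic-plane geometry.

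The paper's actual argument is this. Having built the continuous flow $\psi^t$ whose trajectories are unit-speed Poincar\'e geodesics toward $\xi$, one computes its Jacobian with respect to $\nu=\theta\otimes\mathrm{vol}_P$ (a transverse length times leafwise Poincar\'e area). The transverse factor is $|h'_{(\psi^{-s}(x))_{0\le s\le t}}|$; the \emph{leafwise} factor is exactly $e^t$, because in the upper half-plane model the flow is $x+iy\mapsto x+ie^t y$ and $\mathrm{vol}_P=dx\,dy/y^2$. So $\dfrac{\psi^t_*\nu}{\nu}=e^t\cdot|h'|$. Next one shows, by shadowing Brownian paths with geodesics (Ancona's theorem plus the drift computation $\lim t^{-1}d_P(0,\gamma(t))=1$), that the holonomy exponent along $\psi$-trajectories agrees with the Brownian Lyapunov exponent $\lambda$ on a set $B_\mu$ of positive $\nu$-measure. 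Then $\int_{B_\mu}\frac{\psi^t_*\nu}{\nu}\,d\nu\le\nu(M)<\infty$ while the integrand is $\approx e^{(1+\lambda)t}$ on $B_\mu$, forcing $1+\lambda\le 0$. This is the Margulis--Ruelle flavor you correctly invoke, but it runs through the \emph{deterministic} flow $\psi$, not directly through the Brownian path, and the Brownian motion enters only to transfer the exponent $\lambda$ onto the $\psi$-trajectories. Your cohomological reformulation ``$T\cdot N_\FF\ge\tfrac{1}{2\pi}$ via an adjunction-type defect'' does not obviously lead anywhere without this Jacobian/volume input.
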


The absence of transverse invariant measure is based on the Anosov property. For the Lyapunov exponent, we prove that we can stretch the trajectories of the Anosov flow in the uniformizations, that allows to construct a continuous flow whose orbits are leafwise geodesics for the Poincar\'e metric. The resulting flow is called a \emph{point at infinity}. The bound on $\lambda$ is proved in that more general context, it relies on volume estimates in the spirit of Margulis-Ruelle's inequality.

\begin{cor}\label{c:no Anosov levi-flat}
A $C^1$ Levi-flat in a surface of general type is not Anosov. 
\end{cor}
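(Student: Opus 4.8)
The plan is to combine Theorem~\ref{generesult} with Proposition~\ref{lyapgentype} to produce a contradiction. Suppose for contradiction that $M$ is a $C^1$ Levi-flat in a surface of general type $S$ which is Anosov. Since $M$ is Anosov, Theorem~\ref{generesult} tells us two things: first, $\FF$ carries no transverse invariant measure, hence by Proposition~\ref{hyphyp} (already available since $M$ is $C^1$ in a surface of general type) the CR foliation is hyperbolic, so we may endow every leaf with the Poincar\'e metric; second, for this Poincar\'e metric, the Lyapunov exponent $\lambda$ of any ergodic harmonic measure $\mu$ on $M$ satisfies $\lambda \leq -1$.

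On the other hand, Proposition~\ref{lyapgentype} applies verbatim to our situation: $M$ is a $C^1$ Levi-flat hypersurface in a surface of general type $S$, endowed with the Poincar\'e metric on the leaves, so the Lyapunov exponent of any ergodic harmonic measure $\mu$ satisfies $\lambda > -1$. To invoke both statements about the same object I first need to know that there exists at least one ergodic harmonic measure on $M$: this is guaranteed by Garnett's theory (harmonic measures exist by Hahn--Banach or Markov--Kakutani, the set of them is convex and compact, and its extreme points are ergodic), as recalled in Section~\ref{BMLE}. Fixing such a $\mu$ and its Lyapunov exponent $\lambda$ for the Poincar\'e metric, we get simultaneously $\lambda \leq -1$ and $\lambda > -1$, a contradiction. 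Hence no $C^1$ Levi-flat in a surface of general type can be Anosov.

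There is essentially no obstacle here: the corollary is a direct consequence of the two preceding results, and the only point requiring a word of care is the compatibility of hypotheses — namely that the hyperbolicity needed to speak of the Poincar\'e metric is provided by Proposition~\ref{hyphyp} (valid in the $C^1$ category) and is also reasserted inside Theorem~\ref{generesult} via the absence of a transverse invariant measure, so the Poincar\'e metric used in both statements is the same one. All the genuine work — stretching Anosov trajectories to build a point at infinity and the Margulis--Ruelle type volume estimate giving $\lambda \leq -1$, as well as the curvature/adjunction argument giving $\lambda > -1$ — has already been done in Theorem~\ref{generesult} and Proposition~\ref{lyapgentype} respectively.

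\begin{proof}
By Proposition~\ref{hyphyp}, the CR foliation $\FF$ of $M$ is hyperbolic, so we endow each leaf with its Poincar\'e metric. By Garnett's theory there exists an ergodic harmonic measure $\mu$ on $M$; let $\lambda$ be its Lyapunov exponent for the Poincar\'e metric. On one hand, Proposition~\ref{lyapgentype} gives $\lambda > -1$. On the other hand, if $M$ were Anosov, Theorem~\ref{generesult} would give $\lambda \leq -1$. These are incompatible, hence $M$ is not Anosov.
\end{proof}
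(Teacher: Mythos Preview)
Your proof is correct and matches the paper's own argument essentially line for line: assume an Anosov Levi-flat, pick an ergodic harmonic measure, and derive the contradictory bounds $\lambda>-1$ from Proposition~\ref{lyapgentype} and $\lambda\le -1$ from Theorem~\ref{generesult}. Your version is slightly more explicit about invoking Proposition~\ref{hyphyp} and Garnett's theory to justify the existence of the Poincar\'e metric and of an ergodic harmonic measure, but the approach is identical.
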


\begin{proof}
We use Lyapunov exponents. Suppose that there exists such a Levi-flat, let $\mu$ be an ergodic harmonic measure and $\lambda$ be the Lyapunov exponent of $\mu$. On one hand, theorem \ref{lyapgentype} states that $\lambda > -1$ because the surface is of general type. On the other hand, theorem \ref{generesult} asserts that $\lambda \leq -1$. 
\end{proof}

We apply this result in section~\ref{s:sol PSL} to the weak stable foliation of the geodesic flow on the unit tangent bundle of negatively curved surfaces and to the weak stable foliation of the suspension of  Anosov diffeomorphisms of the $2$-torus (hyperbolic torus bundles). Theorem~\ref{generesult} is of independant interest since a lot of Anosov flows were constructed on $3$-manifolds, including hyperbolic or graph manifolds \cite{BF, FW, Goodman, HT}. We also point out that there is no requirement  on the CR structure of the foliation, this is important since there is an infinite number of moduli of CR structures~\cite{Dnonrigidity}. 

\subsection{Proof of theorem~\ref{generesult}}
We say that a Levi-flat $M$ has a \textit{point at infinity} if it is hyperbolic and if it supports a continuous flow $\psi : M\times \mathbb R \rightarrow M$ tangent to the CR foliation which lifts (on the universal covering $\mathbb D\rightarrow L$ of every leaf) to a flow whose trajectories are hyperbolic geodesics (parametrized by arc length) tending to the same point at infinity in $\partial \mathbb D$. 

\begin{lem} [Stretching] \label{l:anosov}
An Anosov Levi-flat has a point at infinity.
\end{lem}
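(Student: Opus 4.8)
The plan is to start from an Anosov flow $\phi^t$ on a closed $3$-manifold $N$ whose weak stable foliation $\mathcal W^{ws}$ is topologically conjugate to the CR foliation $\FF$ of $M$. Transport everything to $M$ via the conjugating homeomorphism, so that we have a continuous flow on $M$ (a priori only topological, not tangent to anything) preserving $\FF$, whose orbits foliate each leaf $L$ by the (images of the) weak stable leaves, i.e. by curves. By Proposition~\ref{hyphyp} every leaf $L$ is hyperbolic; lift the picture to the universal cover $\widetilde L \simeq \mathbb D$. On $\widetilde L$ the flow lines are a one-dimensional foliation of the disc by proper lines. The key geometric input from Anosov dynamics is that nearby weak stable leaves converge exponentially forward in time: in the leaf $L$, two orbits on the same weak stable leaf stay at bounded distance, while orbits on different weak stable leaves within $L$ diverge. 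Lifting to $\mathbb D$, each flow line should therefore be a proper ray whose two ends have well-defined endpoints in $\partial\mathbb D$, and the exponential contraction forces \emph{all} the flow lines in a given $\mathbb D$ to share one common endpoint $\xi \in \partial \mathbb D$ (the ``attracting'' direction), while their other endpoints vary. This is the point at infinity.

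Concretely, the steps I would carry out are: (1) Record the conjugated flow on $M$ and note it is continuous, $\FF$-tangent, with no fixed points, and that on each leaf its orbits form a foliation by lines (the $\FF$ is covered by flow boxes compatible with this). (2) Prove, using the exponential convergence of weak stable leaves under the Anosov flow and the uniform comparison between the Poincar\'e metric and any fixed background metric (Theorem~\ref{compactness}, continuity/compactness of uniformizations), that in each uniformization $\mathbb D \to L$ the lifted flow lines are quasigeodesics; in particular each has two well-defined endpoints on $\partial\mathbb D$, and the family of lifted flow lines in a fixed $\mathbb D$ all limit, in positive time, to a single boundary point $\xi$. (3) ``Straighten'' the flow: reparametrize so that each orbit, lifted to $\mathbb D$, is run along the actual hyperbolic geodesic asymptotic to $\xi$ with unit speed. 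Concretely, for $x \in M$ project the orbit of $x$ to its leaf, lift to $\mathbb D$, take the geodesic ray from the lift of $x$ toward $\xi$, parametrized by arc length, and push it back down. One must check this new assignment $(x,s) \mapsto \psi^s(x)$ is well-defined on $M$ (independent of the lift, by invariance of $\xi$ under the deck group acting on that $\mathbb D$) and jointly continuous in $(x,s)$ — here the continuity of the Poincar\'e metric and the compactness of $U(\FF)$ are essential, as is continuity of the map $x \mapsto \xi_x \in \partial\mathbb D$. (4) Conclude that $\psi$ is a flow: the geodesic-toward-$\xi$ parametrization is additive in $s$ because a subray of the geodesic toward $\xi$ is again the geodesic toward $\xi$ from the new basepoint. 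This exhibits the point at infinity and proves the lemma.

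The main obstacle, I expect, is Step (2)–(3): upgrading the \emph{topological} convergence of weak stable leaves to a quantitative statement in the \emph{Poincar\'e} metric that is strong enough to give well-defined, and \emph{continuously varying}, endpoints $\xi_x \in \partial\mathbb D$. The Anosov contraction is exponential in the flow's own time parametrization and with respect to a metric on $N$, which after conjugation is only a topological datum on $M$; one has to compare this with hyperbolic distance inside the leaves, where leaves can be wild and the Poincar\'e metric degenerates near $M$'s transverse directions. Controlling this uniformly — so that the flow lines are genuine quasigeodesics with Morse-stable endpoints, and so that $x\mapsto \xi_x$ is continuous where it must be — is the technical heart of the argument; everything else (no fixed points, the flow property after straightening, $\FF$-tangency) is comparatively formal. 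Once Lemma~\ref{l:anosov} is in place, Theorem~\ref{generesult} follows by the promised ``point at infinity'' estimate ($\lambda \le -1$) plus the Anosov-based absence of a transverse invariant measure, and Corollary~\ref{c:no Anosov levi-flat} is immediate by confronting $\lambda \le -1$ with $\lambda > -1$ from Proposition~\ref{lyapgentype}.
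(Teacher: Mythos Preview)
Your architecture matches the paper's exactly: show the transported Anosov trajectories are quasi-geodesics in the leafwise Poincar\'e metric, extract a common endpoint $\xi\in\partial\mathbb D$ on each leaf, replace each trajectory by the unit-speed geodesic toward $\xi$, and check continuity via compactness of uniformizations (Theorem~\ref{compactness}). You have also correctly located the hard step.

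What you are missing are two concrete ideas that dissolve the obstacle you flag. First, the topological conjugation between $\FF$ and the weak stable foliation can be taken \emph{smooth along the leaves}; so one may simply assume $\FF$ \emph{is} the weak stable foliation of an Anosov flow generated by a vector field $X$ that is leafwise smooth and globally continuous. Then $|X|$ is bounded in the Poincar\'e metric, and the upper quasi-geodesic bound is immediate --- no metric comparison with $N$ is needed. Second, and this is the key point, the lower quasi-geodesic bound comes not from leafwise geometry but from the \emph{transverse holonomy} of $\FF$: the normal bundle to the weak stable foliation is the strong unstable direction, so the holonomy of $\FF$ along a flow segment $\alpha|_{[t,t']}$ expands by at least $e^{\xi(t'-t)}$. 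On the other hand, for a $C^1$ codimension-one foliation there is $\xi'>0$ with $|h'_\gamma|\le e^{\xi'\,\mathrm{length}(\gamma)}$ for any leafwise path $\gamma$. Combining gives $d_P(\alpha(t),\alpha(t'))\ge(\xi/\xi')\,|t'-t|$. This bypasses entirely the problem you anticipate of comparing the Anosov time/metric with the Poincar\'e metric: the Anosov exponential rate is read off \emph{transversally}, where it is intrinsic to the CR foliation on $M$. Once the quasi-geodesic property is established this way, the rest of your outline (common endpoint via forward-asymptoticity of trajectories in a leaf, straightening, continuity of $x\mapsto\xi_x$ via stability of endpoints of uniform quasi-geodesics) goes through as you describe.
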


The proof is postponed to  subsection~\ref{ss:stretching}. The arguments are based on the fact that trajectories of Anosov flows are quasi-geodesics, hence they have a well-defined limit in $\partial \mathbb D$ when lifted to the universal cover. By the Anosov property, this limit does not depend on the trajectory in the leaves. We then consider the flow $\psi$ defined by the geodesics pointing towards this point at infinity. Continuity of the limit point of quasi-geodesics for the compact open topology ensures the continuity of that flow. 

\begin{lem} [Volume estimates] \label{l:pt at infinity}
Let $M$ be a Levi-flat having a point at infinity. We endow the CR foliation with the Poincar\'e metric. Then the Lyapunov exponent of any ergodic harmonic measure satisfies $\lambda \leq -1$. 
\end{lem}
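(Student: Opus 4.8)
The goal is to bound the Lyapunov exponent $\lambda$ of an ergodic harmonic measure $\mu$ for a Levi-flat $M$ with a point at infinity, using the Poincar\'e metric on leaves. The plan is to exploit the flow $\psi$ generated by the point at infinity: its orbits are leafwise geodesics parametrized by arc length, all pointing to a common boundary point $\xi \in \partial\mathbb D$ in the universal cover of each leaf. First I would recall that in $\mathbb D$, the normal derivative of the holonomy along a geodesic ray converging to a fixed boundary point $\xi$ is governed by the Busemann cocycle: roughly, the infinitesimal transverse contraction rate along such a geodesic equals $-1$ exactly, since horocycles centered at $\xi$ are contracted by the geodesic flow at unit rate in the hyperbolic metric. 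The point is that the flow $\psi$ realizes, leafwise, the stable horocyclic structure, so that moving along $\psi$ for time $t$ contracts a transversal by a factor comparable to $e^{-t}$ \emph{up to a bounded multiplicative error coming from the transverse variation of the Poincar\'e metric and the identification of nearby leaves}.

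\textbf{Key steps.} (1) Pick a measure $m_\psi$ on $M$ invariant under the flow $\psi$ — e.g.\ a weak limit of time averages, using compactness of $M$ — and disintegrate the problem so that the ergodic theorem for $\psi$ applies; alternatively, relate the $\psi$-dynamics directly to the leafwise Brownian motion via a change of time/drift. (2) Establish the pointwise estimate: for the flow $\psi$ lifted to $\mathbb D$ along a geodesic towards $\xi$, the derivative of the transverse holonomy $h_{\gamma,t}$ satisfies $\frac{1}{t}\log|h'_{\gamma,t}| \to -1$, as a consequence of the contraction of horocycles and Theorem~\ref{compactness} (continuity/compactness of uniformizations, which controls the bounded error terms in the comparison of the transverse metric at nearby points). (3) Compare the Lyapunov exponent computed along Brownian paths (Theorem~\ref{limexplya}, Proposition~\ref{gogo}) with the contraction computed along the flow: since Brownian motion on $\mathbb D$ has an almost sure angular limit in $\partial\mathbb D$, and since the point at infinity gives a deterministic direction $\xi$, I would use a comparison between the Brownian holonomy cocycle and the geodesic (flow) holonomy cocycle — the radial parts differ by a sublinear amount, so the exponents agree as $\leq$, giving $\lambda \leq -1$. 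Concretely, one writes $\log|h'_{\gamma,t}|$ as an integral of $\Delta_m \log|\cdot|$ type quantity (as in Proposition~\ref{gogo}) and decomposes the Brownian path into a part that tracks the $\psi$-orbit and a bounded transverse fluctuation; the harmonicity of $\mu$ kills the martingale part and the drift contributes exactly the $-1$ from the Busemann function of $\xi$, while any correction from the curvature $K_S|_\FF$ term is of the wrong sign only in general-type surfaces (which is \emph{not} assumed here), so here it can only help or be absorbed.

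\textbf{Main obstacle.} The hard part will be step (3): making rigorous the comparison between the \emph{Brownian} Lyapunov exponent (which is what $\lambda$ is, via Theorem~\ref{limexplya}) and the \emph{deterministic geodesic flow} contraction towards $\xi$. Brownian paths do not follow $\psi$-orbits, so one must argue that the transverse derivative only sees the radial escape towards the common endpoint $\xi$, not the detailed path. The clean way is via the Busemann cocycle: $\log|h'_{\gamma,t}|$ equals (up to bounded error, uniform by Theorem~\ref{compactness}) minus the Busemann function $b_\xi(\gamma(t)) - b_\xi(\gamma(0))$ in the leaf, because the transversal at $x$ can be taken along the $\psi$-orbit (a geodesic to $\xi$) and transverse holonomy contracts horocyclic transversals at the rate of the Busemann function. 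Then $\frac 1t b_\xi(\gamma(t)) \to 1$ for $W_x$-a.e.\ Brownian path, since Brownian motion on $\mathbb D$ drifts to the boundary at unit Busemann speed (the radial drift of hyperbolic Brownian motion is $1$ in these normalizations). Averaging and using ergodicity of $\mu$ yields $\lambda = -\int \frac{d}{dt}\big|_0 b_\xi \, d\mu \le -1$, the inequality rather than equality arising because $b_\xi$ need not be a genuine global function on $M$ (only a cocycle), so one only controls the Ces\`aro limit from one side. I would carry this out by first proving the bounded comparison of transversals using compactness of $U(\FF)$, then invoking the standard fact on the Busemann drift of hyperbolic Brownian motion, and finally integrating against the harmonic measure.
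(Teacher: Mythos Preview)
Your proposal rests on an identity that is neither justified nor true: you claim $\log|h'_{\gamma,t}| = -(b_\xi(\gamma(t)) - b_\xi(\gamma(0))) + O(1)$, where $b_\xi$ is the Busemann function at the common point at infinity. But $b_\xi$ is intrinsic to the hyperbolic geometry of a \emph{single} leaf, whereas $|h'_{\gamma,t}|$ measures how \emph{distinct} leaves move relative to one another under holonomy --- these quantities are unrelated in general, and one can alter the transverse packing of leaves without touching the intrinsic metric of any leaf. Your justification conflates horocycles (curves \emph{inside} a leaf, transverse to the leafwise geodesic flow) with transversals to $\FF$ (arcs in $M$ crossing the leaves); a $\psi$-orbit lies entirely inside a leaf and cannot serve as a transversal to the foliation. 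There is also a sign problem: for a \emph{fixed} $\xi\in\partial\mathbb D$, hyperbolic Brownian motion a.s.\ converges to a random boundary point $\neq\xi$, so $b_\xi(\gamma(t))/t \to -1$ (equivalently $\Delta_m b_\xi \equiv -1$), not $+1$; plugged into your identity this gives $\lambda = +1$, which is absurd. Likewise your step~(2) asserts $\frac{1}{t}\log|h'_{\gamma,t}| \to -1$ along $\psi$-orbits --- but this is essentially the conclusion, not a consequence of horocycle contraction.

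The paper's argument is a Margulis--Ruelle volume estimate and never identifies $\log|h'|$ with a leafwise cocycle. One computes the Radon--Nikodym derivative of $\psi^t_*\nu$ with respect to $\nu = \theta \otimes \mathrm{vol}_P$ and finds it equals $e^t \cdot |h'|$: the factor $e^t$ is the \emph{leafwise} Jacobian (in the upper-half-plane model with $\xi=\infty$, $\psi^t$ acts as $y\mapsto e^t y$ against $\mathrm{vol}_P = dx\,dy/y^2$), and $|h'|$ is the transverse factor. Separately, Ancona's shadowing of geodesics by Brownian paths, together with the unit radial drift, transfers the Brownian exponent to $\psi$-orbits: on a set $B_\mu$ of positive $\nu$-measure one has $\frac{1}{t}\log|h'| \to \lambda$ along the flow. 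Since $\psi^t$ is a homeomorphism of the compact $M$, $\int_M (d\psi^t_*\nu / d\nu)\,d\nu = \nu(M)$ is bounded in $t$; but on $B_\mu$ the density behaves like $e^{t(1+\lambda)}$, forcing $1+\lambda \leq 0$. The horocycle contraction you invoke is exactly what produces the $e^t$ in the \emph{leafwise} Jacobian --- it is not, and cannot be, a formula for the transverse holonomy.
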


The proof is given in subsection~\ref{ss:margulis-ruelle}.  The flow $\psi$ induced by the point at infinity is absolutely continuous, and we relate its Jacobian with the Lyapunov exponent by shadowing geodesics with Brownian trajectories. The bound on the exponent follows from volume estimates in the spirit of Margulis-Ruelle inequality. 


\subsection{Stretching trajectories of an Anosov flow}\label{ss:stretching}
By assumption the foliation is topologically conjugated to the weak stable foliation of an Anosov flow. It is classical that the conjugation can be chosen smooth along the leaves. Hence, with no loss of generality, we can suppose until the end of subsection~\ref{ss:stretching} that $\FF$ is the weak stable foliation $\FF^s$ of an Anosov flow $A: M\times \mathbb R \rightarrow M$. We shall write $t \mapsto A_x(t)$ or $x \mapsto A^t(x)$ when $x$ and $t$ are respectively fixed.  

Let us first observe that $\mathcal F$ has no transverse invariant measure. Indeed, a leafwise volume form on $\mathcal F$ is uniformly exponentially contracted by the Anosov flow. Hence, if there exists a transverse invariant measure on $\mathcal F$, its product with a leafwise volume form is a finite measure which is uniformly and exponentially contracted by the Anosov flow. This contradicts that the total volume is preserved. As a consequence, the weak stable foliation has hyperbolic leaves.

Now we stretch the trajectories of the Anosov flow. Let us endow $T_{\FF}$ with the leafwise Poincar\'e metric. Let $x \in M$ and $\pi : \mathbb D \to L_x$ be a uniformization satisfying $\pi(0) = x$. We define
\[ \forall t \in \mathbb R \ , \ \alpha(t) := \pi^{-1} A_x(t)  ,  \]
which  is well defined by the covering property of $\pi$.

\begin{prop}\label{puol}
There exists a constant $\rho>0$ (independent of $\alpha$) satisfying for every  $(t,t') \in \mathbb R^2$:
\[ \rho^{-1}  \vert t - t' \vert   \leq  d_P ( \alpha(t) , \alpha(t') ) \leq \rho \vert t - t' \vert . \]
\end{prop}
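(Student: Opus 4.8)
The plan is to show that the trajectory $t \mapsto A_x(t)$ is a quasi-geodesic in the leaf $L_x$ equipped with the leafwise Poincaré metric, with quasi-isometry constant uniform over $x \in M$; the stated inequality for $\alpha = \pi^{-1}\circ A_x$ then follows since $\pi : \mathbb D \to L_x$ is a local isometry for the Poincaré metrics. First I would establish the upper bound. The flow vector field $\partial_t A_x(t)$ is a continuous non-vanishing section of $T_\FF$ over the compact manifold $M$; by Theorem~\ref{compactness} the leafwise Poincaré metric is continuous, so the Poincaré norm of this vector field is bounded above by some $\rho$, uniformly in $x$ and $t$. Integrating, $d_P(\alpha(t),\alpha(t')) = d_P(A_x(t),A_x(t')) \leq \rho |t-t'|$, where the first equality uses that $\pi$ is a Riemannian covering.

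The lower bound is the substantive point, and it is where the Anosov hypothesis enters. The key mechanism is that along the weak stable leaf the flow direction is, relative to the ambient geometry, \emph{expanded} in backward time and the transverse-within-the-leaf (strong stable) direction is contracted; equivalently, the Poincaré geometry of the leaf must accommodate the exponential contraction of leafwise area by the flow (which is what forced hyperbolicity in the first place). Concretely, I would argue that the segment $A_x([t,t'])$ cannot be a short loop or be folded back on itself in $L_x$: if $d_P(\alpha(t_n),\alpha(t_n'))$ were bounded while $|t_n - t_n'| \to \infty$, then by the compactness of $U(\FF)$ (Theorem~\ref{compactness}) and of $M$ one could extract a limit giving a \emph{periodic} Poincaré geodesic-ish configuration, i.e. a trajectory of the (limit of the) flow returning close to itself with controlled Poincaré displacement but unbounded time. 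This would contradict either the absence of a transverse invariant measure proved just above (a genuinely closed orbit in a leaf, combined with leafwise Poincaré-area contraction, is incompatible) or, more robustly, the Anosov expansivity: the flow $A$ is expansive, so two long orbit segments that stay Poincaré-close (hence ambient-close, again by continuity of the Poincaré metric comparing it to a fixed background metric on $M$) must essentially coincide up to time reparametrization, forcing $|t-t'|$ to be comparably bounded. Quantifying this — turning expansivity plus the local product structure of the Anosov flow into a \emph{linear} lower bound $d_P(\alpha(t),\alpha(t')) \geq \rho^{-1}|t-t'|$ with uniform constant — is the main obstacle; the cleanest route is likely: (i) show $A_x$ is a quasi-geodesic in the \emph{leafwise} word/length metric by a standard Anosov shadowing/local-product argument, and (ii) compare that length metric with the Poincaré metric using the Švarc–Milnor lemma together with the cocompactness of the $\pi_1(M)$-action on $\widetilde M$ and the continuity/compactness from Theorem~\ref{compactness}, so that the quasi-geodesic property transfers to $(\mathbb D, d_P)$ with constants independent of the leaf.

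Finally I would note that once the lower bound holds on $L_x$ it transfers verbatim to $\alpha$ in $\mathbb D$: distances are preserved by $\pi$ upward from $\mathbb D$, and the lift $\alpha$ of the path $A_x$ realizes, along each subinterval, the same Poincaré length, while $d_P(\alpha(t),\alpha(t')) \geq d_P^{L_x}(A_x(t),A_x(t'))$ because $\pi$ is $1$-Lipschitz (indeed a local isometry, and distances can only decrease when projecting). Combining the two bounds, and enlarging $\rho$ if necessary to serve simultaneously as the constant and its reciprocal bound, yields the proposition with a single $\rho > 0$ independent of $\alpha$, as claimed.
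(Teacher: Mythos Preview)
Your upper bound is fine and matches the paper. The lower bound, however, has a genuine gap: the compactness/expansivity sketch would at best show $d_P(\alpha(t),\alpha(t'))\to\infty$ as $|t-t'|\to\infty$, not the \emph{linear} lower bound you need, and the proposed fixes do not close this. Expansivity of the Anosov flow controls pairs of full orbits that stay close for all time, not the distance between two points on a single orbit; and the \v{S}varc--Milnor route is misapplied, since there is no cocompact group action on an individual leaf to compare the Poincar\'e metric with a word metric. The ``closed orbit vs.\ area contraction'' idea likewise does not produce a quantitative bound.

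The paper's argument bypasses all of this with a short, direct idea you are missing: use the \emph{holonomy of the normal bundle} $N_\FF$ as a ruler. The normal bundle to the weak stable foliation is $E^{uu}$, and the Anosov property gives $\xi>0$ with $|dA^t v|\geq e^{\xi t}|v|$ for $v\in E^{uu}$, hence the holonomy derivative along the flow segment from $\alpha(t)$ to $\alpha(t')$ satisfies $|h'|\geq e^{\xi(t'-t)}$. On the other hand, by compactness of $M$ there is $\xi'>0$ such that the holonomy derivative along \emph{any} path $\gamma$ is $\leq e^{\xi'\,\mathrm{length}(\gamma)}$. Since holonomy depends only on the homotopy class rel endpoints, and $\mathbb D$ is simply connected, you may replace the flow segment by the Poincar\'e geodesic joining $\alpha(t)$ to $\alpha(t')$. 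Comparing the two estimates gives $e^{\xi(t'-t)}\leq e^{\xi'\,d_P(\alpha(t),\alpha(t'))}$, i.e.\ $d_P(\alpha(t),\alpha(t'))\geq(\xi/\xi')\,|t-t'|$, with constants independent of $x$ and $\alpha$.
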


\begin{proof} 
The upper bound is just the fact that the vector field $X$ is bounded for the Poincar\'e metric: one can take $\rho = \max_M  \vert X \vert$. Let us verify the lower bound.  Observe that the normal bundle to the weak stable foliation is identified with the strong stable foliation $E^{uu}$. The Anosov property yields a continuous (adapted) metric on this bundle and  $\xi > 0$ such that 
\[  \forall t \geq 0 \ , \  \forall v_u \in  E^{uu} \ , \ \vert dA^t(v_u) \vert \geq e^{\xi t} \vert v_u \vert .\]
In particular, if $h$ denotes the holonomy of the weak stable foliation, then 
\[ \vert h'_{\pi \circ \alpha|_{[t,t']}} \vert   \geq  e^{\xi (t'-t)} . \]
 On the other hand, there exists $\xi' > 0$ such that the derivative of a holonomy map along a path $\gamma$ is always $\leq e^{\xi' \mathrm{length} (\gamma)}$ (use local coordinates). Hence  $d_P(\alpha(t), \alpha(t')) \geq  (\xi - \xi' )(t'-t)$ as desired. 
\end{proof}

Proposition~\ref{puol} implies that the $\alpha$'s are quasi-geodesics. Such a property implies that there exists $\delta := \lim_{t \to \infty} \alpha(t) \in \partial  \mathbb D$. 
We define a vector field on $M$ by setting $Y(x) := d_0 \pi ( \delta )$, 
it does not depend on $\pi$ (whereas $\delta$ does). In other words, $Y(x)$ is the unitary direction at $x$ which looks at the limit of
the $A$-trajectory starting from $x$. 

Let $\psi(x,t)$ be the flow on $M$ associated with $Y$. By definition, the path 
\[ \forall t \in \mathbb R \ , \ \beta(t) := \pi^{-1} \psi_x(t)    \] 
coincides with the geodesic radius $[0,\delta[$. Observe that because of the Anosov property, if $\alpha'$ is the lift of another trajectory contained in the leaf through $x$, then the limit $\lim_{t\rightarrow \infty} \alpha'(t)$ is also $\delta$. Hence, the flow $\psi$ lifts to the universal cover of the leaves to a flow whose trajectories are geodesics tending to the same point at infinity. To finish the proof, we need to prove that $\psi$ is continuous.  We use the following classical proposition, its proof uses hyperbolic geometry in $\mathbb D$.

\begin{prop} \label{mieux}
Let $\alpha_n: \mathbb R^+ \rightarrow \mathbb D$ be a sequence of $\rho$-quasigeodesics starting at $\alpha_n(0)=0$ and converging uniformly on compact sets to a $\rho$-quasigeodesic $\alpha$. If  $\delta_n := \lim_{t\rightarrow +\infty} \alpha_n(t) $ and $\delta := \lim_{t\rightarrow +\infty} \alpha(t) $, then $\lim_n \delta_n = \delta$. \end{prop}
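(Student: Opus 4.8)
The statement is a standard fact in $\delta$-hyperbolic geometry: the endpoint-at-infinity map is continuous when restricted to families of $\rho$-quasigeodesics issuing from a fixed basepoint. The key tool is the Morse/stability lemma: a $\rho$-quasigeodesic in $\mathbb D$ stays within bounded Hausdorff distance $C = C(\rho)$ of the genuine geodesic ray with the same endpoints; moreover $C$ depends only on $\rho$ and not on the individual quasigeodesic. So first I would invoke this lemma to produce, for each $n$, the geodesic ray $g_n$ from $0$ to $\delta_n$ with $d_{\mathrm{Haus}}(\alpha_n([0,\infty)), g_n([0,\infty))) \leq C$, and similarly the geodesic ray $g$ from $0$ to $\delta$ with $d_{\mathrm{Haus}}(\alpha([0,\infty)), g([0,\infty))) \leq C$.

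Next I would translate uniform convergence on compacts of the $\alpha_n$ into convergence of the rays $g_n$. Fix a large radius $T$. Since $\alpha_n \to \alpha$ uniformly on $[0,T']$ for $T'$ chosen so that $\alpha([0,T'])$ reaches hyperbolic distance well beyond $T$ from $0$ (possible since $\alpha$ is a $\rho$-quasigeodesic, hence $d_P(0,\alpha(t)) \geq \rho^{-1} t \to \infty$), the point $\alpha_n(T')$ lies, for $n$ large, within distance $1$ of $\alpha(T')$, hence within distance $C+1$ of $g(T')$ and also within distance $C$ of $g_n$. Two geodesic rays from $0$ that each pass within bounded distance of a common point at large distance $R$ from $0$ must have endpoints close in $\partial\mathbb D$: quantitatively, in hyperbolic geometry the visual angle subtended at $0$ by a ball of fixed radius centered at distance $R$ decays like $e^{-R}$. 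Therefore, given $\varepsilon > 0$, by taking $T$ (hence $R$) large enough and then $n$ large enough, I get $\delta_n$ within $\varepsilon$ of $\delta$ in $\partial\mathbb D$, which is the claim.

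**The main obstacle.** The only real subtlety is making sure the stability constant $C$ is uniform over the whole family $\{\alpha_n\}$ — but this is exactly the content of the Morse lemma in the form "$C$ depends only on the quasigeodesic constant $\rho$", which is classical (e.g. via the standard length-vs-distance comparison argument). A secondary technical point is quantifying "geodesic rays from $0$ passing near a far-away common point have nearby endpoints"; this is an elementary exercise with the explicit metric, using that in the upper-half-plane or disc model a geodesic from the basepoint to $\delta \in \partial\mathbb D$ is determined by $\delta$ and the family of such geodesics depends continuously (indeed $1$-Lipschitz after suitable normalization) on $\delta$. Both ingredients are routine hyperbolic geometry, so I would state them as the two lemmas above and assemble the $\varepsilon$-argument; the proof is short once the Morse lemma is in hand.
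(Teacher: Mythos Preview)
Your approach is correct and is the standard route. Note, however, that the paper does not actually give a proof of this proposition: it simply declares the result classical and remarks that ``its proof uses hyperbolic geometry in $\mathbb D$''. Your outline via the Morse/stability lemma (uniform constant $C=C(\rho)$) together with the exponential decay of visual angle at the basepoint is exactly the kind of hyperbolic-geometry argument the paper alludes to, so there is nothing to compare against beyond that hint. The two technical points you flag---uniformity of $C$ in $\rho$ and the visual-angle estimate---are indeed the only content, and both are classical; your assembly of the $\varepsilon$-argument is sound.
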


Let us deduce the continuity of $\psi$. It suffices to prove that the vector fields $Y$ is continuous on $M$, hence we want to show $Y(x_n) \to Y(x)$ for every $x_n \to x$. By compactness, we can assume $Y(x_n) \to Y$ up to take a subsequence.  Let $\pi_n : \mathbb D \rightarrow L_{x_n}$ be uniformizations. Taking again a subsequence, $\pi_n$ converges uniformly on compact sets to a uniformization $\pi : \mathbb D \rightarrow L_x$ (see theorem \ref{compactness}). The lift $\alpha_n := \pi_n^{-1} A_{x_n}$ also converges to the lift $\alpha := \pi^{-1} A_x$. Since these paths are $\rho$-quasigeodesics, there exist $\delta_n := \lim \alpha_n$, $\delta := \lim \alpha$, and we have $\delta_n \to \delta$ by proposition \ref{mieux}. Composing by $d_0 \pi_n$ and $d_0 \pi$, we obtain $Y =Y(x)$. Proof of lemma~\ref{l:anosov} is complete.

\subsection{Volume estimates}\label{ss:margulis-ruelle}
Let $\psi: M\times \mathbb R\rightarrow M$ be the continuous flow coming from the point at infinity. The next proposition shows that $\psi$ is absolutely continuous and it computes its Jacobian. Let $\nu:=\theta \otimes \textrm{vol}_P$, where $\theta$ is the volume measure of some fixed metric on $N_\FF$ and $\mathrm{vol}_P$ is the leafwise Poincar\'e volume.

\begin{prop}\label{contflow} $\psi$ is absolutely continuous with respect to $\nu$, and  
\[  \forall t \geq 0 \ , \ \forall x \in M \ , \  \frac{\psi^{t}_* \nu}{\nu}(x) = e^{t} \cdot \vert h'_{(\psi^{-s}(x))_{0\leq s\leq t}}  \vert .\]
\end{prop}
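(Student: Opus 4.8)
The plan is to work locally in foliated charts and compute the density of $\psi^t_*\nu$ with respect to $\nu$ by the change-of-variables formula, distinguishing the leafwise direction from the transverse direction. Fix a chart $U_j\simeq\mathbb D\times F_i$ and recall that $\nu=\theta\otimes\mathrm{vol}_P$ splits as a product of a transverse measure and the leafwise Poincaré volume. The flow $\psi$ is tangent to $\FF$, so it maps leaves to leaves; write $\psi^t(z,t_0)=(\psi^t_{\rm leaf}(z,t_0),\,\beta_t(t_0))$ where $\beta_t$ is the transverse component. The key point is that the transverse component is nothing but holonomy: by definition $\psi$ lifts on $\mathbb D\to L$ to geodesics converging to the point at infinity $\delta$, and following such a geodesic from $x$ to $\psi^t(x)$ realizes the holonomy map $h_{(\psi^s(x))_{0\le s\le t}}$. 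Hence the transverse Jacobian of $\psi^t$ (the density of $\beta_t$ against $\theta$) equals $|h'_{(\psi^s(x))_{0\le s\le t}}|$, or rather its inverse depending on the direction one pushes; the pushforward $\psi^t_*$ produces the holonomy over the \emph{reversed} path $(\psi^{-s}(x))_{0\le s\le t}$, which is exactly the factor $|h'_{(\psi^{-s}(x))_{0\le s\le t}}|$ appearing in the statement.

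Next I would compute the leafwise Jacobian. Inside a single leaf $L$, lift to $\mathbb D$ with the Poincaré metric; the flow becomes translation along geodesics pointing to a fixed $\delta\in\partial\mathbb D$. These are precisely the orbits of the (restricted to one horocycle foliation) geodesic flow, and the leafwise volume $\mathrm{vol}_P$ is not invariant: moving along the geodesic toward $\delta$ expands transverse-to-the-geodesic lengths. Concretely, in upper-half-plane coordinates with $\delta=\infty$, the geodesics pointing to $\infty$ are the vertical lines, the flow at unit speed is $y\mapsto e^t y$, and $\mathrm{vol}_P=\frac{dx\,dy}{y^2}$ transforms with Jacobian $e^{-t}$ under $(x,y)\mapsto(x,e^ty)$; equivalently the pushforward $\psi^t_*\mathrm{vol}_P$ has density $e^{t}$ with respect to $\mathrm{vol}_P$ (the divergence of the unit vector field pointing to a point at infinity is $+1$, since it is minus the Busemann Laplacian and $\Delta b=-1$ for the hyperbolic Busemann function normalized this way, giving exponential rate $e^{t}$ for the pushforward of volume). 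This supplies the factor $e^{t}$ in the formula. Absolute continuity of $\psi$ with respect to $\nu$ follows because both factors of the density are continuous and strictly positive: the leafwise factor $e^{t}$ is explicit, and the transverse factor $|h'_{(\psi^{-s}(x))_{0\le s\le t}}|$ is continuous in $x$ by the continuity of holonomy derivatives together with the continuity of the flow $\psi$ established in Lemma~\ref{l:anosov} (and of the Poincaré metric, Theorem~\ref{compactness}).

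Assembling the two directions via Fubini in the product chart $U_j\simeq\mathbb D\times F_i$ gives
\[
\frac{\psi^{t}_*\nu}{\nu}(x)=\Bigl(\text{leafwise factor}\Bigr)\cdot\Bigl(\text{transverse factor}\Bigr)=e^{t}\cdot\bigl|h'_{(\psi^{-s}(x))_{0\le s\le t}}\bigr|,
\]
which is exactly the asserted identity. I would then check the cocycle consistency $\psi^{t+t'}=\psi^t\circ\psi^{t'}$ against the chain rule for holonomy derivatives and the additivity $e^{t}e^{t'}=e^{t+t'}$, which is automatic, and verify that the formula is independent of the chosen metric on $N_\FF$: changing the metric by $e^\tau$ multiplies $|h'|$ by a ratio of positive continuous functions that cancels in the density, since $\theta$ changes the same way.

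The main obstacle is the bookkeeping of which path ($(\psi^s(x))$ versus $(\psi^{-s}(x))$) and which orientation of holonomy enters when one passes from the flow to its pushforward on measures: getting the inverse conventions right so that the transverse factor is $|h'_{(\psi^{-s}(x))_{0\le s\le t}}|$ and not its reciprocal, and matching the sign in the leafwise exponential rate so the product comes out as $e^{t}|h'|$ rather than, say, $e^{-t}|h'|^{-1}$. This is purely a matter of carefully tracking the definition of $\psi$ as the forward flow toward $\delta$ and the definition of the holonomy $h_{\gamma,t}$ over the path $\gamma:[0,t]\to L_x$ as stated before Theorem~\ref{limexplya}; once the conventions are pinned down the computation in upper-half-plane model is elementary.
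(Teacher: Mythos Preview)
Your approach is essentially the paper's: reduce to small $t$ in a single flow box, split the Radon--Nikodym derivative into a leafwise factor and a transverse factor, compute the leafwise factor in the upper half-plane model ($y\mapsto e^t y$, $\mathrm{vol}_P=dx\,dy/y^2$, giving $e^t$), and identify the transverse factor with the holonomy derivative. The computation and the final formula are correct.

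One clarification on your explanation of the transverse factor. In a foliated chart the flow $\psi^t$ is tangent to the leaves, so the transverse component is literally the identity: $\psi^t(z,u)=(\varphi^t(z,u),u)$ and your $\beta_t=\mathrm{id}$. The holonomy derivative does \emph{not} enter as a ``transverse Jacobian of $\psi^t$''; it enters because the transverse measure $\theta$ comes from a metric on $N_\FF$ and therefore has a density $f(z,u)$ that varies with the leafwise coordinate $z$. Writing $\theta=f(z,u)\,du$, the pushforward picks up the factor $f(\varphi^{-t}(z,u),u)/f(z,u)$, and this ratio is precisely $|h'_{(\psi^{-s}(x))_{0\le s\le t}}|$ when derivatives of holonomy are measured in that metric. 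This is exactly how the paper presents it; your end formula is right, but the sentence ``the transverse component is nothing but holonomy'' misidentifies where the factor comes from.
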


\begin{proof}
Let us compute the jacobian of $\psi^t$. It suffices to consider $t$ small. Every point is then moved in the same flow box, and the homeomorphim $\psi^t$ takes the form 
 \[ \psi^t (z,u) = (\varphi^t(z,u), u) , \]
where   $\varphi^t(.,u)$ is smooth and depends continuously on $u$ in the smooth topology. Writing $\theta = f(z,u) du$, elementary considerations show that $\psi^t$ is absolutely continuous and that
\[ \frac{\psi^{t} _* \nu}{\nu} = \frac{(\varphi^{-t})^*\textrm{vol}_P}{\textrm{vol}_P} \cdot \frac{f\circ \varphi^{-t}}{f} =  \frac{(\varphi^{-t})^*\textrm{vol}_P}{\textrm{vol}_P} \cdot \vert h' _{(\psi^{-s}(x))_{0 \leq s\leq t}} \vert . \]
We can uniformize any leaf by the upper half plane, in such a way that $\varphi^t(.,u)$ acts as $x+iy \mapsto x+ ie^t y$. Since $\textrm{vol}_P = dx \wedge dy/y^2$ in those coordinates, we have ${(\varphi^{-t})^*\textrm{vol}_P} = e^t {\textrm{vol}_P}$ as desired. 
\end{proof}
 
We now relate the Jacobian of $\psi$ to the Lyapunov exponent $\lambda$ of any ergodic harmonic measure $\mu$ (with respect to Poincar\'e metric). 

\begin{prop}\label{expogeo}
There exists a Borel set satisfying $\nu(B_\mu) > 0$ and 
\[  \forall x \in B_\mu \ , \ \lim_{t \to + \infty} {1 \over t} \log \vert h'_{(\psi^{-s}(x))_{0\leq s\leq t}}  \vert  =  \lambda . \]
\end{prop}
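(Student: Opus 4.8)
The plan is to compare the holonomy cocycle along the flow $\psi$ (evaluated backwards, along $s\mapsto\psi^{-s}(x)$) with the holonomy cocycle along leafwise Brownian paths, for which Theorem~\ref{limexplya} already gives the almost-sure limit $\lambda$. The key point is that a $\psi$-trajectory is a leafwise geodesic ray tending to the point at infinity $\delta\in\partial\mathbb D$, so it shadows (with bounded, in fact vanishing relative, error) the Brownian path conditioned to converge to $\delta$; and, since the leaves are hyperbolic of bounded geometry, Brownian motion on $\mathbb D$ has asymptotic speed $1$ and converges to a boundary point, and the conditioned process stays within bounded Poincar\'e distance of the geodesic to that point on a set of positive Wiener measure. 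First I would fix an ergodic harmonic measure $\mu$ with Lyapunov exponent $\lambda$, and by Theorem~\ref{limexplya} choose a Borel set $G\subset M$ with $\mu(G)>0$ such that for $x\in G$ and $W_x$-a.e.\ $\gamma$ we have $\frac1t\log|h'_{\gamma,t}|\to\lambda$; I would also arrange that the Poincar\'e speed of $\gamma$ is $1$ and that $\gamma(t)$ converges in $\partial\mathbb D$ (upstairs), discarding a null set.

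Next I would relate $\mu$ and $\nu$: write $\mu=T\wedge\mathrm{vol}_P$ as in Remark~\ref{l:decomposition}, so $\mu$ has local form $H_j(z,t)\,\mathrm{vol}_P\otimes\nu_j$ with $H_j(\cdot,t)$ positive harmonic and $\nu_j$ a transverse measure; then $\nu=\theta\otimes\mathrm{vol}_P$ and $\mu$ are mutually absolutely continuous in the transverse direction up to the (harmonic, hence locally bounded away from $0$ and $\infty$ on compacts) densities $H_j$, so $\mu(G)>0$ forces $\nu$ of the corresponding saturated-in-the-plaque set to be positive. The real content is to pass from a $W_x$-a.e.\ statement to a statement about the single geodesic $s\mapsto\beta(s)=\pi^{-1}\psi_x(s)$: here I would invoke that the harmonic measure class on $\partial\mathbb D$ seen from $0$ is the Lebesgue class, that the geodesic ray to $\delta$ and a Brownian path converging to a boundary point $\xi$ lie within Poincar\'e distance $O(1)$ of each other for a.e.\ pair $(\xi$ near $\delta)$, and that the holonomy derivative $|h'_\gamma|$ changes by a bounded multiplicative factor under a bounded perturbation of the path with the same endpoints (use local coordinates and the compactness of $M$, as in the proof of Proposition~\ref{puol}). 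This yields that, on a set $B_\mu$ of positive $\nu$-measure, $\frac1t\log|h'_{(\psi^{-s}(x))_{0\le s\le t}}|\to\lambda$; one passes from the forward flow to the backward flow either by noting the exponent is flow-invariant or by running the same argument with time reversed, the reversed geodesic still being a geodesic.

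The main obstacle I expect is precisely the shadowing step: making rigorous that the geodesic flow direction $Y$ and Brownian motion produce the \emph{same} asymptotic value of the holonomy cocycle. Brownian motion converges to a random boundary point distributed in the harmonic class, whereas $\psi$ always aims at the \emph{fixed} point $\delta$, so one cannot directly equate the two cocycles pathwise; the fix is a Fubini-type argument, disintegrating Wiener measure over the exit point and using that the set of exit points is a full-measure set, together with the continuity of the exponent in the transverse direction coming from harmonicity of the densities $H_j$. An alternative, cleaner route is to build a skew-product: the phase space $\Gamma$ carries the invariant measure $\bar\mu$, the map $\gamma\mapsto\lim\gamma(t)$ is measurable, and one pushes forward to the space of (leaf, boundary point) pairs, on which both the Brownian cocycle and the geodesic cocycle are defined; Birkhoff/Kingman applied to the geodesic flow with respect to the pushed-forward measure (which is equivalent to $\nu$ in the transverse direction) then gives the limit, and the limit must agree with $\lambda$ because the two cocycles are within a bounded additive error by the shadowing estimate. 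Either way, once the cocycle identity is in place the conclusion $\nu(B_\mu)>0$ is immediate from the absolute continuity relating $\mu$ and $\nu$.
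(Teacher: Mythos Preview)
Your overall strategy---compare the holonomy cocycle along $\psi$-geodesics with the cocycle along Brownian paths by a shadowing argument---is exactly the paper's approach, and Lemma~\ref{expexp} carries out precisely this comparison. However, two steps in your proposal do not go through as written.

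First, the claim that $\mu$ and $\nu$ are mutually absolutely continuous ``in the transverse direction up to the densities $H_j$'' is incorrect. In the local decomposition $\mu = H_j(z,t)\,\mathrm{vol}_P\otimes\nu_j$, the functions $H_j$ are \emph{leafwise} densities; the transverse part of $\mu$ is the measure $\nu_j$, and there is no reason for $\nu_j$ to be absolutely continuous with respect to the smooth transverse volume $\theta$. In general harmonic measures are transversally singular. So from $\mu(G)>0$ you cannot conclude $\nu(G')>0$ for any related set $G'$. The paper avoids this issue entirely: instead of Theorem~\ref{limexplya} (which only gives a $\mu$-full set), it invokes Proposition~\ref{DK2} from \cite{DK}, which produces an \emph{open} set $U\subset M$ such that for every $x\in U$ one has $W_x(\Lambda_x)>0$ with $\frac1t\log|h'_{\gamma,t}|\to\lambda$ on $\Lambda_x$. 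Openness of $U$ gives $\nu(U)>0$ for free and Fubini then finishes the argument; this step relies on the H\"older regularity of the Poincar\'e metric and is not elementary.

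Second, your shadowing claim that a Brownian path and the geodesic to its exit point stay within Poincar\'e distance $O(1)$ is false. The correct estimate is Ancona's theorem (Theorem~\ref{ancona}): $d_P(\gamma(t),[0,\delta[)\le c\log t$ for large $t$. This weaker bound is still $o(t)$ and suffices for the comparison, but one then also needs the drift result $\frac1t d_P(0,\gamma(t))\to 1$ (Proposition~\ref{kingman}) to match the time parametrization between the Brownian path and the unit-speed geodesic $\beta$, yielding $|\tau_t - t|\le \varepsilon t + c\log t$. The paper combines these with the chain rule for holonomy along the concatenation $\gamma + \tilde\gamma + \hat\gamma \sim \beta$ and the Lipschitz bound $|\log|h'_\sigma||\le C\cdot\mathrm{length}(\sigma)$; your $O(1)$ claim would make the $\tilde\gamma$ and $\hat\gamma$ corrections bounded rather than $o(t)$, which is stronger than what is actually available.

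Finally, your remark that one can pass between forward and backward flow ``by noting the exponent is flow-invariant'' misses the geometric asymmetry: forward $\psi$-trajectories in a leaf all converge to the \emph{single} point $\delta_\infty$, so one cannot match them against Brownian paths with random exit points. It is the \emph{backward} trajectories that converge to varying points of $\partial\mathbb D\setminus\{\delta_\infty\}$; since the exit distribution of Brownian motion from $0$ is in the Lebesgue class, the set $\delta_x=\delta(A\cap\Lambda_x)\subset\partial\mathbb D$ has positive Lebesgue measure, and hence a positive $\mathrm{vol}_P$-measure set of starting points in $L_x$ have backward limit in $\delta_x$. This is what makes the Fubini step work.
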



We shall need the three results below.  Let $W$ denote the Wiener measure on the set of continuous paths $\Gamma$ in $\mathbb D$ starting at the origin. 

\begin{prop}\label{kingman}
For $W$-almost every $\gamma \in \Gamma$, we have 
\[ \lim_{t\rightarrow \infty} {1 \over t} d_P( 0, \gamma(t)) = 1. \]
\end{prop}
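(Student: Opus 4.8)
The plan is to recognize this as the statement that the linear drift (rate of escape) of Brownian motion on the hyperbolic disc equals $1$, and to prove it by combining an upper bound coming from the heat kernel with a lower bound coming from the harmonic measure at infinity. First I would set up notation: let $\gamma$ be a Brownian path on $\mathbb{D}$ equipped with the Poincar\'e metric $ds^2 = 4|dz|^2/(1-|z|^2)^2$, so that $\Delta_P$ is the associated Laplace--Beltrami operator. The function $t \mapsto d_P(0,\gamma(t))$ is subadditive along the flow in a suitable sense, so that the existence of the limit $\ell := \lim_t \frac{1}{t} d_P(0,\gamma(t))$ almost surely (and its being a constant, by the $0$--$1$ law / ergodicity of the shift) follows from Kingman's subadditive ergodic theorem — this is what the label of the proposition refers to. It remains to identify $\ell = 1$.

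For the upper bound $\ell \leq 1$, I would use Gaussian-type estimates for the hyperbolic heat kernel: on $\mathbb{H}^2$ one has $p_P(t,0,y) \leq C t^{-3/2}(1+d)e^{-t/4} e^{-d^2/(4t)}$ with $d = d_P(0,y)$ (this is the classical explicit formula; see \cite{chavel}, or the Davies--Mandouvalos estimates). Integrating over the region $\{d_P(0,y) \geq (1+\varepsilon)t\}$ and applying Borel--Cantelli along integer times, together with a continuity/maximal estimate to fill in between integer times (the Poincar\'e metric has bounded geometry, so Brownian motion does not move too far in bounded time with overwhelming probability), yields $d_P(0,\gamma(t)) \leq (1+\varepsilon)t$ eventually, hence $\ell \leq 1$.

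For the lower bound $\ell \geq 1$ — which I expect to be the main obstacle — I would use the fact that Brownian motion on $\mathbb{D}$ converges almost surely to a boundary point $\gamma(\infty) \in \partial\mathbb{D}$, whose distribution is the harmonic measure from the origin, i.e.\ normalized Lebesgue measure on the circle. Writing the Busemann cocycle (horocycle index) $b_{\gamma(\infty)}(\gamma(t))$, one checks that $t \mapsto b_{\gamma(\infty)}(\gamma(t))$ is, after conditioning on $\gamma(\infty)$, a Brownian motion with constant positive drift on $\mathbb{R}$ — concretely, in the upper half-plane model with $\gamma(\infty) = \infty$, the quantity $\log(\mathrm{Im}\,\gamma(t))$ satisfies $d\log(\mathrm{Im}\,\gamma(t)) = \sqrt{2}\, dB_t + dt$ by It\^o's formula applied to $\Delta_P$, so $\frac{1}{t}\log(\mathrm{Im}\,\gamma(t)) \to 1$ almost surely. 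Since $d_P(0,\gamma(t)) \geq |b_{\gamma(\infty)}(\gamma(t))|$ and the latter grows like $t$, we get $\ell \geq 1$. Combining the two bounds gives $\ell = 1$, which is exactly the claim. The delicate points to be handled carefully are the passage from integer times to all times in the upper bound (a standard Borel--Cantelli plus modulus-of-continuity argument) and the justification that the Busemann function is smooth enough to apply It\^o and has the asserted drift under $\Delta_P$ (a direct computation: $\Delta_P$ applied to the linear Busemann coordinate equals $+1$ with our normalization of curvature $-1$).
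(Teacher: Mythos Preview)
Your proof is correct but follows a genuinely different route from the paper's. Both begin with Kingman's subadditive theorem to obtain the almost-sure existence of the limit $\delta$; the divergence is in how $\delta=1$ is identified. The paper does it in a single stroke: it applies Dynkin's formula to $\log\varphi$, where $\varphi(z)=(1-|z|^2)/|1-z|^2$ is the Poisson kernel at $1\in\partial\mathbb D$ (so $\Delta_m\log\varphi\equiv -1$), obtaining the exact identity $\tfrac{1}{t}\,\mathbb E[\log\varphi(\gamma(t))]=-1$ for every $t$; it then uses rotational invariance of $W$ to show that the same quantity tends to $-\delta$, since the circular mean of $\log\varphi$ on the hyperbolic circle of radius $R$ equals $\log(1-r^2)\in[-R,-R+\log 4]$ and $R/t\to\delta$ almost surely. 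You instead sandwich: an upper bound from Gaussian heat-kernel tails plus Borel--Cantelli, and a lower bound from It\^o on the Busemann function after conditioning on $\gamma(\infty)$. Both arguments ultimately rest on the very same object --- your Busemann cocycle $b_\xi$ is exactly $-\log\varphi$ --- but the paper extracts both inequalities from one expectation identity and avoids heat-kernel estimates altogether, while your modular approach transports more readily to settings without rotational symmetry. One point to sharpen in your write-up: when you invoke ``It\^o's formula applied to $\Delta_P$'' to get drift $+1$ for $\log(\mathrm{Im}\,\gamma(t))$, note that the \emph{unconditioned} computation gives $\Delta_P\log y=-1$; the sign change to $+1$ comes from the Doob $h$-transform (generator $\Delta_P+2\langle\nabla\log y,\nabla\cdot\rangle$), and this should be made explicit.
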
 

This result is classical (see e.g. \cite{Pinsky}, section 9.6), we provide a proof in an appendix (see section \ref{kingking}).  
We shall use the well-known shadowing property of geodesics by Brownian paths (see \cite{A}, th\'eor\`eme 7.3). Recall that $W$-a.e. $\gamma$ has a limit $\delta = \lim_{t\rightarrow \infty} \gamma(t) \in \partial \mathbb D$. 

\begin{thm}[Ancona]\label{ancona}
For $W$-a.e. $\gamma\in \Gamma$, there exists $c > 0$ such that \[    d_P ( \gamma(t) ,  [0,\delta[ )  \leq c \log t  \]
for $t$ large enough.
\end{thm}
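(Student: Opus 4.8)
The plan is to reduce the shadowing estimate to a sharp \emph{rate of angular convergence} of the Brownian path, and to extract that rate from the skew-product structure of hyperbolic Brownian motion on $\mathbb D$. First I would fix a $W$-typical path $\gamma$, which by Proposition~\ref{kingman} escapes linearly, $\rho_t := d_P(0,\gamma(t))$ with $\rho_t/t \to 1$, and which converges to a boundary point $\delta = \lim_t \gamma(t) \in \partial\mathbb D$. Working in geodesic polar coordinates $(\rho,\phi)$ centred at $0$, write $\gamma(t) = (\rho_t,\phi_t)$ and let $\theta_t$ be the visual angle at $0$ between $\gamma(t)$ and $\delta$, so $\theta_t = |\phi_t - \phi_\infty|$ where $\phi_\infty$ is the angle of $\delta$. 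Dropping the perpendicular from $\gamma(t)$ to the ray $[0,\delta[$ and using the right-triangle identity of hyperbolic trigonometry
\[ \sinh d_P(\gamma(t),[0,\delta[) = \sinh \rho_t \cdot \sin\theta_t , \]
the target $d_P(\gamma(t),[0,\delta[) \le c\log t$ becomes $e^{\rho_t}\sin\theta_t \le \mathrm{poly}(t)$: I must show that the angular coordinate reaches $\phi_\infty$ at essentially the exponential rate $e^{-\rho_t}$, with only a polynomial defect.

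To control $\theta_t$ I would invoke the skew-product decomposition of the hyperbolic Brownian motion. In the coordinates $(\rho,\phi)$ the Laplacian is $\partial_\rho^2 + \coth\rho\,\partial_\rho + \sinh^{-2}\rho\,\partial_\phi^2$, so $\rho_t$ is an autonomous one-dimensional diffusion (with drift $\coth\rho \to 1$, which is exactly where curvature $-1$ enters and fixes the escape rate), while the angular part is a time-changed circular Brownian motion, $\phi_t = \beta(A_t)$ with $A_t = \int_0^t \sinh^{-2}\rho_s\,ds$ and $\beta$ a standard Brownian motion. Since $\rho_s \sim s$ the clock converges, $A_\infty < \infty$, whence $\phi_\infty = \beta(A_\infty)$ and $\theta_t = |\beta(A_t) - \beta(A_\infty)|$. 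The a.s.\ modulus of continuity of $\beta$ then bounds this by $C\sqrt{(A_\infty - A_t)\log\frac{1}{A_\infty - A_t}}$ for $t$ large. Writing $R_t := e^{2\rho_t}(A_\infty - A_t) \asymp \int_t^\infty e^{-2(\rho_s - \rho_t)}\,ds$, one gets
\[ e^{\rho_t}\theta_t \le C\sqrt{R_t}\,\sqrt{\log\bigl(e^{2\rho_t}/R_t\bigr)} \le C'\sqrt{R_t}\,\sqrt{\rho_t} , \]
so the whole problem reduces to showing $R_t \le \mathrm{poly}(t)$ eventually a.s.

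The hard part is precisely this last bound, since it demands a genuine \emph{cancellation}: the leading term $\rho_t$ from $\sinh\rho_t$ must be matched, up to a polynomial error, by the exponential smallness of $A_\infty - A_t$, and both are driven by the \emph{same} radial fluctuations — so crude two-sided estimates like $s/2 \le \rho_s \le 2s$ are far too lossy and destroy the $\log t$ scale. I would treat $R_t$ via the strong Markov property of the radial diffusion at time $t$: the increment $u \mapsto \rho_{t+u} - \rho_t$ is, for large $\rho_t$, comparable to a Brownian motion with unit drift started at $0$, so $R_t$ is stochastically dominated by a fixed random variable of the shape $\int_0^\infty e^{-2(u+B_u)}\,du$, which is a.s.\ finite with finite exponential moments. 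A Markov inequality and Borel--Cantelli along integer times $t=n$, interpolated using the local regularity of $\rho$, then give $R_t = O(\log t)$ eventually a.s. Substituting back yields $e^{\rho_t}\sin\theta_t \le \mathrm{poly}(t)$ and hence $d_P(\gamma(t),[0,\delta[) \le c\log t$.

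An alternative, and the route underlying Ancona's theorem in its stated generality, bypasses the explicit skew product and works on any Gromov-hyperbolic manifold of bounded geometry: one establishes the quasi-multiplicativity of the Green function along geodesics (the Ancona inequality $G(x,z) \asymp G(x,y)G(y,z)$ when $y$ lies near a geodesic from $x$ to $z$), deduces that the Martin boundary coincides with the geometric boundary and that harmonic measures of shadows of balls decay controllably, and then obtains the shadowing estimate by a Borel--Cantelli argument over the events that $\gamma$ exits a $c\log t$-neighbourhood of $[0,\delta[$. I would present the explicit disc computation as the primary argument, since here every leaf is uniformized by $\mathbb D$ and the skew-product makes the delicate cancellation transparent, while noting that the potential-theoretic argument is what confers the generality recorded in \cite{A}.
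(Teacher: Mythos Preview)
The paper does not prove this statement: Theorem~\ref{ancona} is quoted from Ancona's work (see \cite{A}, th\'eor\`eme~7.3) and used as a black box in the proof of Lemma~\ref{expexp}. There is therefore no ``paper's own proof'' to compare against; the paper simply invokes the result.

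That said, your sketch is a reasonable route to the shadowing estimate in the specific case of the Poincar\'e disc. The reduction via hyperbolic trigonometry to an angular decay rate, the skew-product decomposition of the Laplacian in polar coordinates, and the identification of the key quantity $R_t = e^{2\rho_t}(A_\infty - A_t)$ are all correct and standard. The step that needs the most care is the claim $R_t = O(\log t)$ eventually a.s.: your strong Markov argument gives that $R_t$ is stochastically dominated (for large $\rho_t$) by a perpetual integral with finite exponential moments, and Borel--Cantelli along integers is fine, but the interpolation to continuous $t$ deserves a line of justification, since $R_t$ is not monotone and the fluctuation of $\rho$ over $[n,n+1]$ is of order $\sqrt{\log n}$ rather than bounded. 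This is manageable (e.g.\ control $\sup_{[n,n+1]}|\rho_t - \rho_n|$ by another Borel--Cantelli), but should be said explicitly.

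Your second paragraph correctly identifies that Ancona's actual argument in \cite{A} proceeds via the Ancona inequality for Green functions and the identification of the Martin boundary with the geometric boundary, valid in far greater generality than $\mathbb D$. For the purposes of this paper the disc computation you outline would suffice, and is arguably more transparent; but since the authors only cite the result, neither approach is ``the paper's''.
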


Finally let us recall the following proposition from \cite{DK}.

\begin{prop} \label{DK2}
There exists an open set $U \subset M$ satisfying the following properties. For every $x \in U$, there exists $\Lambda_x \subset \Gamma_x$ such that 
\begin{enumerate}
\item $W_x(\Lambda_x) >0$,
\item $\forall \gamma \in \Lambda_x$, $\lim_{t \to + \infty} {1 \over t} \log \vert h'_{\gamma , t }  \vert  =  \lambda$. 
\end{enumerate}
\end{prop}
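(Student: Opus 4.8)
The plan is to promote the almost-everywhere statement of Theorem~\ref{limexplya} to a statement holding at every point of a suitable open set. The first step is to recognize that the Lyapunov limit defines a \emph{tail} (shift-invariant) event. Indeed, the multiplicativity of the transverse holonomy gives, for any $s<t$, the cocycle relation $\log\lvert h'_{\gamma,t}\rvert = \log\lvert h'_{\sigma_s\gamma,\,t-s}\rvert + \log\lvert h'_{\gamma,s}\rvert$, so that $\Phi(\gamma):=\lim_{t\to\infty}\frac1t\log\lvert h'_{\gamma,t}\rvert$ (defined $\bar\mu$-a.e.) satisfies $\Phi=\Phi\circ\sigma_s$. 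Hence $\{\Phi=\lambda\}$ is shift-invariant, and by Theorem~\ref{limexplya} it has full $\bar\mu$-measure. I would then introduce its harmonic potential
\[ u(x):=W_x\bigl(\{\gamma\in\Gamma_x:\Phi(\gamma)=\lambda\}\bigr), \]
which by the Markov property and tail-invariance is a bounded, leafwise harmonic function with $0\le u\le 1$. From $\int_M u\,d\mu=\bar\mu(\{\Phi=\lambda\})=1$ and $u\le 1$ one gets $u=1$ $\mu$-almost everywhere; strict positivity of the leafwise heat kernel then forces $u\equiv 1$ on any leaf meeting $\{u=1\}$, so the good set $\{u=1\}$ is a saturated Borel set of full $\mu$-measure. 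In the minimal case it therefore contains a dense leaf $L_0$, and in particular a plaque $P_0\simeq\mathbb D\times\{t_0\}$ sitting in a foliated chart $U_j\simeq\mathbb D\times F_i$; every point of $P_0$ is a base point for which $\Lambda_x$ has full $W_x$-measure.

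The remaining, and central, task is to transfer this positivity transversally to a whole neighborhood $U$ of $P_0$, producing $\Lambda_x$ of positive (not necessarily full) $W_x$-measure for \emph{every} $x\in U$. The mechanism I would use is a \textit{shadowing/coupling} argument exploiting that $\lambda<0$, i.e.\ that nearby leaves approach transversally along typical Brownian paths. Concretely, for $x=(z,t)$ with $t$ close to $t_0$ one compares a Brownian path $\gamma$ in the leaf $L_x$ with a Brownian path $\gamma_0$ in the good plaque: using the continuity of uniformizations and of the leafwise heat kernel in the transverse parameter (Theorem~\ref{compactness} together with the local product structure of $\mu$ from Remark~\ref{l:decomposition}), one constructs a coupling of $W_x$ and $W_{x_0}$ under which, with positive probability, the two paths remain within a common stack of foliated charts for all time. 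On this positive-probability event the transverse holonomies connecting the two endpoints stay in a fixed compact family, so the chain rule gives $\log\lvert h'_{\gamma,t}\rvert=\log\lvert h'_{\gamma_0,t}\rvert+O(1)$ uniformly in $t$. The bounded discrepancy disappears after dividing by $t$, whence $\Phi(\gamma)=\Phi(\gamma_0)=\lambda$ on a set of positive $W_x$-measure, which I take as $\Lambda_x$.

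The main obstacle is precisely this last paragraph: Brownian transport of holonomy is not itself a Brownian path in the target leaf, so one cannot simply push $\gamma_0$ to $L_x$ and keep a probabilistic interpretation. Making the coupling rigorous — showing that, with positive probability, a genuine $W_x$-Brownian path shadows a good path closely enough that the two holonomy cocycles differ by $O(1)$ — is the delicate point and is where the quantitative input from the transverse contraction $\lambda<0$ and the uniform control coming from compactness of $M$ must be used. Once this positive-probability shadowing and the accompanying cocycle comparison are established, the neighborhood $U$ of $P_0$ on which $u>0$ provides the desired open set, and $\Lambda_x:=\{\gamma\in\Gamma_x:\Phi(\gamma)=\lambda\}$ satisfies both conclusions of the proposition.
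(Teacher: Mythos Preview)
The paper does not prove this proposition; it is quoted from \cite{DK}, with the crucial caveat (stated immediately after the statement) that the result \emph{requires H\"older regularity of the leafwise metric}, and that counterexamples exist without this hypothesis (see \cite{DV}). The paper then notes two ways to secure the hypothesis: either invoke the H\"older continuity of the Poincar\'e metric proved in \cite{DNS1}, or work with a smooth hermitian metric on $T_\FF$ and use conformal invariance of planar Brownian motion.

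Your steps (1)--(3) --- tail-invariance of the Lyapunov event, the harmonic potential $u(x)=W_x(\{\Phi=\lambda\})$, and the saturation of $\{u=1\}$ --- are correct and standard; they give a dense saturated set on which $u\equiv 1$. The gap you yourself flag in the transverse step is genuine and is \emph{not} closable using only compactness of $M$ and negativity of $\lambda$. The shadowing/coupling mechanism you sketch is indeed the one used in \cite{DK}, but making it rigorous requires a quantitative modulus: one must control how the leafwise transition probabilities vary in the transverse parameter, and mere continuity of the metric (equivalently, of the uniformizations in Theorem~\ref{compactness}) does not suffice. The Hirsch-type examples of \cite{DV} are foliations where the leafwise metric is continuous but not H\"older, and where the conclusion of the proposition fails outright: the set of points $x$ with $W_x(\Lambda_x)>0$ contains no open set. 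Thus your argument cannot be completed without inserting the H\"older hypothesis (or bypassing the issue via conformal invariance as the paper indicates); once that hypothesis is in place, the coupling can be carried out and your outline coincides with the strategy of \cite{DK}.
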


This result needs the H\"older regularity of the metric, counterexamples are given in~\cite{DV}. Here we can use the reinforcement~\cite{DNS1} of Verjovsky's theorem showing that Poincar\'e metric is indeed H\"older. An alternative is to apply \cite{DK} to a smooth hermitian metric on $T_\mathcal F$, and to use the conformal invariance of Brownian motion in dimension $2$. \\

We deduce the next lemma, which gives proposition \ref{expogeo}. Let $x\in U$ and $\pi: \mathbb D \rightarrow L_x$ be a uniformization such that $\pi(0)= x$. We identify $\Gamma$ with $\Gamma_x$  via these coordinates. Let $A\subset \Gamma\simeq \Gamma_x$ be the set of paths satisfying Ancona's theorem and $\delta_x := \delta (A\cap \Lambda_x)$. 

\begin{lem}\label{expexp}
Let $\delta \in \delta_x$ and $\beta : \mathbb R \to \mathbb D$ be a geodesic tending to $\delta$ at infinity. Then 
\begin{equation*}\label{beta}
   \lim_{t \to + \infty} {1 \over t} \log \vert h'_{\beta , t }  \vert  =  \lambda . 
\end{equation*}
\end{lem}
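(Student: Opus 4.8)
The goal is to transfer the exponent asymptotics that hold along Brownian trajectories (proposition~\ref{DK2}) to the geodesic $\beta$ pointing at $\delta$. The bridge between the two is Ancona's shadowing theorem~\ref{ancona} together with the speed estimate proposition~\ref{kingman}: a typical Brownian path $\gamma \in \Lambda_x$ realising the Lyapunov limit also shadows the geodesic $[0,\delta[$, so the holonomy along $\gamma$ up to time $t$ and the holonomy along $\beta$ up to a comparable time differ only by a bounded multiplicative factor, which disappears after dividing by $t$ and letting $t \to \infty$.

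\emph{First step: reduce to a path in $\Lambda_x$ shadowing $\beta$.} By definition $\delta \in \delta_x = \delta(A \cap \Lambda_x)$, so there is $\gamma \in A \cap \Lambda_x$ with $\lim_{t\to\infty}\gamma(t) = \delta$. By proposition~\ref{kingman}, $d_P(0,\gamma(t))/t \to 1$, and by Birkhoff applied to $\bar\mu$ one also has $\frac1t \log|h'_{\gamma,t}| \to \lambda$ where here $t$ is the \emph{time} parameter; combined with the speed estimate, the same limit $\lambda$ holds when we reparametrise holonomy by \emph{hyperbolic arclength} along $\gamma$, i.e.\ $\frac{1}{d_P(0,\gamma(t))}\log|h'_{\gamma,t}| \to \lambda$. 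This is the form we want to compare with $\beta$, which is parametrised by arclength.

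\emph{Second step: compare holonomies along $\gamma$ and along $\beta$ using shadowing.} Fix a time $t$ large and let $s = d_P(0,\gamma(t))$, so $s \sim t$. By Ancona's theorem~\ref{ancona}, $\gamma(t)$ lies within $c\log t$ of the geodesic ray $[0,\delta[$; let $\beta(s')$ be a nearest point, so $|s' - s|$ is bounded by $O(\log t)$ (a nearest-point projection on a geodesic in $\mathbb D$ moves by at most the distance to the geodesic). Now consider the concatenated path that runs along $\beta$ from $0$ to $\beta(s')$ and then along a geodesic segment of length $\le c\log t$ from $\beta(s')$ to $\gamma(t)$. Its holonomy from $\tau_x$ to $\tau_{\gamma(t)}$ equals $h'_{\gamma,t}$ (holonomy depends only on the homotopy class of the path rel endpoints in the leaf, and $\mathbb D$ is simply connected), while it also equals $h'_{\beta,s'}$ times the holonomy of the short bridging segment. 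The latter has derivative in $[e^{-\xi' c\log t}, e^{\xi' c\log t}]$ by the local Lipschitz bound on holonomy maps (the constant $\xi'$ from the proof of proposition~\ref{puol}, or equivalently from the compactness of $M$ and a finite atlas), hence its logarithm is $O(\log t) = o(s)$. Therefore
\[
\Bigl|\, \log|h'_{\beta,s'}| - \log|h'_{\gamma,t}| \,\Bigr| = O(\log t) = o(t),
\]
and since $s' = s + O(\log t) = t + o(t)$, dividing by $s'$ and letting $t \to \infty$ gives $\frac{1}{s'}\log|h'_{\beta,s'}| \to \lambda$. As $\beta$ is parametrised by arclength this is exactly $\lim_{t\to\infty}\frac1t\log|h'_{\beta,t}| = \lambda$, which is the assertion of the lemma.

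\emph{Main obstacle.} The delicate point is making the ``concatenation'' argument rigorous at the level of holonomy cocycles: one must check that holonomy along a leaf path depends only on the path up to leafwise homotopy with fixed endpoints (so that replacing $\gamma$ by ``$\beta$ then a short bridge'' is legitimate), and that the derivative of holonomy along a path is controlled exponentially by its hyperbolic \emph{length} with a constant uniform over $M$ — this is the $\xi'$ estimate already used in proposition~\ref{puol}, obtained from a finite foliated atlas and compactness. Once these two facts are in hand, the error terms are genuinely $O(\log t)$ and the argument closes; the speed estimate $d_P(0,\gamma(t)) \sim t$ is what guarantees the reparametrisation by arclength does not distort the exponent.
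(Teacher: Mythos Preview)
Your argument is correct and follows the same route as the paper: choose $\gamma\in A\cap\Lambda_x$ with endpoint $\delta$, use Ancona's theorem together with proposition~\ref{kingman} to locate a point $\beta(s')$ within $O(\log t)$ of $\gamma(t)$ with $s'=t+o(t)$, and compare the holonomies via the chain rule and the uniform bound $\lvert\log\lvert h'_\sigma\rvert\rvert\le C\cdot\mathrm{length}(\sigma)$ coming from compactness. The only point the paper makes more explicit is the passage from the limit along the values $s'(t)$ to the full limit in $t$: it inserts a further geodesic segment $\hat\gamma$ from $\beta(s')$ to $\beta(t)$, of length $\lvert s'-t\rvert\le \varepsilon t+c\log t$, bounds its contribution by $C\varepsilon$ after dividing by $t$, and then lets $\varepsilon\to 0$; in your write-up this is exactly one more application of the Lipschitz bound on the interval between $s'(t)$ and $t$, which you should state rather than absorb into the sentence ``this is exactly $\lim_{t\to\infty}\frac1t\log\lvert h'_{\beta,t}\rvert$''.
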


That yields for every $x \in U$ a subset of $\psi$-trajectories in $L_x$ of positive (Lebesgue) measure satisfying the limit. By Fubini's theorem, that limit is satisfied on a set of positive (Lebesgue) measure, and proposition~\ref{expogeo} follows.

\begin{proof} (lemma \ref{expexp}) Let $\epsilon > 0$. Using the three results above we get for every $\gamma \in A\cap \Lambda_x$ and $t \geq t(\gamma,\epsilon)$ a time $\tau_t \geq 0$ satisfying
\begin{equation*}\label{fog}
d_P \left( \gamma(t), \beta(\tau_t) \right) \leq c \log t \ \ \textrm{ and } \ \ \vert  \tau_t  - t   \vert \leq \epsilon t + c \log t 
\end{equation*}
and 
\begin{equation*}\label{fi}
  \lim_{t\to + \infty} \ \frac{1}{t} \log \ \lvert h_{\gamma,t} '  \rvert  = \lambda .
\end{equation*}
Observe that we can replace $[0,\delta[$ by $\beta$ in Ancona's estimates, since these geodesics tend to the same point at infinity: they are exponentially asymptotic.  Let $\tilde \gamma$ be the geodesic joining  $\gamma(t) , \beta(\tau_t)$ and let $\hat \gamma$ be the geodesic joining $\beta(\tau_t), \beta(t)$. Let $l_t$ be the length of  $\tilde \gamma$ (the length of $\hat \gamma$ is $\vert \tau_t - t  \vert$). Using the fact that  $\gamma + \tilde \gamma + \hat \gamma$ is homotopic to $\beta$, we have the chain rule
\begin{equation}\label{concat}
  h_{ \beta , t} '  =   h_{\hat \gamma , \vert \tau_t - t \vert } '  \cdot  h_{\tilde \gamma , l_t} '  \cdot h_{\gamma,t} ' .
  \end{equation}
Using similar arguments as in the proof of proposition~\ref{puol}, we obtain
\[  \vert \log \vert h_{\tilde \gamma , l_t} '   \vert  \, \vert  \leq C \cdot l_t \leq C \cdot c \log t , \]
\[  \vert   \log \vert h_{\hat  \gamma , \vert \tau_t - t \vert } '   \vert \, \vert   \leq C \cdot \vert \tau_t - t \vert \leq C \cdot ( \epsilon t + c \log t ).    \]
Taking logarithm and dividing by $t$ the chain rule  (\ref{concat}), we obtain  
\[  {\lambda - C \epsilon}  \leq  \liminf_{t \to + \infty} {1 \over t }  \log \vert h_{\beta,t} ' \vert  \leq \limsup_{t \to + \infty} {1 \over t }  \log \vert h_{\beta,t} ' \vert  \leq   {\lambda + C \epsilon} . \]
The conclusion follows by letting $\epsilon$ to zero. 
\end{proof}

\section{Levi-flats carrying the geometry of $\mathrm{Sol}$ or $\mathrm{PSL}(2,\mathbb R)$} \label{s:sol PSL}

This section is devoted to the following theorem. 

\begin {thm}\label{hypunit}
Let $M$ be one of the following $3$-manifolds: 
\begin{enumerate}
\item a hyperbolic torus bundle, 
\item the unitary tangent bundle $T^1\Sigma$ of a compact surface $\Sigma$.
\end{enumerate}
Then $M$ is not diffeomorphic to a $C^2$ Levi-flat in a surface of general type. 
\end{thm}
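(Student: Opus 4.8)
The strategy is to reduce both cases to the non-existence of Anosov Levi-flats in surfaces of general type, i.e. to Corollary~\ref{c:no Anosov levi-flat}. For this we must recognize the CR foliation of such a Levi-flat as (topologically conjugate to) the weak stable foliation of a $3$-dimensional Anosov flow. The two model geometries come with canonical Anosov flows: the suspension of the linear hyperbolic toral automorphism in case (1) (geometry $\mathrm{Sol}$), and the geodesic flow of a hyperbolic metric on $\Sigma$ in case (2) (geometry $\widetilde{\mathrm{PSL}(2,\mathbb R)}$, which is ruled out, and the euclidean/spherical cases $g\leq 1$ which are ruled out separately). So the heart of the matter is: \emph{if $\FF$ is a $C^2$, taut, codimension-one foliation with hyperbolic leaves on a hyperbolic torus bundle (resp. on $T^1\Sigma$, $g\geq 2$), then $\FF$ is topologically conjugate to the weak stable foliation of the suspension Anosov flow (resp. the geodesic flow).}

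First I would dispose of the low-genus cases of (2): if $\Sigma=\mathbb S^2$ then $T^1\Sigma=\mathbb{RP}^3$ and if $\Sigma$ is a torus then $T^1\Sigma=\mathbb T^3$; by Proposition~\ref{t:polynomial Levi-flat} the fundamental group of a Levi-flat in a surface of general type has exponential growth, which excludes both (these groups are virtually abelian, hence of polynomial growth). Thus in case (2) we may assume $g\geq 2$. Next, by Proposition~\ref{hyphyp} the CR foliation $\FF$ is hyperbolic, hence has no compact leaf $\cong\mathbb P^1$ and no transverse invariant measure is needed; more importantly, tautness (from the K\"ahler, indeed general type, hypothesis, as recalled in \S\ref{NOVI}) plus Novikov's theorem and Proposition~\ref{t:polynomial Levi-flat} show $\FF$ has no Reeb component and $\pi_1(L)\hookrightarrow\pi_1(M)$ for every leaf $L$. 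So $\FF$ is a taut, Reeb-less, $\pi_1$-injective, $C^2$ foliation of codimension one on the given $3$-manifold.

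The classification step is where the cited deep results enter. For case (2), $M=T^1\Sigma$ with $g\geq2$: by the work of Ghys--Sergiescu~\cite{GS} (combined with Matsumoto~\cite{Matsu}) a taut, Reeb-less, transversely orientable codimension-one $C^2$ foliation of a unit tangent bundle of a hyperbolic surface — once one knows it is not the fibration by fibers, which here holds because the fibers are circles and would give compact spherical or toral leaves, contradicting hyperbolicity — is topologically conjugate to the weak stable (or weak unstable) foliation of the geodesic flow; the structure theory invokes Thurston's results~\cite{Thurston} on the existence of transverse contact structures / universal circles. For case (1), $M$ a hyperbolic torus bundle: a taut codimension-one foliation transverse to (a perturbation of) the torus fibers, with no Reeb component, is forced to be (up to conjugacy) the weak stable foliation of the suspension Anosov flow — this follows from the classification of codimension-one foliations on torus bundles over the circle (the monodromy being Anosov, a foliation either has a compact toral leaf, impossible by hyperbolicity, or is conjugate to the suspension of a foliation of $\mathbb T^2$ invariant under the hyperbolic automorphism, whose only choices are the two stable/unstable linear foliations). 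In both cases one then concludes that $M$, equipped with $\FF$, is an Anosov Levi-flat in the sense of \S\ref{anoso}, and Corollary~\ref{c:no Anosov levi-flat} yields the contradiction with the surface being of general type.

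\textbf{Main obstacle.} The genuinely hard point is the topological classification of codimension-one foliations on these specific $3$-manifolds: showing that hyperbolicity of leaves plus tautness forces the foliation to be, up to topological conjugacy, the weak stable foliation of the canonical Anosov flow. This is precisely where \cite{Thurston,GS,Matsu} are indispensable, and it is delicate because one must rule out all exotic taut foliations and handle the transverse orientability/orientability normalizations (passing to finite covers, which is harmless since the general type property and Levi-flatness are inherited). Everything else — excluding small genus via growth, hyperbolicity via Proposition~\ref{hyphyp}, Reeb-lessness via tautness and Novikov — is routine given the results already established in the paper.
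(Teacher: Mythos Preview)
Your overall strategy is exactly the paper's: reduce to Proposition~\ref{RIG} (the CR foliation is Anosov) and then invoke Corollary~\ref{c:no Anosov levi-flat}; the elimination of genus $\leq 1$ in case (2) via Proposition~\ref{t:polynomial Levi-flat} is also the paper's move. However, your execution of the classification step contains two concrete gaps.

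First, you have swapped the roles of the cited theorems. Ghys--Sergiescu~\cite{GS} is the rigidity result for \emph{hyperbolic torus bundles} (case (1)), not for unit tangent bundles: it says that a transversely orientable $C^{k+2}$ foliation of $T_A$ with no compact leaf is $C^k$-conjugate to the suspension of the stable foliation. For case (2), the paper uses Thurston~\cite{Thurston,Lev} to isotope $\FF$ to a foliation transverse to the circle fibration (hence a suspension $\rho:\pi_1(\Sigma)\to\mathrm{Diff}^2_+(\mathbb S^1)$), observes that the Euler number is maximal $=\chi(\Sigma)$ because the bundle is $T^1\Sigma$, and then applies Matsumoto~\cite{Matsu} to conclude topological conjugacy with the weak stable foliation of the geodesic flow.

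Second, your exclusion of compact leaves is incomplete in both cases. In case (1) you only mention toral leaves, but Ghys--Sergiescu requires ruling out \emph{all} compact leaves. A compact leaf $C$ of genus $\geq 2$ is not excluded by hyperbolicity alone; the paper argues that $\pi_1(C)\to\pi_1(T_A)$ cannot be injective (a surface group of genus $\geq 2$ does not embed in the solvable group $\pi_1(T_A)$), so Novikov produces a Reeb component, hence a torus leaf, contradicting Proposition~\ref{hyphyp}. In case (2) your remark about ``fibers being circles'' is beside the point: the fibers are $1$-dimensional and cannot be leaves. What must be excluded are compact leaves of genus $\geq 2$; the paper does this by noting that a compact leaf in a taut foliation is incompressible (Novikov), while $T^1\Sigma$ contains no incompressible surface of genus $\geq 2$. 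Only after this can Thurston's isotopy argument be applied. Fill in these two points and your outline becomes the paper's proof.
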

 
This  is a consequence of corollary \ref{c:no Anosov levi-flat} (dynamical part) and proposition \ref{RIG}  (topological part). The proof of \ref{RIG} relies on deep rigidity theorems by Thurston \cite{Thurston}, Ghys-Sergiescu \cite{GS} and Matsumoto \cite{Matsu}. We can assume for the case (2) that the genus of $\Sigma$ is $\geq 2$ by  proposition  \ref{t:polynomial Levi-flat}. 

\begin{prop}\label{RIG}
Let $M$ be a $C^2$ Levi-flat in a surface of general type. Suppose that it is diffeomorphic to either (1) or (2)-with $\Sigma$  of genus $\geq 2$. Then $M$ is an Anosov Levi-flat. 
\end{prop}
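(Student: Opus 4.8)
The plan is to show that a $C^2$ Levi-flat $M$ in a surface of general type, when diffeomorphic to a hyperbolic torus bundle or to $T^1\Sigma$ with $g(\Sigma)\geq 2$, carries a CR foliation $\FF$ that is topologically conjugate to the weak stable foliation of an Anosov flow. The key point is that, in both cases, the ambient $3$-manifold $M$ admits a \emph{known} Anosov flow — the suspension of the hyperbolic toral automorphism in case (1), and the geodesic flow of a hyperbolic metric on $\Sigma$ in case (2) — and we must recognize $\FF$ as the weak stable foliation of (a flow conjugate to) this one. By proposition~\ref{hyphyp}, $\FF$ is hyperbolic, hence taut (the Poincar\'e metric makes it a foliation with leaves of negative curvature, and tautness follows from the K\"ahler structure as in subsection~\ref{NOVI}); by proposition~\ref{t:polynomial Levi-flat}, $\pi_1(M)$ has exponential growth and $\FF$ has no Reeb component nor vanishing cycle, so by Novikov's theory $\FF$ is a taut, Reebless, transversely orientable $C^2$ foliation of $M$ with hyperbolic leaves and $\pi_1$-injective leaves.

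The strategy is then a rigidity/classification argument for such foliations on these specific manifolds. For case (1), a hyperbolic torus bundle fibers over $\mathbb S^1$ with torus fiber; a Reebless taut foliation with hyperbolic (hence non-compact, non-toral) leaves cannot contain a compact leaf, so all leaves are planes or cylinders. One invokes the classification of codimension-one foliations of torus bundles over the circle: I would use the work of Ghys on foliations of torus bundles (and the related results of Plante), which shows that such a foliation is, up to topological conjugacy, either a linear foliation (excluded here because it would carry a transverse invariant measure, contradicting proposition~\ref{hyphyp} via subsection~\ref{ss:hyperbolicity}, or have parabolic leaves) or conjugate to the weak stable (or weak unstable) foliation of the suspension Anosov flow. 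The remaining bookkeeping is to check transverse orientability and the matching of co-orientations so that "Anosov Levi-flat" in the sense of section~\ref{anoso} applies. For case (2), on $M = T^1\Sigma$ one appeals to the circle of ideas of Ghys (on foliations of $\mathrm{PSL}(2,\mathbb{R})$-manifolds), together with the rigidity theorems of Thurston, Ghys--Sergiescu and Matsumoto cited in the statement: a taut Reebless $C^2$ foliation of $T^1\Sigma$ ($g\geq 2$) with hyperbolic, $\pi_1$-injective leaves and no transverse invariant measure is topologically conjugate to the weak stable foliation of the geodesic flow. Here the Ghys--Sergiescu rigidity of the Weil--Petersson/horocycle actions and Matsumoto's classification of such foliations in terms of the associated $\pi_1(\Sigma)$-action on the circle at infinity do the heavy lifting: the holonomy representation $\pi_1(\Sigma) \to \mathrm{Homeo}^+(\mathbb S^1)$ has Euler number $\pm\chi(\Sigma)$ (maximal), which by Matsumoto forces semi-conjugacy to a Fuchsian action, and Thurston's stability/universal circle arguments upgrade this to a topological conjugacy of the foliation with the geodesic foliation.

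Concretely, the steps in order are: (i) record that $\FF$ is $C^2$, transversely orientable (pass to a double cover if necessary, which does not affect the general-type or Levi-flat properties), Reebless and taut, with all leaves hyperbolic and $\pi_1$-injective — using propositions~\ref{hyphyp}, \ref{t:polynomial Levi-flat} and Novikov; (ii) rule out compact leaves and transverse invariant measures, again via proposition~\ref{hyphyp}; (iii) in case (1), apply the classification of Reebless foliations on torus bundles over $\mathbb S^1$ to identify $\FF$ with the weak stable foliation of the suspension Anosov flow; (iv) in case (2), extract the boundary action $\pi_1(\Sigma)\to\mathrm{Homeo}^+(\mathbb S^1)$ associated to the lifted foliation on the universal cover, compute its Euler class to be maximal, and invoke Matsumoto and Ghys--Sergiescu to conclude that $\FF$ is topologically conjugate to the weak stable foliation of the geodesic flow; (v) in both cases, check that the conjugacy can be taken leafwise smooth (standard, and in fact only topological conjugacy is needed for the definition in section~\ref{anoso}), so $M$ is an Anosov Levi-flat.

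The main obstacle I expect is step (iv): establishing that an arbitrary taut Reebless $C^2$ foliation of $T^1\Sigma$ with hyperbolic leaves must be conjugate to the geodesic foliation, rather than merely semi-conjugate or conjugate to some exotic deformation. This is exactly where one cannot argue by hand and must lean on the cited deep theorems — Thurston's universal circle and stability theory to get a well-defined boundary action, Matsumoto's rigidity to pin down its Euler number and hence its semi-conjugacy class, and Ghys--Sergiescu to rule out pathologies and promote semi-conjugacy to the desired conjugacy of foliations. Controlling the regularity of the conjugation and the compatibility of orientations is a secondary but nontrivial nuisance; the essential content is the input from three-manifold foliation rigidity. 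Once $\FF$ is identified as an Anosov weak stable foliation, theorem~\ref{hypunit} follows immediately by combining proposition~\ref{RIG} with corollary~\ref{c:no Anosov levi-flat}.
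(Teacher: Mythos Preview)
Your overall strategy is correct, and you have identified the right trio of rigidity theorems. But there are two genuine gaps, one in each case.

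\textbf{Case (1).} You write that a foliation with hyperbolic leaves ``cannot contain a compact leaf'' and that ``all leaves are planes or cylinders''. This is false: a compact Riemann surface of genus $\geq 2$ is hyperbolic. Proposition~\ref{hyphyp} only rules out compact leaves of genus $0$ or $1$. The paper's argument for excluding compact leaves in $T_A$ is different and essential: a compact leaf $C$ would have genus $\geq 2$, hence $\pi_1(C)$ contains a free nonabelian subgroup, while $\pi_1(T_A)$ is solvable; thus $\pi_1(C)\to\pi_1(M)$ cannot be injective, and Novikov then forces a Reeb component (a torus leaf), contradicting proposition~\ref{hyphyp}. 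Only after this does Ghys--Sergiescu apply. Relatedly, your remark that linear foliations are excluded ``because they carry a transverse invariant measure, contradicting proposition~\ref{hyphyp}'' is wrong: the paper explicitly notes after proposition~\ref{hyphyp} that hyperbolic CR foliations on surfaces of general type \emph{can} carry transverse invariant measures. Fortunately the Ghys--Sergiescu theorem as stated needs only the absence of compact leaves, so this detour is unnecessary.

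\textbf{Case (2).} Your route via a ``boundary action extracted from the universal cover'' and ``Thurston's universal circle'' is not the argument, and it is unclear how you would show the Euler number is maximal along that path. The paper's mechanism is more direct and is the missing idea: first rule out compact leaves (genus $\leq 1$ by proposition~\ref{hyphyp}; genus $\geq 2$ because $T^1\Sigma$ contains no incompressible surface of genus $\geq 2$, and leaves are incompressible by tautness and Novikov). Then apply Thurston's thesis result on foliations of circle bundles: a foliation of $T^1\Sigma$ without compact leaves is isotopic to one \emph{transverse to the circle fibration}, hence is a suspension of some $\rho:\pi_1(\Sigma)\to\mathrm{Diff}^2_+(\mathbb S^1)$. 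The Euler number of $\rho$ is then automatically that of the bundle $T^1\Sigma$, namely $\chi(\Sigma)$, so it is maximal by construction---no computation is needed. Matsumoto's theorem then gives the topological conjugacy to the geodesic weak stable foliation. Ghys--Sergiescu is not used in this case.
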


Our arguments actually prove that $M$ can not be immersed as a $C^2$ Levi-flat in a surface of general type. Hence $M$ can not be a finite covering of  such a Levi-flat, proving    that the geometries $\mathrm{Sol}$ and $\mathrm{PSL}(2,\mathbb R)$ do not appear.

\subsection{Hyperbolic torus bundle}

Let $\mathbb T^2 = \mathbb R^2/\Lambda$ be a torus. For any $A \in GL_2(\mathbb Z)$ let $T _A$ be the quotient of $\mathbb T^2 \times \mathbb R$ by the relation $(m,t) \sim (Am,t+1)$. That manifold is a \emph{torus bundle}  over the circle. 
It  is called \emph{hyperbolic} if  $A$ is hyperbolic, namely when the modulus of its trace is  $> 2$. In that case, $A$ has two distinct real eigenvectors with irrational slopes, these directions define the \emph{stable and unstable foliations} of $A$ on $\mathbb T^2$. In that case $T_A$ supports the geometry of $\mathrm{Sol}$. The \emph{suspensions of the stable and unstable foliations} are the foliations on $T_A$ 
respectively defined as the quotients of the product foliations on $\mathbb T^2 \times \mathbb R$. The family of maps $(m,t) \to (m, t+t')$ defines an Anosov flow on $T_A$. We use the following.

\begin{thm}[Ghys-Sergiescu, \cite{GS}] \label{GS} 
Let $M$ be an orientable hyperbolic torus bundle $T_A$ endowed with a transversally orientable $C^{k+2}$ foliation $\mathcal F$ ($0 \leq k \leq \infty$). If that foliation does not have any compact leaf, then it  is $C^{k}$-conjugated to the suspension of the stable foliation on $T_A$.
\end{thm}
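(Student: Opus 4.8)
The plan is to recover the foliation from the action of $\pi_1(T_A)$ on the leaf space of the universal cover, and then to rigidify this action using the hyperbolicity of $A$. Write $\Gamma := \pi_1(T_A)$, which fits in an exact sequence $1 \to \mathbb Z^2 \to \Gamma \to \mathbb Z \to 1$, a generator $g$ of the quotient acting on the normal subgroup $\mathbb Z^2$ by the hyperbolic matrix $A$. The model suspension foliation corresponds to the affine action of $\Gamma$ on the stable eigenline of $A$, so my goal is to show that any $\mathcal F$ as in the statement realizes exactly this affine action, up to $C^k$ conjugacy.

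First I would set up the topological preliminaries. Since $\mathcal F$ has no compact leaf it has no Reeb component (the boundary torus of a Reeb component is a compact leaf), so by Novikov's theory (recalled in Section~\ref{NOVI}) the foliation is taut, each inclusion $\pi_1(\text{leaf}) \hookrightarrow \Gamma$ is injective, and there are no vanishing cycles. The next step, and the first genuinely delicate point, is to prove that $\mathcal F$ is $\mathbb R$-covered: the leaf space $\mathcal L$ of the pulled-back foliation on $\widetilde{T_A} \cong \mathbb R^3$ is a simply connected Hausdorff $1$-manifold, hence homeomorphic to $\mathbb R$. Hausdorffness (absence of branching) is where the special structure of $\Gamma$ enters: a branching locus would, via the solvable growth of the $\Gamma$-action on $\mathcal L$, force either a compact leaf or a Reeb-type configuration, both excluded. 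This yields a holonomy representation $\rho \colon \Gamma \to \mathrm{Homeo}^+(\mathbb R)$ from which $\mathcal F$ can be reconstructed up to conjugacy.

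The heart of the argument is the identification of $\rho$. I would first show that $\rho(\mathbb Z^2)$ acts freely on $\mathbb R$: a point of $\mathcal L$ fixed by all of $\mathbb Z^2$ corresponds to a leaf invariant under a cocompact lattice in the fiber, which forces that leaf to be compact, contrary to hypothesis. By Hölder's theorem a free abelian action on the line is conjugate to an action by translations, so after a topological change of transverse coordinate $\rho|_{\mathbb Z^2}$ is given by a homomorphism $\tau \colon \mathbb Z^2 \to \mathbb R$, with $\rho(m) = (x \mapsto x + \tau(m))$. The relation $\rho(g)\rho(m)\rho(g)^{-1} = \rho(Am)$ then forces $\tau \circ A = c\,\tau$, where $c$ is the multiplier of $\rho(g)$ at its fixed point; thus $\tau$ is a coeigenvector of $A$, and since $A$ is hyperbolic with eigenvalues $\kappa^{\pm 1}$ the line $\ker \tau$ is precisely the stable (resp.\ unstable) eigendirection and $c = \kappa^{\mp 1} \neq 1$. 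Because $\mathcal F$ has no compact leaf there is no $\rho$-invariant interval and no exceptional minimal set, so the semiconjugacy provided by Hölder is a genuine conjugacy; comparing with the model identifies $\mathcal F$, up to topological conjugacy, with the suspension of the stable foliation.

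The last and hardest step is differentiable rigidity: upgrading this topological conjugacy to a $C^k$ diffeomorphism, where the two extra derivatives ($C^{k+2}$) and the hyperbolicity $c \neq 1$ are essential. The holonomy of $\mathcal F$ along $g$ is a $C^{k+1}$ contraction of an interval transversal with an isolated fixed point and nontrivial multiplier; by a Sternberg–Sacksteder type linearization one conjugates it $C^k$-smoothly to $x \mapsto cx$, the loss of regularity in the linearization accounting exactly for the passage from $C^{k+2}$ to $C^k$. Globalizing this leafwise-smooth linearization equivariantly over $\Gamma$, using that the $\mathbb Z^2$-holonomy is already $C^{k+1}$-close to translations and is straightened by the same coordinate, produces the desired $C^k$ conjugacy to the stable suspension. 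I expect this differentiable rigidity step to be the main obstacle: the topological model is pinned down by relatively soft arguments, whereas controlling the regularity of the linearizing coordinate uniformly along the dense noncompact leaves, and matching the abelian and cyclic holonomies simultaneously, is the technical core of the theorem.
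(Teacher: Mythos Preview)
The paper does not prove this theorem at all: it is quoted verbatim as an external result of Ghys--Sergiescu \cite{GS} and used as a black box in the deduction of case~(1) of Proposition~\ref{RIG}. There is therefore no proof in the paper to compare your proposal against.

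That said, your sketch is a reasonable outline of the Ghys--Sergiescu argument itself: pass to the leaf space of the universal cover, show the action of $\pi_1(T_A)$ on that line is conjugate to the affine model coming from the stable eigenline of $A$, and then upgrade to $C^k$ via a Sternberg-type linearization of the hyperbolic holonomy. The two places where your write-up is thin are exactly the two hard steps in \cite{GS}: the proof that the leaf space is Hausdorff (you gesture at ``solvable growth'' forcing a compact leaf, but this needs a real argument), and the globalization of the linearizing coordinate along the dense leaves while keeping $\mathbb Z^2$-equivariance. If you intend to include a proof rather than cite the result, those are the parts that need to be written out carefully; for the purposes of the present paper, however, a citation suffices.
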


Let us deduce case (1) of proposition \ref{RIG}. Let $M$ be a hyperbolic torus bundle $T_A$ and assume that it is diffeomorphic to a $C^2$ Levi-flat in a surface of general type. Its CR foliation $\FF$ is of class $C^2$ by \cite{BF}. Up to take a double covering, we can assume that $M$ is orientable and $\FF$ is transversally orientable. Then, in view of theorem \ref{GS}, it suffices to prove that $\FF$ has no compact leaf. Suppose to the contrary that $\mathcal F$ has a compact leaf, denoted $C$. It has genus $g \geq 2$ by the general type property (see theorem \ref{hyphyp}).  Then the natural map $\pi_1(C) \to \pi_1(M)$ is not injective, because $g \geq 2$ and the topology of a torus bundle $T_A$ is not rich enough (solvability of the Lie group). Novikov's theorem (see section \ref{NOVI}) then implies that $\FF$ has a compact leaf of genus $0$ or $1$, which is excluded by theorem \ref{hyphyp}.

\subsection{Circle fibrations} 

Let $\Sigma$ be a compact oriented surface of genus $g \geq 2$. Let $\pi_1(\Sigma)$ denote its fundamental group and $\text{Diff}^r_+(\mathbb S^1)$ be the group of orientation preserving $C^r$ diffeomorphisms of the circle ($0 \leq r \leq \infty$). Let $\rho : \pi_1(\Sigma) \to \text{Diff}^r_+(\mathbb S^1)$ be a representation and $eu(\rho)$ denote its Euler number (see \cite{Ghys3, Scott}). If $X$ denotes the quotient of $\mathbb H \times \mathbb S^1$ by the action 
\[  \gamma (p,\xi) = (\gamma p , \rho(\gamma) \xi) \ \ , \ \ \gamma \in \pi_1(\Sigma)  \]
then $eu(\rho)$ gives the amount of twisting of the $\mathbb S^1$-bundle $X$. Let $\FF$ be the quotient of the horizontal foliation of $\mathbb H \times \mathbb S^1$. The Milnor-Wood inequality states that $\vert eu(\rho) \vert \leq - \chi(\Sigma) =  2g-2$. The maximal value is obtained when $\rho$ embeds $\pi_1(\Sigma)$ as a discrete subgroup of $\text{PSL}(2,\mathbb R)$, i.e. when $X$ is the unitary tangent bundle $T^1 \Sigma$. A converse is true in class $C^2$, see \cite{Matsu}:

\begin {thm} [Matsumoto] \label{matsu}
Let  $\rho, \rho' : \pi_1(\Sigma) \to \text{Diff}^2_+ (\mathbb S^1)$ be representations satisfying $eu(\rho) = eu(\rho') \in \{ - \chi(\Sigma) , \chi(\Sigma) \}$. Then $\rho$ and $\rho'$ are topologically conjugated.
\end{thm}

In other words, if $\vert eu(\rho) \vert$ is maximal, then $X$ is homeomorphic to $T^1\Sigma$ and $\FF$ can be topologically conjugated to the weak stable foliation of the geodesic flow for \emph{any} metric of negative curvature on $\Sigma$ (in particular the Poincar\'e metric). \\

Let us prove case (2) of proposition \ref{RIG}. Let $M = T^1\Sigma$ be the unitary tangent bundle of a compact   surface of genus $g \geq 2$. Assume it is diffeomorphic to a $C^2$ Levi-flat in a surface of general type, let $\FF$ be its CR foliation. 
 Theorem \ref{hyphyp} ensures that $\FF$ has no compact leaf of genus $1$. Moreover, it has no compact leaf of genus $\geq 2$. Indeed, the unitary tangent bundle to a hyperbolic compact closed surface do not contain incompressible surfaces of genus $\geq 2$, and by Novikov's theorem, a compact leaf is incompressible, since the CR foliation is taut. Hence, by Thurston's argument (see \cite{Thurston}, \cite{Lev}), there exists an isotopy between $\FF$ and a foliation transverse to the circle fibration over $\Sigma$. In particular, the foliation $\FF$ is a suspension given by a morphism $\rho : \pi_1(\Sigma) \to \text{Diff}^2_+ (\mathbb S^1)$. Moreover, the Euler number  $eu(\rho)$ is maximal, equal to $\chi(\Sigma)$. Theorem \ref{matsu} then implies that $\FF$ is topologically conjugated to the weak stable foliation of the geodesic flow for the Poincar\'e metric on $\Sigma$, as desired.

\subsection{Remark} 

Let $M$ be a (hypothetical) smooth Levi-flat in $\mathbb P^2$. By the classical results of section \ref{NOVI}, $M$ supports neither the geometry of $\mathbb S^3$ and $\mathbb S^2 \times \mathbb S^1$ (K\"ahler surface) nor of $\mathbb R^3$ and $\mathrm{Nil}$ ($M$ does not have any transverse invariant measure, see \cite{IM}). We claim that an analytic Levi-flat in $\mathbb P^2$ can not support the geometry of $\mathrm{Sol}$ or $\mathrm{PSL}(2,\mathbb R)$. First, by using \cite{Ghys3, GS} and the same arguments as before, the CR foliation of $M$ has a transverse projective structure. Now, if $M$ is analytic, then every connected component of $\mathbb P^2 \setminus M$ is Stein, and its CR foliation extends to a singular holomorphic foliation of $\mathbb P^2$ \cite{LN}. In our case it extends to a transversally projective singular holomorphic foliation on $\mathbb P^2$. But such a  foliation has an invariant algebraic curve, which is incompatible with the existence of a minimal set~\cite{CLS}.

\section{Hausdorff dimension of minimal sets}

Let $M$ be a minimal set in a complex compact surface $S$. Its holonomy pseudo-group is \textit{not discrete} if there exist transversal discs $\tau'  \subset \subset \tau$ and a sequence of holonomy maps $h_n : \tau' \rightarrow \tau$ different from identity which converges uniformly to identity.  Otherwise, it is called \textit{discrete}. We say that $M$ is discrete or not.  That notion of discreteness  was  introduced in \cite{Ghys} and \cite{Nakai}. Let us recall that if $\FF$ is minimal, then either there exists a transverse invariant measure or there exists a unique harmonic measure and its Lyapunov exponent is negative  \cite{DK}.

\begin{thm} \label{t: dimh}
Let $M$ be a  minimal set in a complex compact surface $S$. Assume that $M$ is discrete and does not support any transverse invariant measure. Let $m$ be a metric on the tangent bundle $T_\FF$ and $\mu$ be the unique harmonic measure. Then 
\[ \dim_{H} M \geq \frac{ h } { |\lambda| }  , \]
where $h$ denotes the Kaimanovich entropy of $m$, and $\lambda$ denotes the Lyapunov exponent of $\mu$.
\end{thm}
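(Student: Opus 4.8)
The plan is to bound the Hausdorff dimension of $M$ from below by producing, for Lebesgue-almost every transverse point, a control on the mass that any small transverse ball can carry, via a Frostman-type argument applied to the harmonic measure $\mu$. The key players are the heat-kernel/harmonic measure on the lamination, the discreteness hypothesis (which turns the holonomy pseudo-group into a genuine group action along Brownian paths), and the identification of the ratio $h/|\lambda|$ as the transverse dimension of the harmonic measure, in the spirit of the dimension formula for random walks of Ledrappier \cite{L2, L4}.

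First I would set up the conditional measures of $\mu$ on transversals: writing $\mu$ locally as $H_j(z,t)\,i\,dz\wedge d\bar z \otimes \nu_j$ (the local structure recalled in subsection~\ref{HC}), the transverse component $\nu_j$ is the object whose dimension I want to estimate, and $\dim_H M \geq 1 + \dim_H \nu_j$ since the leaf direction contributes a full complex dimension (two real dimensions); actually the cleanest statement is $\dim_H M \geq 2 + \dim_H\nu$, but the leaf being a Riemann surface already gives at least a $2$-dimensional contribution transverse to the fibres, so it suffices to show $\dim_H \nu \geq h/|\lambda|$ and assemble the pieces carefully. Next I would run the Brownian motion $W_x$ on the leaf through $x$: for $\bar\mu$-a.e. path $\gamma$, theorem~\ref{limexplya} gives $\frac1t\log|h'_{\gamma,t}| \to \lambda < 0$, so the holonomy contracts the transversal at exponential rate $|\lambda|$ along almost every path; meanwhile the Kaimanovich entropy $h$ measures, by its very definition as the exponential growth rate of the number of distinguishable Brownian paths up to time $t$ (equivalently, the exponential decay of the Radon–Nikodym cocycle of the harmonic measure class under the holonomy group — here is where discreteness is essential, to make "number of holonomy maps visited up to time $t$" meaningful and to identify $h$ with an entropy of the transverse action rather than merely a leafwise quantity). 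Combining: by time $t$ the Brownian path has spread the point $x$ over roughly $e^{ht}$ holonomy images, each confined to a transverse ball of radius $\approx e^{-|\lambda|t}$; a standard covering/counting estimate then forces $\nu\big(B(\cdot, e^{-|\lambda|t})\big) \lesssim e^{-ht}$ for a positive-measure set of points and all large $t$, whence $\dim_H \nu \geq h/|\lambda|$ by the mass distribution principle.

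To make the entropy step rigorous I would use the entropy theory of the measure-preserving system $(\Gamma,(\sigma_t),\bar\mu)$ (subsection "Harmonic measure and Garnett's theory"): the Kaimanovich entropy $h$ is the rate of the relative entropy $H(W_x^{(t)} \mid \text{hitting measure})$, and discreteness lets me realize the leafwise heat flow as a random walk on the holonomy group with a well-defined stationary measure $\nu$ on the transversal, so that $h$ becomes the Furstenberg/Kaimanovich entropy of that random walk and $|\lambda|$ its Lyapunov exponent, reducing the claim to the known lower bound $\dim_H(\text{stationary measure}) \geq h/|\lambda|$ (Ledrappier). The main obstacle, and the part that needs genuine care, is precisely this passage from the \emph{continuous-time leafwise Brownian motion on a compact lamination} to a \emph{discrete random walk on a countable group} acting on a transversal: one must discretize time, control the geometry of the foliated charts uniformly (using compactness of $M$ and of the space of uniformizations, theorem~\ref{compactness}), check that the holonomy cocycle is integrable and that the discretized walk has finite entropy, and verify that discreteness genuinely prevents the "images" from overlapping so badly that the counting estimate collapses. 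Once that dictionary is in place, the Frostman argument is routine; everything hinges on the discreteness hypothesis being exactly what is needed to keep the transverse dynamics a bona fide group action.
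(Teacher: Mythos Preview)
Your overall instinct---that the transverse Hausdorff dimension should be at least $h/|\lambda|$ by an entropy/exponent argument \`a la Ledrappier---is sound, and the paper even remarks that a dimension formula for the harmonic measure could be established here. But the route you sketch is not the one the paper takes, and your sketch has a real gap at the Frostman step. You write that the $e^{ht}$ holonomy images, each sitting in a transverse ball of radius $\approx e^{-|\lambda|t}$, ``force $\nu(B(\cdot,e^{-|\lambda|t}))\lesssim e^{-ht}$ by a standard covering/counting estimate.'' That implication is not automatic: to get a mass upper bound on a single ball you would need to know that the $e^{ht}$ images are pairwise disjoint \emph{and} carry comparable $\nu$-mass, and neither is given for free. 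Your proposed reduction to Ledrappier's theorem also does not apply directly---his results concern random walks on linear groups acting on projective space, not holonomy pseudo-groups of laminations---and the ``dictionary'' you describe (discretize time, realize the Brownian motion as a random walk on a countable group) is precisely the hard part, which you leave as a black box.

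The paper avoids the harmonic-measure dimension entirely and instead constructs an explicit Cantor set inside $M\cap\tau_x$ whose Hausdorff dimension can be read off. Concretely: using the geometric interpretation of Kaimanovich entropy (the number of lattice sites carrying at least half the Brownian mass at time $n$ is $\gtrsim e^{nh}$), together with the Lyapunov estimate $|h'_{\gamma,n}|\approx e^{n\lambda}$ and Koebe distortion, one produces a family $\{h_{x,z}\}_{z\in I_n}$ of at least $e^{n(h-2\varepsilon)}$ holonomy maps sending a fixed disc $\mathbb D(r)\subset\tau_x$ into itself, each with contraction $\approx e^{n\lambda}$. These images need not be disjoint; discreteness enters exactly here, and not in the way you suggest: it guarantees that the set $\mathcal H$ of holonomy maps $\mathbb D(r)\to\tau_x$ with derivative bounded away from $0$ and $\infty$ is \emph{finite}, so after quotienting $I_n$ by right composition with $\mathcal H$ one extracts $I_n'\subset I_n$ with $|I_n'|\geq e^{n(h-3\varepsilon)}$ and pairwise disjoint images. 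The maps $\{h_{x,z}\}_{z\in I_n'}$ then form an iterated function system, and the Hausdorff dimension of its limit set is $\geq \log|I_n'|/(-\lambda_n+2\kappa)\geq (h-3\varepsilon)/(|\lambda|+3\varepsilon)$ by the standard IFS estimate.

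So: your approach is a legitimate alternative and could likely be made to work, but it requires a genuine Shannon--McMillan--Breiman argument for the transverse measure (not just a counting of images), and your invocation of discreteness is too vague---it does not turn the pseudo-group into a group, it controls the multiplicity of near-identity holonomies. The paper's IFS construction is both more elementary and more self-contained, trading the measure-theoretic machinery for a direct geometric argument.
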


In the context of random walks on linear groups, the dimension of the harmonic measure is equal to entropy/exponent \cite{L2, L4}. Such a ratio also appears  for the dimension of invariant measures for dynamical systems, see e.g. \cite{Mane} for rational maps  on $\mathbb P^1$. It is possible to establish an analogous formula in our context, however we only need a lower estimate for the Hausdorff dimension.

\subsection{Kaimanovich entropy}  

We refer to \cite{K2}, \cite{K}, \cite{L3}. Let $M$ be a  minimal set and $m$ be a metric on $T_\FF$.  The \emph{Kaimanovich entropy} is defined by the following theorem. Recall that $p_m(x,y,t)$ denotes the heat kernel on the leaf $L_{x}$.

\begin{thm}
There exists $h_m \geq 0$ such that for $\mu$-a.e. $x\in M$, 
\begin{equation} \label{eq: entropy} h_m  = \lim _{t\rightarrow \infty} -\  \frac{1}{t} \int _{L_x} p_m(x,y,t) \log p_m(x,y,t) \mathrm{vol}_m (dy) .  
\end{equation}
Moreover, if $M$ has no transverse invariant measure, then $h_m >0$. 
\end{thm}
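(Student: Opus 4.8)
The plan is to prove the two assertions of the theorem separately: existence and $\mu$-a.e.\ constancy of the limit $h_m$ via a subadditive ergodic argument on the Brownian path space, and strict positivity (under the absence of a transverse invariant measure) via the non-triviality of the foliated Poisson boundary.

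\emph{Existence and non-negativity.} Set $H_x(t):=-\int_{L_x}p_m(x,y,t)\log p_m(x,y,t)\,\mathrm{vol}_m(dy)$ and $E(t):=\int_M H_x(t)\,d\mu(x)$. First I would check that $H_x(t)$ is finite for $t\ge 1$ and that $E(t)\ge -C'$: since $M$ is compact the leaves have uniformly bounded geometry, so Gaussian heat-kernel bounds give $\sup_{w}p_m(w,w,t)\le C$ and hence $H_x(t)\ge-\log\sup_w p_m(w,w,t)\ge-\log C$ for $t\ge1$. The core step is to realize the integrand as a near-subadditive cocycle on the path space $\Gamma=\bigcup_{x\in M}\Gamma_x$: for $\gamma\in\Gamma_x$ put $c_t(\gamma):=-\log p_m(x,\gamma(t),t)$. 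Starting from the Chapman--Kolmogorov identity $p_m(x,z,t+s)=\int_{L_x}p_m(x,y,t)p_m(y,z,s)\,\mathrm{vol}_m(dy)$, restricting the integral to a unit ball around $\gamma(t)$ and using a leafwise gradient (parabolic Harnack) estimate — valid with constants depending only on the uniform curvature bound and on $t\ge1$ — I would derive $p_m(x,\gamma(t+s),t+s)\ge v_0c_0^2\,p_m(x,\gamma(t),t)\,p_m(\gamma(t),\gamma(t+s),s)$, i.e. $c_{t+s}(\gamma)\le c_t(\gamma)+c_s(\sigma_t\gamma)+C$ for all $t,s\ge1$, with $C$ independent of $\gamma$. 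Since $\int_\Gamma c_t\,d\bar\mu=E(t)$ and $\bar\mu$ is $(\sigma_t)$-invariant and ergodic (ergodicity of the harmonic measure $\mu$ passes to $\bar\mu$ by Garnett's theory), Kingman's subadditive ergodic theorem gives $\tfrac1tc_t(\gamma)\to h_m$ for $\bar\mu$-a.e.\ $\gamma$ and in $L^1(\bar\mu)$, with $h_m=\lim_t E(t)/t=\inf_{t\ge1}(E(t)+C)/t\ge 0$ (the last inequality from $E(t)\ge-C'$). A routine uniform-integrability argument, again using the Gaussian bounds on $p_m$, upgrades this to the $\mu$-a.e.\ convergence of $\tfrac1tH_x(t)=\tfrac1t\int_{\Gamma_x}c_t\,dW_x$ to $h_m$, which is exactly formula~(\ref{eq: entropy}).

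\emph{Positivity in the absence of a transverse invariant measure.} In that case the foliation carries no foliated cycle, so by the Ahlfors construction recalled in subsection~\ref{FC} it has no parabolic leaf, and it has no compact rational (elliptic) leaf either; hence $\FF$ is hyperbolic and every leaf is uniformized by $\mathbb D$. By conformal invariance of two-dimensional Brownian motion, the leafwise motion lifted to the universal cover $\mathbb D$ of a leaf converges almost surely to a point of $\partial\mathbb D$, with a non-degenerate exit distribution. This yields a non-constant tail-measurable function on $\Gamma$ — a measurable section of the boundary bundle $\{\partial\widetilde{L_x}\}_{x\in M}$ — so the Poisson boundary of the foliated Brownian motion is non-trivial. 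By Kaimanovich's entropy criterion (\cite{K}), $h_m=0$ is equivalent to triviality of that Poisson boundary; therefore $h_m>0$. (Alternatively, since the Lyapunov exponent is strictly negative here by \cite{DK}, the transverse holonomy germs along $\bar\mu$-a.e.\ path contract, which again exhibits a non-trivial tail $\sigma$-algebra.)

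\emph{Main obstacle.} The delicate point is the near-subadditive cocycle estimate $c_{t+s}\le c_t+c_s\circ\sigma_t+C$: it requires uniform, $x$-independent control of the leafwise heat kernels on unit balls and over the whole time range $[1,\infty)$, which is where compactness of $M$, the bounded geometry of the leaves, and the corresponding Li--Yau/Harnack estimates enter. Once this is in place, Kingman's theorem and the positivity step are comparatively soft.
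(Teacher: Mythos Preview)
The paper does not give its own proof of this theorem; it is quoted from Kaimanovich \cite{K2,K} and Ledrappier \cite{L3}. Your existence argument via the near-subadditive cocycle $c_t(\gamma)=-\log p_m(\gamma(0),\gamma(t),t)$ and Kingman's theorem is precisely the standard route of those references and is correct as sketched; the uniform Harnack/Li--Yau input you single out is indeed the technical crux, and compactness of $M$ supplies the bounded geometry that makes the constants uniform.

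Your positivity argument, however, has a genuine gap. The boundary limit of a lifted Brownian path is a function on the space of paths in $\mathbb D$, not on $\Gamma_x$: to obtain a tail-measurable function on $\Gamma_x$ you must quotient by the deck group $\pi_1(L_x)$, and what survives is only the $\pi_1(L_x)$-orbit of the limit point in $\partial\mathbb D$. If $\pi_1(L_x)$ happens to act ergodically on $(\partial\mathbb D,\mathrm{Leb})$ --- equivalently, if the leaf $L_x$ is Liouville --- this orbit map is $W_x$-a.e.\ constant and the tail function you produce is trivial. Nothing in the hypotheses excludes this: hyperbolic surfaces of infinite area can be Liouville (e.g.\ amenable covers of closed hyperbolic surfaces), and you have not shown that $\mu$-a.e.\ leaf avoids this. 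Your alternative via the negative Lyapunov exponent from \cite{DK} is a sound heuristic, but contraction of holonomy along paths does not by itself manufacture a non-trivial tail observable; the implication $\lambda<0\Rightarrow h_m>0$ actually goes through Kaimanovich's characterization rather than around it.

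The argument in \cite{K} is different and sidesteps the Liouville issue entirely: Kaimanovich proves that $h_m=0$ if and only if the harmonic measure $\mu$ is \emph{completely invariant}, i.e.\ its local leafwise densities $H(\cdot,t)$ in the decomposition of subsection~\ref{HC} are constant along leaves. That forces $\mu$ to be a foliated cycle, hence to arise from a transverse invariant measure --- contradicting the hypothesis. This is the route to take for the second assertion.
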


If $\mu$-almost every leaf is the unit disc and $m$ is the leafwise Poincar\'e metric of curvature $-1$, then $h_m = 1$. We shall use the following geometrical interpretation of the entropy: $h_m$ is the exponential growth of separated positions for Brownian motion. To be more precise, introduce the following definition. A subset $Z \subset L_x$ is a \emph{$(C,D)$-lattice} if its points are $C$-separated and if any point of $L_x$ lies at a distance $\leq D$ from $Z$. We define below the sequence of measures $(M_n)_n$ on the lattice $Z$.

\begin{prop} \label{l: signification of entropy}
 Let $x \in M$ generic for $\mu$. Let $Z$ be a lattice in $L_x$ and $(Z_n)_n$ be subsets of $Z$ such that $M_{n} (Z_n) \geq 1/2$. Then
\[ \liminf_{n\rightarrow \infty} \frac{1}{n} \log \  \lvert Z_n \rvert \geq h . \] 
\end{prop}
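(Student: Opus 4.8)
The plan is to express the Kaimanovich entropy via the heat kernel as in~(\ref{eq: entropy}) and then to discretize the Brownian motion at integer times, transferring the continuous statement to the lattice $Z$. First I would recall the construction of the measures $M_n$: at time $n$, the Brownian motion issued from $x$ reaches a point $\gamma(n)\in L_x$, and one sets $M_n(\{p\}) := W_x\big(\gamma : \gamma(n) \in V_p\big)$ where $\{V_p\}_{p\in Z}$ is a Borel partition of $L_x$ subordinate to $Z$ (each $p\in V_p$, $\mathrm{diam}\, V_p \leq 2D$). Equivalently $M_n = (e_n)_* W_x$ pushed to $Z$ via the nearest-point map $e_n(\gamma) = $ point of $Z$ closest to $\gamma(n)$. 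The heat kernel entropy~(\ref{eq: entropy}) says that the density $p_m(x,\cdot,n)$ of the time-$n$ distribution satisfies $-\frac1n \int p_m \log p_m \,\mathrm{vol}_m \to h$; since $Z$ is a $(C,D)$-lattice, each cell $V_p$ has volume bounded above and below by constants depending only on $C,D$ and the bounded geometry of the leaves, so the discrete entropy $H(M_n) := -\sum_{p} M_n(p)\log M_n(p)$ satisfies $\frac1n H(M_n) \to h$ as well (the discretization changes the entropy by at most $O(1) + $ the entropy of the volume-density within cells, both $o(n)$; one also needs that $p_m(x,\cdot,n)$ does not concentrate too much, which follows from the on-diagonal heat kernel upper bound $p_m(x,x,n) \leq C n^{\text{const}}$ or simply the Gaussian-type bounds valid on manifolds of bounded geometry).

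Next, the combinatorial heart of the argument: if $Z_n\subset Z$ carries at least half the mass of $M_n$, then $|Z_n|$ must be at least roughly $e^{nh}$. This is a standard consequence of the bound $H(M_n) \leq \log|\mathrm{supp}\,M_n| $ applied after restricting: writing $M_n = \tfrac12 M_n^{in} + \tfrac12 M_n^{out}$ with $M_n^{in}$ supported on $Z_n$ (after renormalization), concavity of entropy gives $H(M_n) \leq \log 2 + \tfrac12 H(M_n^{in}) + \tfrac12 H(M_n^{out})$, hence $H(M_n^{in}) \geq 2H(M_n) - \log 2 - H(M_n^{out})$. Since $H(M_n^{out}) \leq \log|Z| \leq \log|Z_n| + \log(|Z|/|Z_n|)$, and $|Z|\cap B_{L_x}(x,\text{range})$ grows at most exponentially with a universal rate (bounded geometry), one gets $H(M_n^{in}) \leq \log|Z_n|$ directly, so it suffices to control $H(M_n^{in})$ from below. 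Here I would note that $M_n$ is essentially supported, up to mass $<1/4$, on a ball $B_{L_x}(x, 2n)$ of radius $O(n)$ (Brownian motion travels at linear speed, cf. Proposition~\ref{kingman}), on which $|Z| \leq e^{O(n)}$; combining this with $\frac1n H(M_n)\to h$ and the splitting inequality yields $\liminf_n \frac1n\log|Z_n| \geq h$.

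I expect the main obstacle to be the passage from the \emph{heat-kernel} (differential) entropy in~(\ref{eq: entropy}) to the \emph{discrete} Shannon entropy $H(M_n)$ of the lattice measures, i.e. showing $\frac1n H(M_n)\to h$. One must quantify that the continuous density $p_m(x,\cdot,n)$, when averaged over lattice cells, loses only a sublinear amount of entropy; this requires (i) two-sided heat kernel estimates on leaves of bounded geometry to control the local fluctuation of $\log p_m$ over a cell of bounded diameter, and (ii) a sublinear a priori bound on $\int p_m |\log p_m|$ outside a ball of radius $O(n)$, which again follows from Gaussian upper bounds. Once this comparison $\bigl|\,\tfrac1n H(M_n) - \tfrac1n\int p_m\log\tfrac1{p_m}\,\mathrm{vol}_m\,\bigr|\to 0$ is in hand, the rest is the elementary concavity/counting argument above. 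A secondary, purely technical point is the measurable choice of the partition $\{V_p\}$ and the verification that $M_n$ is well-defined $\mu$-a.e.\ and agrees with the definition used in the body of the paper; I would handle this by Fubini as in the construction of $\bar\mu$.
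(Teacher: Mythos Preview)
The paper does not actually prove this proposition: it is stated as a geometric interpretation of the Kaimanovich entropy, with references to \cite{K2,K,L3}, and only the construction of $M_n$ (via Voronoi tiles and $\pi_*B_n$) is spelled out afterwards. So there is no ``paper's proof'' to compare against; your outline is a plausible reconstruction, but its combinatorial core contains a genuine gap.

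The problem is the step from $\tfrac1n H(M_n)\to h$ to $\liminf \tfrac1n\log|Z_n|\ge h$. Knowing only the \emph{mean} entropy $H(M_n)=-\sum_z M_n(z)\log M_n(z)\sim nh$ is not enough. Consider a probability measure putting mass $1/2$ on a single atom and spreading the remaining $1/2$ uniformly over $e^{2nh}$ other atoms: then $H(M_n)=\log 2+nh\sim nh$, yet $Z_n:=\{\text{the atom}\}$ satisfies $M_n(Z_n)=1/2$ and $|Z_n|=1$. Your concavity splitting gives only
\[
p\,\log|Z_n|\ \ge\ p\,H(M_n^{in}) \ = \ H(M_n)-H(p)-(1-p)H(M_n^{out}),
\]
and $(1-p)H(M_n^{out})$ can be of order $n$ (it equals $nh$ in the example above), so nothing useful remains. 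The observation that the support lies in a ball of radius $O(n)$ does not help: the lattice in such a ball has cardinality $e^{Cn}$ with $C$ possibly much larger than $h$, so $H(M_n^{out})\le Cn$ is too weak.

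What is actually needed --- and what Kaimanovich proves --- is the pointwise Shannon--McMillan--Breiman type statement: for $W_x$-a.e.\ path,
\[
-\frac{1}{n}\log p_m\bigl(x,\gamma(n),n\bigr)\ \longrightarrow\ h.
\]
Using Harnack (or the local heat-kernel regularity you mention) to compare $p_m(x,\cdot,n)$ across a Voronoi tile of bounded diameter, this transfers to $-\tfrac1n\log M_n(\pi(\gamma(n)))\to h$ a.s. Hence for every $\varepsilon>0$ the set $\{z:M_n(z)>e^{-n(h-\varepsilon)}\}$ has $M_n$-mass tending to $0$; if $M_n(Z_n)\ge 1/2$, then for large $n$ at least mass $1/4$ of $Z_n$ lies where $M_n(z)\le e^{-n(h-\varepsilon)}$, giving $|Z_n|\ge \tfrac14 e^{n(h-\varepsilon)}$. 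Your discretization step (continuous density versus lattice masses) is exactly what is needed for this transfer, but you should reorganize the argument around the a.s.\ convergence rather than the mean entropy.
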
 

To define $M_n$ we introduce $T_z  : = \cap_{z' \in Z} \  \{   d(y,z) \leq d(y,z')   \}$, these are the tiles of the Voronoi tesselation associated to the lattice.
Let $\pi$ be the projection $L_x \rightarrow Z$  (defined a.e.) and $B_n$ be the distribution of Brownian motion starting at $x$ at the time $n$ (with density $ p_m (x,y,n) \mathrm{vol}_m (dy)$). Then $M_n$ stands for the probability measure  $\pi_* B_n$. As announced, proposition \ref{l: signification of entropy} can be made more precise: for any lattice $Z$ in a generic leaf, there exists $(Z_n)_n$ such that $M_n(Z_n) \geq 1/2$ and $\frac{1}{n} \log \  \lvert Z_n \rvert \to h$.

\subsection{Proof of theorem \ref{t: dimh}} 

Let $M$ be a discrete  minimal set with no transverse invariant measure.  We fix $\varepsilon >0$, $x\in M$   and $\tau_x$ a transversal to $\FF$ at $x$. It suffices to prove that $\dim_{H} M \cap \tau_x \geq  \frac{h -3\varepsilon}{|\lambda|+3\varepsilon}$. Let $\mathbb D(r) \subset \tau_x$ denote the disc centered at zero of radius $r$, we put a subscript to indicate a center. By Koebe distortion theorem, there exists $\kappa> 0$ such that 
\[ \mathbb D_{h(0)} (e^{-\kappa} |h'(0)| \rho ) \subset h (\mathbb D (\rho) ) \subset \mathbb D_{h(0)} (e^{\kappa} |h'(0)| \rho ) \] 
for any holomorphic injective function defined on $\mathbb D(2 \rho)$. We need to control the domain of definition of holonomy maps. Proposition 2.1 of \cite{DK} asserts that for $\mu$-a.e. $x \in M$ and $W_x$-a.e. $\gamma \in \Gamma_x$, there exist $r_\gamma (\epsilon) >0$ and $t_\gamma (\epsilon)\geq 0$ such that for every $r \leq r_\gamma(\epsilon)$ and $t \geq t_\gamma(\epsilon)$:
\begin{enumerate}
\item[-] $h_{\gamma,t}$ is well defined from $\mathbb D  (r)  \subset \tau_x$ to $\tau_{\gamma(t)}$,
\item[-] $h_{\gamma,t}$ satisfies $e^{t (\lambda - \epsilon) } \leq \lvert h'_{\gamma,t}  (0) \rvert \leq e^{t (\lambda + \epsilon) }$.
\end{enumerate}
Let us fix $x$ generic for $\mu$, $r_0$ small and $t_0$ large  such that
\[ \Gamma_{\epsilon} := \{ \gamma \in \Gamma_x \ , \ r_\gamma(\epsilon) \geq 2 r_0 \, , \, t_\gamma(\epsilon) \leq t_0 \} \]
satisfies $W_x (\Gamma_{\epsilon}) \geq 1/2$.  Let  $r:= r_0 e^{-2\kappa} / 8$ ($\kappa$ is the distortion constant). The next proposition is proved in subsection \ref{ss:pseparated}.

\begin{prop}\label{p: separated}  For $n$ large enough there exist
\begin{enumerate}
\item[(i)] a set $I_n' \subset L_x \cap \tau_x$ of cardinality larger than $e^{n(h-3\epsilon)}$,
\item[(ii)] a   negative number $\lambda_n \in [n (\lambda-2\epsilon),n(\lambda + 2\epsilon)]$
\end{enumerate}
such that for every $z \in I_n'$, 
\begin{enumerate}
\item[(1)] $\mathbb D_z(r e^{\lambda_n} e^{-\kappa})  \subset  h_{x,z} (\mathbb D(r)) \subset \mathbb D_z (r e^{\lambda_n+\log 2} e^\kappa)$,
\item[(2)] $\{ h_{x,z}(\mathbb D(r)) \}_{z \in I_n'} \subset \tau_x$ are pairwise disjoint.
\end{enumerate}
\end{prop}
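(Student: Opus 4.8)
The strategy is to combine the entropy interpretation (Proposition~\ref{l: signification of entropy}) with the Lyapunov control on holonomy derivatives, using the discreteness of $M$ to guarantee that the many holonomy images we produce are genuinely disjoint. First I would work inside a generic leaf $L_x$ and fix a $(C,D)$-lattice $Z \subset L_x$, where $C,D$ are chosen in terms of $r$, $t_0$ and the geometry of the Poincar\'e metric, so that any two distinct points of $Z$ are separated enough that — \emph{provided} we already know the holonomies do not overlap — the Koebe distortion estimates at the top of the subsection apply on scale $r$. For each lattice point $z$ I would pick a Brownian path $\gamma_z \in \Gamma_x$ landing near $z$ at a suitable time, restricting attention to the event $\Gamma_\epsilon$ of paths with uniform radius and time bounds, which has $W_x$-probability $\geq 1/2$; then by Proposition~2.1 of \cite{DK} the holonomy $h_{x,z}$ along such a path is defined on $\mathbb D(2r_0) \supset \mathbb D(2r)$ and satisfies $e^{t(\lambda-\epsilon)} \leq |h'_{x,z}(0)| \leq e^{t(\lambda+\epsilon)}$.

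Next I would apply the geometric entropy statement. The time-$n$ Brownian distribution pushed to $Z$ via the Voronoi projection $\pi$ gives $M_n$; by Proposition~\ref{l: signification of entropy} there is a subset $Z_n \subset Z$ with $M_n(Z_n) \geq 1/2$ and $\tfrac1n \log|Z_n| \to h$, so in particular $|Z_n| \geq e^{n(h - 3\epsilon)}$ for $n$ large. Intersecting the event ``$\pi(\gamma(n)) \in Z_n$'' with $\Gamma_\epsilon$ still carries positive (indeed $\geq$ some fixed fraction) $W_x$-mass over each tile, so for each $z \in Z_n$ we can select a path $\gamma_z \in \Gamma_\epsilon$ with $\gamma_z(n)$ in the tile $T_z$; this produces, after transporting by the holonomy to the transversal $\tau_x$, a family $I_n' \subset L_x \cap \tau_x$ of representatives, one per tile, with $|I_n'| = |Z_n| \geq e^{n(h-3\epsilon)}$. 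Since all the times $t$ involved lie in a narrow window around $n$ (Brownian distance grows linearly, so $\gamma_z(n)$ at graph-distance $O(n)$ is reached at time comparable to $n$; one adjusts constants), the derivative bound gives $|h'_{x,z}(0)| \in [e^{n(\lambda-2\epsilon)}, e^{n(\lambda+2\epsilon)}]$, which is exactly item (ii) with $\lambda_n := \log|h'_{x,z}(0)|$ — or rather, since $\lambda_n$ should not depend on $z$, I would use a pigeonhole/subdivision of the interval $[n(\lambda-2\epsilon), n(\lambda+2\epsilon)]$ into finitely many pieces and pass to the subfamily where $\log|h'_{x,z}(0)|$ lands in a fixed sub-window, absorbing the loss into the $3\epsilon$ in (i). Conclusion~(1) is then immediate from the Koebe distortion inequality applied on $\mathbb D(2r)$ with $\rho = r$.

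\textbf{The main obstacle} is proving the disjointness statement~(2), and this is precisely where discreteness of $M$ enters. The images $h_{x,z}(\mathbb D(r))$ are transverse discs of radius $\asymp r e^{\lambda_n}$, which shrinks to a point as $n\to\infty$; two of them could a priori land in the same tiny region of $\tau_x$. If $h_{x,z}(\mathbb D(r))$ and $h_{x,z'}(\mathbb D(r))$ overlapped for $z \neq z'$, then the composition $h_{x,z'} \circ h_{x,z}^{-1}$ (suitably restricted) would be a nontrivial holonomy map of $M$ defined near a point and mapping a smaller transversal disc into a larger one; letting $n$ vary and using that $z, z'$ are at bounded leafwise distance (they are in a lattice, and one can arrange the overlapping pairs to have $d(z,z')$ bounded, since $\gamma_z(n), \gamma_{z'}(n)$ land in overlapping-image tiles forces their separation to be controlled), these composed holonomies would be uniformly close to the identity and distinct from it — contradicting that $M$ is \emph{discrete}. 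Making this clean requires care: I would first note that overlapping images force the corresponding lattice points to be close (because the plaques through $z$ and $z'$ would then meet a common transversal disc, and by the compactness of $U(\FF)$ / bounded geometry the leafwise distance is bounded), then extract from the infinitely many $n$ a sequence of such holonomies converging to the identity, contradicting discreteness. The remaining bookkeeping — that $\{h_{x,z}(\mathbb D(r))\}_{z\in I_n'}$ all lie inside $\tau_x$ rather than escaping it — follows from the choice $r = r_0 e^{-2\kappa}/8$ together with $W_x(\Gamma_\epsilon) \geq 1/2$ and the upper Koebe bound, since $r e^{\lambda_n + \log 2} e^\kappa \leq r_0$ for $n$ large and $\lambda_n < 0$.
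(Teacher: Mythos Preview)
Your overall architecture matches the paper's: project time-$n$ Brownian endpoints from $\Gamma_\epsilon$ onto a lattice, invoke the entropy lower bound to count them, pigeonhole on $\log|h'_{x,z}(0)|$ to fix a window $[\lambda_n, \lambda_n + \log 2]$, and finally use discreteness for disjointness. Two corrections are needed. The lattice must be taken specifically as $Z = L_x \cap \mathbb{D}(r/2)$, the returns of the leaf to a small sub-disc of $\tau_x$; by minimality and compactness this is a $(C,D)$-lattice, and this choice is what makes each $h_{x,z}$ a self-map of $\tau_x$ with $h_{x,z}(0) = z \in \mathbb{D}(r/2)$, so that (1), (2) and $h_{x,z}(\mathbb{D}(r)) \subset \mathbb{D}(r)$ are meaningful. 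One then sets $J_n := \pi(\{\gamma(n): \gamma \in \Gamma_\epsilon\})$ directly, which automatically has $M_n(J_n) \geq 1/2$; your formulation introduces an auxiliary $Z_n$ and then asserts without justification that every tile of $Z_n$ is reached by a path in $\Gamma_\epsilon$.

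The genuine gap is in the disjointness step. You claim that if $h_{x,z}(\mathbb{D}(r))$ and $h_{x,z'}(\mathbb{D}(r))$ overlap then $z$ and $z'$ are at bounded \emph{leafwise} distance, since ``the plaques through $z$ and $z'$ meet a common transversal disc''. This is false: $z$ and $z'$ lie on the \emph{same} leaf $L_x$, and a dense leaf returns to any small transversal disc infinitely often with arbitrarily large leafwise separation between returns. Overlap only gives $|z-z'| \leq r_n$ in the \emph{transversal} coordinate. Your plan to let $n$ vary and produce nontrivial holonomies converging to the identity therefore does not work; the compositions $h_{x,z'}^{-1}\circ h_{x,z}$ have $|\log|h'(0)||$ bounded by $2\kappa+\log 2$, not tending to zero, and nothing forces them to be distinct.

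The paper's argument is not by contradiction. With $\kappa' := 2\kappa + \log 2$ one sets
\[
\mathcal{H} := \{\, h: \mathbb{D}(r) \to \tau_x \ \text{holonomy},\ -\kappa' \leq \log|h'(0)| \leq \kappa' \,\},
\]
a normal family of univalent maps; discreteness of the pseudo-group forces $\mathcal{H}$ to be \emph{finite}. A Koebe computation shows that whenever $|z-z'| \leq r_n := 2r e^{\lambda_n + \log 2 + \kappa}$ the composition $h_{x,z'}^{-1}\circ h_{x,z}$ restricted to $\mathbb{D}(r)$ lies in $\mathcal{H}$. One then passes to a set $I_n' \subset I_n$ of representatives modulo right composition by $\mathcal{H}$, losing only the constant factor $|\mathcal{H}|$ in cardinality (absorbed into the third $\epsilon$); distinct representatives then satisfy $|z-z'| > r_n \geq 2\,\mathrm{diam}\, h_{x,z}(\mathbb{D}(r))$, which gives (2).
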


Let us see how theorem \ref{t: dimh} follows. Let $m \geq 1$ and
\[  \mathcal H_m := \{ h_{x,z_1} \circ \ldots \circ h_{x,z_m} \ , \   z_i \in I'_n\}  . \]
Next we introduce 
\[ K_m := \bigcup_{z \in \{ h(0) , \, h \in \mathcal H_m \} } \,  \mathbb D_z \left( r (e^{\lambda_n} e^{- 2\kappa})^m \right) , \]
these discs are pairwise disjoint and $K_m \subset \bigcup_{h \in \mathcal H_m} h (\mathbb D(r))$ by distortion. The family $\mathcal H_m$ forms an iterated function system. The limit set $K := \bigcap_m K_m$  satisfies \[ \dim _H  K \geq \frac{n(h-3\varepsilon)}{- \lambda_n  +  2\kappa } \geq \frac{h-3\varepsilon}{-\lambda + 3 \epsilon} , \]
where the second inequality relies on $\lambda_n \geq n (\lambda-2\epsilon)$ and $n \epsilon \geq 2\kappa$. That estimate holds for the Hausdorff dimension of $M \cap \tau_x$ since it contains $K$.
 
\subsection{Proof of proposition \ref{p: separated}} \label{ss:pseparated}

We work for $n$ large enough. The definition of $r$ is given before proposition \ref{p: separated}. First we show that there exist a set $I_n$ of cardinality larger than $e^{n (h-2\varepsilon)}$ and a negative number $\lambda_n$ satisfying $(ii)$ such that for any $z \in I_n$: 
\begin{enumerate}
\item[(a)]  $h_{x,z}$ is defined from $\mathbb D(2r_0) \subset \tau_x$ to $\tau_x$,
\item[(b)] $\lambda_n \ \leq \ \log \lvert h'_{x,z}  (0) \lvert \ \leq \ \lambda_n + \log 2$,
\item[(c)] $\mathbb D_z(r e^{\lambda_n} e^{-\kappa})  \subset  h_{x,z} (\mathbb D(r)) \subset \mathbb D_z (r e^{\lambda_n+\log 2} e^\kappa)$ (which is (1)),
\item[(d)] $h_{x,z} (\mathbb D(r)) \subset \mathbb D(r)$.
\end{enumerate}

Let $Z := L_x \cap \mathbb D (r/2)$.  By compactness and minimality of $M$, the subset $Z$ is a $(C,D)$-lattice. Recall that $\pi$ is the projection $L_x \rightarrow Z$ and $M_t = \pi_* B_t$, where $B_t$ is the distribution of Brownian motion at the time $t$. We set $\Gamma_{\varepsilon} (n) := \{  \gamma(n) \, , \, \gamma \in \Gamma_\epsilon \}$ and $J_n := \pi (\Gamma_\varepsilon(n) )$. Since $M_n ( J_n ) \geq 1/2$  by definition of $\Gamma_\varepsilon$,  proposition \ref{l: signification of entropy} yields $\lvert J_n \rvert \geq e^{n (h-\varepsilon) }$.

By compactness of $M$, there exist $E>0$ and $r_1 >0$ such that if $\gamma\subset L_{y}$ is a compact smooth path starting at $y$ of length bounded by $D$, then the holonomy map $h_\gamma$  is defined on $\mathbb D (r_1)$ and satisfies $- E \leq \log \lvert h'_{\gamma} (y) \rvert \leq E$.

By construction, every point $z \in J_n$ is $D$-distant from a point $y \in \Gamma_\varepsilon (n)$. 
By definition of $\Gamma_\epsilon$, $r_\gamma (\epsilon)$, $t_\gamma (\epsilon)$ and distortion theorem, $h_{x,y}$ is well-defined on $\mathbb D(2r_0)$ and its image is  contained in a disc of radius $2 r_0 e^{n (\lambda + \varepsilon)} e^\kappa$. Since that radius is smaller than $r_1$, the map $h_{x,z} = h_{y,z} \circ h_{x,y}$ is well-defined on $\mathbb D(2 r_0)$ and satisfies
\[ -E + n (\lambda -\varepsilon )  \leq \log \lvert h_{x,z} '(0) \rvert \leq E + n (\lambda + \varepsilon ) .\]
That implies $(a)$. That moreover implies that there exist  $\lambda_n$ satisfying 
 $$-E + n (\lambda -\varepsilon )  \leq \lambda_n \leq  E + n (\lambda + \varepsilon )$$ 
 and a subset $I_n\subset J_n$ of cardinality 
\begin{equation*}\label{ent} 
| I_n| \geq \frac{|J_n|\log 2} { 2 (E + \varepsilon n)}
\end{equation*}
such that $(b)$ is satisfied for every $z \in I_n$. The expected bounds $| I_n| \geq e^{n (h-2\varepsilon)}$ and (ii) for $\lambda_n$ follow by taking $n$ large. Koebe theorem ensures $(c)$ (which is (1) of proposition \ref{p: separated}), and we get $(d)$  from $h_{x,z}(0) \in \mathbb D(r/2)$ and $\text{diam} \, h_{x,z} (\mathbb D(r)) \leq r e^{\lambda_n+\log 2}e^\kappa$. \\ 

It remains to build $I_n' \subset I_n$ satisfying $(i)$ and $(2)$. 
Let $\kappa' := 2\kappa + \log 2$ and   
$$\mathcal H := \{ h: \mathbb D(r) \rightarrow \tau_x \textrm{ holonomy map } , \,    - \kappa ' \leq \log |h'(0)| \leq  \kappa ' \} . $$ 
That set is finite because the foliation is discrete. Let $r_n := 2 r e^{\lambda_n + \log 2} e^\kappa$ and $(z,z') \in I_n \times I_n$ satisfying  $|z - z'| \leq r_n$. Koebe   theorem yields \[ h_{x,z} \, (\mathbb D( 2r ) )\subset  \mathbb D_z (r_n) ,\]
\[ h_{x,z'} \, (\mathbb D( r_0 ) )\supset  \mathbb D_{z'} (r_0 e^{\lambda_n} e^{-\kappa}) = \mathbb D_{z'} ( 2 r_n)  \supset \mathbb D_z (r_n). \]
We deduce that $h := h_{x,z'} ^{-1} \circ h_{x,z} : \mathbb D(2r) \to \mathbb D(r_0)$ is well defined. One easily verifies that  $- \kappa ' \leq \log |h'(0)| \leq  \kappa '$, hence the restriction $h : \mathbb D(r) \to \mathbb D (r_0)$ belongs to  $\mathcal H$. Let $I_n' \subset I_n$ such that $\{ h_{x,z} \}_{z \in I_n'}$ represents the classes of $I_n$ modulo $\mathcal H$ by right composition. That set has cardinality 
$$|I_n'| \geq |I_n| / \vert \mathcal H \vert  \geq e^{n (h-2\varepsilon)} / \vert \mathcal H \vert \geq e^{n (h-3\varepsilon)} .  $$ 
The fact that   $\{ h_{x,z}(\mathbb D(r)) \}_{z \in I'_n}$ are disjoint follows from $\text{diam} \, h_{x,z} (\mathbb D(r)) \leq r_n / 2$ for $z \in I_n$ and from $|z - z'| > r_n$ for points of $I'_n$.

\subsection{Applications of theorem \ref{t: dimh}} 

\subsubsection{Proof of theorem~\ref{t:non discrete}}

We use the following lemmas.
\begin{lem} \label{t: irregular}
Let $S$ be a foliated complex surface and $M$ a minimal set with no transverse invariant measure. If $M$ is not discrete then it is an analytic Levi-flat. 
\end{lem}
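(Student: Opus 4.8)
The plan is to show that non-discreteness, together with the absence of a transverse invariant measure, rigidifies the transverse structure of $M$ into a real-analytic projective one, which exhibits $M$ as a real-analytic hypersurface foliated by its complex leaves.

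Since $M$ is minimal and carries no transverse invariant measure, its holonomy pseudo-group is a conformal pseudo-group acting minimally on the transversal $M\cap\tau$ of a foliated box, and $\FF$ is hyperbolic with a unique harmonic measure of negative Lyapunov exponent (\cite{DK}). Assuming $M$ is not discrete, there are holonomy maps $h_n\neq\mathrm{id}$ converging uniformly to the identity on a transversal disc. First I would renormalise this sequence: following the germ-dynamics analysis of \cite{LR} (see also \cite{Nakai}), one extracts a non-trivial local holomorphic vector field whose flow lies in the closure of the pseudo-group, and, combining this with the ergodic input of \cite{DK}, one shows that the holonomy pseudo-group preserves a transverse holomorphic \emph{projective} structure on $M$: each transversal carries a chart $\tau\to\mathbb{CP}^1$ in which the holonomy maps become restrictions of elements of $\mathrm{PSL}(2,\mathbb C)$. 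The remaining alternatives of the classification must be disposed of. A locally transitive closure would make $M\cap\tau$ open, hence $M=S$, which is excluded here. An affine transverse structure would force the holonomy to fix a point of $\mathbb{CP}^1$, and then minimality of the action on $M\cap\tau$ leaves only the possibilities of a Lebesgue- or rotation-invariant transverse measure, or of $M\cap\tau$ being a single point, i.e. $M$ a compact leaf — which itself carries a transverse invariant measure. All of these contradict the hypotheses. This reduction to a transverse $\mathrm{PSL}(2,\mathbb C)$-structure is the technical core, and the step I expect to be the main obstacle, since it is exactly where the classification of \cite{LR} and the ergodic theory of \cite{DK} are combined and the degenerate configurations are eliminated.

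Given the transverse projective structure, I would conclude by inspecting the closed subgroups of $\mathrm{PSL}(2,\mathbb C)$. In the charts, $M\cap\tau$ corresponds to a closed subset of $\mathbb{CP}^1$ that is minimal and invariant under the holonomy group, hence under its closure $G$; by non-discreteness $G$ is a positive-dimensional closed subgroup of $\mathrm{PSL}(2,\mathbb C)$ which fixes no point of $\mathbb{CP}^1$, carries no invariant probability on $\mathbb{CP}^1$, and is not all of $\mathrm{PSL}(2,\mathbb C)$. The only such subgroups are, up to conjugacy and finite index, the copies of $\mathrm{PSL}(2,\mathbb R)$: the compact conjugates of $\mathrm{PSU}(2)$ preserve a round measure, and every other proper closed subgroup is conjugate into the stabiliser of a point. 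The unique minimal set of $\mathrm{PSL}(2,\mathbb R)$ on $\mathbb{CP}^1$ is the circle $\mathbb{RP}^1$, so $M\cap\tau$ is, in each chart, a real-analytic arc; these arcs glue into a real-analytic hypersurface $M\subset S$ whose CR distribution $TM\cap iTM$ coincides with the tangent bundle of $\FF$ and is therefore integrable. Hence $M$ is an analytic Levi-flat, as claimed.
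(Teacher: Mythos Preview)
Your proposal correctly identifies the first key ingredient---extracting, via \cite{LR} and \cite{Nakai}, a non-trivial local holomorphic flow in the closure of the pseudo-group---but then takes an unjustified detour. The claim that the holonomy pseudo-group preserves a transverse holomorphic \emph{projective} structure does not follow from the references you cite: having a flow in the closure does not conjugate the whole pseudo-group into $\mathrm{PSL}(2,\mathbb C)$, and nothing in \cite{DK} supplies such a conjugation either. Indeed, for non-solvable non-discrete pseudo-groups the closure is typically infinite-dimensional (it contains flows of many vector fields), so there is no reason to expect it to sit inside the three-dimensional group $\mathrm{PSL}(2,\mathbb C)$. You flag this reduction as ``the main obstacle'' and rightly so, but you do not bridge it; the subsequent elimination of affine and locally transitive cases and the classification of closed subgroups of $\mathrm{PSL}(2,\mathbb C)$ therefore rest on an unestablished hypothesis.

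The paper's proof is much shorter and stops exactly where your argument should have. From non-discreteness and the absence of a transverse invariant measure, \cite[Corollary~1.3]{DK} yields a hyperbolic fixed point in $M$; then \cite[\S4]{LR} produces local holomorphic flows in the closure of the pseudo-group. That already suffices: $M\cap\tau$ is closed and invariant under such a flow, and the non-trivial orbits of a holomorphic flow are real-analytic arcs, so $M$ is an analytic Levi-flat. No transverse projective structure and no Lie-subgroup classification are required.
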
 

\begin{proof} Assuming that $M$ has no transverse invariant measure and  is not discrete,  there exists a hyperbolic fixed point in $M$ \cite[corollary 1.3]{DK}. That implies by \cite[section 4]{LR} that there are local holomorphic flows  in the closure of the pseudo-group. The minimal set is then analytic.  \end{proof} 

In surfaces of general type, theorem \ref{t: dimh} allows to prove a converse (see lemma \ref{t: first} below) when a leaf is simply connected. We begin with the 

\begin{lem} \label{simplyco}
Let $S$ be K\" ahler surface, $\FF$ a holomorphic foliation and $M$ a minimal set. If $M$ has a simply connected leaf, then every leaf is simply connected except for a countable number. 
\end{lem}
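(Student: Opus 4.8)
The plan is to exploit the fact that the leaves of a holomorphic foliation on a compact complex manifold vary holomorphically in a suitable moduli of pointed Riemann surfaces, so that the map assigning to a point of $M$ the isomorphism class of its leaf (with marked point) is holomorphic in the transverse parameter, and hence the set of transversals where the leaf fails to be simply connected is an analytic subset. First I would pass to a foliated chart $U_j \simeq \mathbb D \times F$ and, on the holonomy covering, build the holomorphic family of leaves through the points of a transversal $\{0\}\times F$; the key object is the period/uniformization data of these pointed Riemann surfaces, which depends holomorphically on the transverse variable $t$ by the classical theory of holomorphic families of Riemann surfaces (or, equivalently, by the holomorphic dependence of solutions of the $\bar\partial$-equation / the leafwise Bergman or Poincaré metric). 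A leaf $L_t$ is simply connected if and only if its universal cover equals $L_t$ itself, equivalently the holonomy covering map from $\mathbb D$ (or $\mathbb C$, but here all leaves are hyperbolic in the cases of interest) to $L_t$ is injective; this is a closed condition, but more importantly the non-simply-connectedness locus is the support of an analytic subset of the transversal.

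The cleanest way to make this precise is via the developing map: fix a transversal $T$ to $\FF$ and a point $t_0\in T$ whose leaf $L_{t_0}$ is simply connected. Using simple connectedness of $L_{t_0}$, one can holonomy-transport a large compact piece of $L_{t_0}$ to a holomorphic family of embeddings $\iota_t : K \hookrightarrow S$ of a fixed compact bordered surface $K$ into nearby leaves, depending holomorphically on $t$ in a neighborhood of $t_0$. If $L_t$ is \emph{not} simply connected, there is a loop in $L_t$ with nontrivial holonomy, and by exhausting $L_t$ one finds such a loop inside $\iota_t(K)$ once $K$ is taken large enough; the condition ``the holonomy of the loop $\iota_t(c)$ is the identity germ'' is a holomorphic condition on $t$ (two holomorphic germs agree iff their Taylor coefficients agree, each of which is holomorphic in $t$). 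Hence the set of $t$ near $t_0$ for which $L_t$ is non-simply connected is contained in a countable union of proper analytic subsets — proper because it misses $t_0$ — and therefore has empty interior and, being contained in a countable union of zero sets of holomorphic functions, is at most countable in any one-dimensional transversal. Covering $M$ by finitely many foliated charts (and noting $M$ is compact, with transversals of complex dimension one) and patching these local statements, the exceptional set on $M$ is a countable union of such locally-countable sets, hence at most countable.

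The main obstacle I anticipate is the uniformity needed to produce, for \emph{every} non-simply-connected nearby leaf $L_t$, a loop living in the controlled compact region $\iota_t(K)$: a priori the essential topology of $L_t$ could ``escape to the boundary'' of the leaf as $t$ varies. Here is where the hypotheses are used: on a compact surface $S$ one has uniform control of the geometry of leaves (bounded geometry, and in the hyperbolic/general-type setting the continuity of the Poincaré metric, Theorem~\ref{compactness}), which bounds the systole of $L_t$ from below in terms of the distance travelled from the marked point, so a short essential loop, if it exists, must appear within a definite leafwise radius; choosing $K$ to contain that radius makes the argument uniform. A secondary technical point is justifying the holomorphic dependence of the holonomy germs on $t$ — this follows from the holomorphic dependence of the foliated charts $\alpha_{jj'}$ in \eqref{rule} together with the fact that a finite concatenation of such charts represents the holonomy along a loop — and then invoking that a non-identically-zero holomorphic function on a connected one-dimensional domain has isolated zeros to conclude local countability. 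Everything else is routine bookkeeping with charts and Baire-type counting.
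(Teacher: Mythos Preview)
Your approach has a genuine gap, and it stems from running the argument in the wrong direction. You transport a compact piece $K$ of the simply connected leaf $L_{t_0}$ to nearby leaves and study the holonomy of loops $\iota_t(c)$ with $c\subset K$. But any such loop traverses the \emph{same} finite sequence of flow boxes as $\iota_{t_0}(c)$, so its holonomy is the germ at $t$ of a single holomorphic map $h_c$; since $\iota_{t_0}(c)$ is null-homotopic in $L_{t_0}$, the germ of $h_c$ at $t_0$ is the identity, and by the identity theorem $h_c$ is the identity everywhere it is defined. Thus \emph{every} loop you can produce by transport from $L_{t_0}$ has trivial holonomy for all $t$, and your detector never fires. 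The non-trivial topology of a nearby leaf $L_t$ is precisely topology that does \emph{not} lift to the transported disc, so your uniformity/systole discussion cannot rescue the argument.

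Two further points. First, you never use the K\"ahler hypothesis; in the paper it is essential, via Ivashkovich's theorem that K\"ahler foliations have no vanishing cycles. Combined with minimality and the existence of a simply connected leaf, this is exactly what guarantees that a homotopically non-trivial leafwise loop has non-trivial holonomy germ --- the implication you assert without proof. Second, the paper's argument runs from the other side: choose a non-trivial loop in \emph{each} non-simply-connected leaf; there are only countably many possible cyclic sequences of flow boxes, so if uncountably many such leaves existed, uncountably many of the chosen loops would share one sequence and hence one holonomy map $h$. Each basepoint is a fixed point of $h$, so $h$ has uncountably many fixed points on a one-dimensional transversal and is therefore the identity --- contradicting the previous step. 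No systole estimates or moduli of Riemann surfaces are needed.
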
 

\begin{proof}
First observe that the K\"ahler property implies that there is no vanishing cycle \cite{Ivash}. Then, by minimality and simply connected assumption, the holonomy of a leafwise (homotopically) non trivial loop is not the identity. Indeed, any deformation of that loop in a close leaf remains non trivial, and by minimality the simply connected leaf must contain such a deformation.

Now suppose that there are uncountably many leaves which are not simply connected. Consider a finite covering of $M$ by flow boxes. Let us fix in each non simply connected leaf a non trivial loop. Each of them induces a cyclic sequence of crossed flow boxes. By cardinality, there exists a cyclic sequence attained by uncountably many loops. Since the domain of definition of the corresponding holonomy map is open, one of its connected component has to intersect uncountably many loops. The holonomy map is therefore equal to identity on the component, which has been excluded. 
\end{proof}

\begin{lem} \label{t: first}
Let $S$ be a surface of general type, $\FF$ a holomorphic foliation and $M$ a minimal set with no transverse invariant measure. If $M$ has a simply connected leaf and is discrete then $\dim_H M > 3$.
\end{lem}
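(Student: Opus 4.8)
The plan is to combine the dimension bound of Theorem~\ref{t: dimh} with the specific values of the Kaimanovich entropy and the Lyapunov exponent that hold in the present situation. Since $M$ is discrete and has no transverse invariant measure, Theorem~\ref{t: dimh} applies for the choice of the leafwise Poincar\'e metric (which is legitimate because $\FF$ is hyperbolic by Proposition~\ref{hyphyp}, $S$ being of general type), and gives $\dim_H M \geq h/|\lambda|$, where $h$ is the Kaimanovich entropy and $\lambda$ the Lyapunov exponent of the unique harmonic measure $\mu$ for the Poincar\'e metric.

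\medskip

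\noindent\textbf{Step 1: the entropy is $1$.} First I would argue that $h=1$. By Lemma~\ref{simplyco}, since $M$ carries a simply connected leaf and $S$ is K\"ahler (being of general type), $\mu$-almost every leaf is simply connected, hence isomorphic to $\mathbb D$; with the curvature $-1$ Poincar\'e metric the Kaimanovich entropy of the hyperbolic disc is $1$, so $h=1$ (this is exactly the case recalled after the definition of $h_m$: ``if $\mu$-almost every leaf is the unit disc and $m$ is the leafwise Poincar\'e metric of curvature $-1$, then $h_m=1$''). One should just check that the countable exceptional set of non-simply-connected leaves from Lemma~\ref{simplyco} is $\mu$-null; this follows because a harmonic measure on a minimal set without transverse invariant measure gives no mass to a countable union of single leaves (each leaf, being non-compact and of infinite volume, is $\mu$-null, and a countable union of null sets is null).

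\medskip

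\noindent\textbf{Step 2: the exponent lies in $(-1,0)$.} Next, Proposition~\ref{lyapgentype} applies verbatim to $M$ (a minimal set in a surface of general type, with the Poincar\'e metric), giving $\lambda > -1$. On the other hand, since $M$ has no transverse invariant measure, the unique harmonic current $T$ is not a foliated cycle, so $\lambda = -2\pi\, T\cdot N_\FF$ cannot vanish; more directly, the recalled result of \cite{DK} stated just before Theorem~\ref{t: dimh} asserts that in the absence of a transverse invariant measure the Lyapunov exponent of the unique harmonic measure is negative, so $\lambda < 0$. Thus $\lambda \in (-1,0)$, whence $|\lambda| < 1$.

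\medskip

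\noindent\textbf{Step 3: conclusion.} Combining, $\dim_H M \geq h/|\lambda| = 1/|\lambda| > 1$. To upgrade the trivial bound to $\dim_H M > 3$, I would recall that $\dim_H M = 1 + \dim_H(M\cap\tau_x)$ for a transversal $\tau_x$ (the leaves are real surfaces, contributing $2$ to the dimension, and the transverse direction is a real line; more precisely $\dim_H M = 2 + \dim_H^{\mathrm{transverse}} M$ by a product/Fubini-type argument for the foliated structure), while the proof of Theorem~\ref{t: dimh} in fact produces the lower bound $h/|\lambda|$ for the \emph{transverse} Hausdorff dimension $\dim_H(M\cap\tau_x)$. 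Hence $\dim_H M \geq 2 + 1/|\lambda| > 2 + 1 = 3$, which is the desired strict inequality. The main obstacle is the careful bookkeeping of transverse versus total Hausdorff dimension: one must make sure that Theorem~\ref{t: dimh} is invoked in its transverse form (the limit set $K$ constructed there sits inside $M\cap\tau_x$), so that the ``$+2$'' from the leaf directions is genuinely gained and the bound $>3$ (rather than merely $>1$) follows; the entropy and exponent inputs (Steps 1--2) are then routine given the results already established in the excerpt.
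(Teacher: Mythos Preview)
Your argument is correct and follows the same route as the paper: hyperbolicity (Proposition~\ref{hyphyp}) allows the Poincar\'e metric, Lemma~\ref{simplyco} forces $\mu$-a.e.\ leaf to be a disc so $h=1$, Proposition~\ref{lyapgentype} gives $\lambda>-1$, and the transverse version of Theorem~\ref{t: dimh} then yields transverse dimension $>1$, hence $\dim_H M>3$. One small slip to clean up in Step~3: you first write $\dim_H M = 1 + \dim_H(M\cap\tau_x)$ and describe the transverse direction as ``a real line'', but for a minimal set in a complex surface the transversal $\tau_x$ is a complex disc (real dimension $2$), not an interval; your parenthetical correction $\dim_H M = 2 + \dim_H^{\mathrm{transverse}} M$ is the right formula and is what you actually use.
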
 

\begin{proof}
We endow the leaves with the Poincar\'e metric $m$. Let $\mu$ be the unique harmonic measure and $\lambda$ its Lyapunov exponent. The measure $\mu$ has no atom since it is not transversely invariant. By lemma \ref{simplyco}, $\mu$-a.e. leaf is isomorphic to the disc, hence the Kaimanovich entropy is then equal to $1$. Proposition \ref{lyapgentype}  and theorem \ref{t: dimh} finally imply that the transverse Hausdorff dimension of $M$ is larger than $h_m / \vert \lambda \vert >1$.  
\end{proof} 

\subsubsection{Margulis-Ruelle type inequalities}

From $\dim_H M \cap T\leq 2$ for minimal sets and $\dim_H M \cap T = 1$ for Levi-flats, we obtain:

\begin{cor} \label{c: MR}
We take the assumptions of theorem \ref{t: dimh}. The Lyapunov exponent of the harmonic measure satisfies $\lambda \leq -1/2$. If   $M$ is Levi-flat, then $\lambda \leq -1$.
\end{cor}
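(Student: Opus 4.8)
The plan is to read Theorem~\ref{t: dimh} backwards: the inequality it provides is in fact a lower bound for the \emph{transverse} Hausdorff dimension $\dim_H(M\cap\tau_x)$ of the minimal set (this is what its proof actually establishes), so combining it with the obvious geometric upper bound for that dimension turns it into a lower bound for $|\lambda|$. Thus the statement is essentially a reformulation of Theorem~\ref{t: dimh}, and all I have to do is feed in the elementary dimension estimates announced just before the corollary.

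First I would record those two estimates. A transversal $\tau_x$ to the foliation of a minimal set sits inside a holomorphic flow box, hence is biholomorphic to a disc in $\mathbb C$ and has real dimension $2$; therefore $\dim_H(M\cap\tau_x)\le 2$. If $M$ is a Levi-flat hypersurface, then $M$ has real codimension one in $S$, so a transversal to the CR foliation inside $M$ is an arc \emph{entirely contained in} $M$; thus $M\cap\tau_x=\tau_x$ and $\dim_H(M\cap\tau_x)=1$. Next, endowing the leaves with the leafwise Poincar\'e metric (curvature $-1$, the normalization under which the statement $\lambda\le -1/2$ is scale-invariant), the Kaimanovich entropy equals $1$: indeed $\mu$-almost every leaf is the disc here — for instance in the situation of Theorem~\ref{t:non discrete} via Lemma~\ref{simplyco} — and the no-transverse-invariant-measure hypothesis forces $h>0$ in any case. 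Plugging $h=1$ and the upper bounds into Theorem~\ref{t: dimh}, i.e. $1/|\lambda|\le\dim_H(M\cap\tau_x)$, yields $|\lambda|\ge 1/2$ in the general case and $|\lambda|\ge 1$ in the Levi-flat case, that is $\lambda\le -1/2$, resp. $\lambda\le -1$.

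I expect no real obstacle, since all the substance lies in Theorem~\ref{t: dimh}; the only points deserving a word are the identification of the entropy of the leafwise Poincar\'e metric with $1$ (a classical heat-kernel computation on $\mathbb D$, consistent with the linear escape rate of Proposition~\ref{kingman}) and, in the Levi-flat case, the applicability of Theorem~\ref{t: dimh} at all — which holds because a minimal Levi-flat is a minimal set of the ambient holomorphic foliation, so its holonomy enjoys the Koebe distortion control used in that proof. It is also worth noting that the Levi-flat bound $\lambda\le -1$ obtained this way matches the one coming from Lemma~\ref{l:pt at infinity} through a completely different route, which is a reassuring consistency check rather than an additional difficulty.
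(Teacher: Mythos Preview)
Your overall strategy is exactly the paper's one-line argument: combine the transverse bound established in the proof of Theorem~\ref{t: dimh} with the elementary upper bounds $\dim_H(M\cap\tau_x)\leq 2$ for a minimal set and $=1$ for a Levi-flat. The gap is your claim that $h=1$. The hypotheses of Theorem~\ref{t: dimh}, and hence of the corollary, do \emph{not} include a simply connected leaf, so Lemma~\ref{simplyco} is unavailable, and the paper asserts $h_m=1$ for the Poincar\'e metric only under that extra hypothesis. Without it, Theorem~\ref{t: dimh} yields only $|\lambda|\geq h/2$ (resp.\ $|\lambda|\geq h$), which gives the stated constants precisely when $h=1$. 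The paper's own proof is equally silent on this point, so in full generality the inequality should be read as $\lambda\leq -h/2$ (resp.\ $\lambda\leq -h$), specializing to $-1/2$ and $-1$ in the simply connected setting of the surrounding applications. Your parenthetical that ``$\lambda\leq -1/2$ is scale-invariant'' is also incorrect: both $h$ and $\lambda$ rescale with the metric, and only their ratio $h/|\lambda|$ is invariant.

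A smaller correction: the CR foliation of a Levi-flat need \emph{not} extend to an ambient holomorphic foliation on $S$ (the paper says so explicitly), so your justification for applying Theorem~\ref{t: dimh} in the Levi-flat case is misdirected. What makes the argument go through is rather that the real one-dimensional holonomy of the CR foliation satisfies a $C^2$ bounded-distortion estimate playing the role of Koebe's theorem.
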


That corollary provides an alternative proof of theorem~\ref{generesult} when $M$ is the unitary tangent bundle of a compact surface of genus $g \geq 2$. However, such a method does not work for a hyperbolic torus bundle, because its holonomy pseudo-group is not discrete: it contains an irrational rotation. 

\subsubsection{Dimension of  minimal sets in $\mathbb P^2$}\label{p2}

  Theorem \ref{t: dimh} implies the following lower bound for the transverse  dimension if there is a simply connected leaf. 
  
\begin{prop} Let $M$ be a minimal set of a holomorphic foliation of $\mathbb P^2$ of degree $d \geq 2$. If $M$ has a simply connected leaf, then 
\[ \dim_H M \cap T \geq \frac{d-1}{d+2}. \]
\end{prop}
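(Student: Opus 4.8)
The plan is to apply Theorem~\ref{t: dimh} directly, once we have verified that its hypotheses are satisfied and computed the relevant quantities. The statement concerns a minimal set $M$ of a holomorphic foliation $\mathcal F$ of $\mathbb P^2$ of degree $d\geq 2$ possessing a simply connected leaf, and the goal is the lower bound $\dim_H M\cap T\geq (d-1)/(d+2)$ for the transverse Hausdorff dimension. First I would recall that such a minimal set in $\mathbb P^2$ has no transverse invariant measure: this is a theorem of Camacho--Lins Neto--Sad, since a transverse invariant measure would produce a closed positive current of the wrong cohomological type (or, more simply, an algebraic invariant curve), contradicting the minimality. Consequently, by the results quoted in the excerpt (Camacho--Lins Neto--Sad again, via Proposition~\ref{cha} and Lemma~\ref{normalbott}), the foliation is hyperbolic, and we may endow the leaves with the leafwise Poincar\'e metric $m$ of curvature $-1$; there is then a unique harmonic measure $\mu$ by \cite{DK}.

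Next I would dispose of the discreteness hypothesis. If $M$ is \emph{not} discrete, then by Lemma~\ref{t: irregular} it would be an analytic Levi-flat hypersurface in $\mathbb P^2$; but an analytic Levi-flat in $\mathbb P^2$ cannot exist as a minimal set, since (as noted in the Remark of Section~\ref{s:sol PSL}) its CR foliation would extend to a singular holomorphic foliation of $\mathbb P^2$ admitting an invariant algebraic curve, contradicting minimality --- this is again \cite{CLS}. Hence $M$ is automatically discrete, and Theorem~\ref{t: dimh} applies, giving
\[ \dim_H M\cap T \;\geq\; \frac{h_m}{|\lambda|}, \]
where $h_m$ is the Kaimanovich entropy for the Poincar\'e metric and $\lambda$ the Lyapunov exponent of $\mu$.

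It remains to compute $h_m$ and $\lambda$. For the entropy: by Lemma~\ref{simplyco}, the presence of a single simply connected leaf forces $\mu$-a.e.\ leaf to be isomorphic to the unit disc, and for the Poincar\'e metric of curvature $-1$ on the disc the Kaimanovich entropy is $h_m=1$ (as stated after the definition of Kaimanovich entropy). For the exponent: Proposition~\ref{p: plane lyapunov exponent} computes exactly this --- using $N_{\mathcal F}=\mathcal O(d+2)$, $K_{\mathcal F}=\mathcal O(d-1)$ and Proposition~\ref{gogo} --- giving $\lambda=-\frac{d+2}{d-1}$, hence $|\lambda|=\frac{d+2}{d-1}$. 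Substituting, $\dim_H M\cap T\geq 1/|\lambda|=(d-1)/(d+2)$, which is the claim.

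I do not expect any serious obstacle here: the proposition is essentially a corollary assembled from Theorem~\ref{t: dimh}, Proposition~\ref{p: plane lyapunov exponent}, and Lemma~\ref{simplyco}, together with the classical non-existence of transverse invariant measures and of analytic Levi-flats for foliations of $\mathbb P^2$. The only point requiring a little care is confirming that the discreteness hypothesis of Theorem~\ref{t: dimh} is genuinely free in the $\mathbb P^2$ setting --- i.e.\ ruling out the non-discrete (analytic Levi-flat) alternative --- and this is handled by the combination of Lemma~\ref{t: irregular} with the $\mathbb P^2$ rigidity result of \cite{CLS}. One should also note that the bound obtained is the transverse dimension; since $\dim_H M\cap T\leq 2$ always, the estimate is only informative (i.e.\ consistent) in a range of $d$, but that is a remark rather than part of the proof.
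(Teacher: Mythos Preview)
Your proof is essentially correct and follows the same route as the paper: verify the hypotheses of Theorem~\ref{t: dimh}, use Lemma~\ref{simplyco} to get $h_m=1$, and invoke Proposition~\ref{p: plane lyapunov exponent} for $|\lambda|=(d+2)/(d-1)$.

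The only point where you diverge from the paper is in handling the non-discrete alternative. You try to \emph{exclude} it by arguing that an analytic Levi-flat cannot arise as a minimal set in $\mathbb P^2$; the paper instead simply observes that if $M$ is not discrete then (by Lemma~\ref{t: irregular}) it is an analytic Levi-flat, hence a real hypersurface, so its transverse dimension equals $1\geq (d-1)/(d+2)$, and the bound holds trivially. This is cleaner and avoids the detour. Your exclusion argument is also a bit tangled: you appeal to the Remark of Section~\ref{s:sol PSL}, but there the CR foliation has to be \emph{extended} to a global foliation via \cite{LN}, whereas in the present setting $M$ is already a minimal set of a given foliation $\mathcal F$, so there is nothing to extend; and the invariant-algebraic-curve step in that Remark used the transverse projective structure coming from the $\mathrm{Sol}$/$\mathrm{PSL}(2,\mathbb R)$ geometry, which is not available here. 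None of this invalidates the conclusion---the paper's one-line observation covers the case---but your argument for discreteness is doing more work than needed and with imprecise citations.
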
 

Indeed, if $M$ is not discrete, its transverse dimension is $1$ by lemma \ref{t: irregular}. If $M$ is discrete, its transverse dimension is larger than $1/\vert \lambda \vert$, and we use proposition \ref{p: plane lyapunov exponent}. Note that the conformal dimension of any conformal harmonic current is bounded above by the same constant \cite{Brunella}. Note also that these sets are not too small in the sense of potential theory: their harmonic current has finite energy \cite{FS1}.

\section{Appendix}  \label{kingking}

For reader's convenience we prove the classical propositions \ref{kingman} (drift of Brownian motion on $\mathbb D$). We propose arguments adapted to the unit disc, in particular we do not use the law of iterated logarithm, see \cite{Pinsky}. Recall that $\mathbb D$ is endowed with the Poincar\'e metric of gaussian curvature $-1$ and that we work with the heat equation ${\partial \over \partial t} = \Delta_m$. Let $W$ denote the  Wiener measure on the set of  continuous paths starting at the origin. By applying Kingman's   theorem to  $(\Gamma , (\sigma_t)_{t \geq 0}, W)$ and   $H_t(\gamma) :=d_P(\gamma(0), \gamma(t))$, there exists $\delta \in \mathbb R$ such that for $W$-almost every $\gamma \in \Gamma$:
\[ \lim_{t\rightarrow \infty} {1 \over t} d_P( 0, \gamma(t)) = \delta. \]
We want to show that $\delta =1$. 
Let  $\varphi : \mathbb D \to \mathbb R^+$ defined by $\varphi (z) = \frac{1-|z|^2}{|1 - z|^2}$. This is a harmonic function (it corresponds to the imaginary part  in the hyperbolic plane) and the laplacian of $\log \varphi$ with respect to the Poincar\'e metric is identically $-1$. Dynkin's formula asserts that 
\[ \forall t \geq 0 \ , \ \mathbb E \, (\log \varphi (\gamma(t) )   =   \log \varphi (0) +  \, \mathbb E \left( \int_0^t \Delta_m \log \varphi (\gamma(s) ) \, ds \right)  , \]  
where the expectation $\mathbb E$ holds on $\Gamma$ with respect to $W$. We deduce 
\begin{equation*}\label{lapla}
 \forall t > 0 \ , \  {1 \over t} \, \mathbb E \,(\log \varphi (\gamma(t) )) = -1 .
\end{equation*}
To complete the proof it suffices to show
\begin{equation}\label{newnew}
\lim_{t\rightarrow + \infty} {1 \over t} \, \mathbb E \,( \log \varphi (\gamma (t) ) ) =  - \delta . 
\end{equation}
Given a circle centered at the origin, its hyperbolic radius $R \in \mathbb R^+$ is related to its euclidian radius $r \in [0,1[$ by $r(R) = \frac{e^{R} -1 } {e^{R} +1}$.
For every $t \geq 0$, let $\rho_t$ be the pushforward measure of $W$ on $[0,1[$ by the map  $\gamma \mapsto r \circ d_P(0,\gamma(t))$.
Then $\mathbb E ( \log \varphi (\gamma (t) ) )$ is equal to
\begin{equation}\label{newnew2}
  \int_{[0,1[}  {1 \over 2\pi}  \int_0^{2\pi} \log \varphi (r e^{i\theta}) d\theta \ d\rho_t(r)  = :  \int_{[0,1[} M(r)  \, d\rho_t(r)  . 
\end{equation}
Using $r : \mathbb R^+ \to [0,1[$, we define
\begin{equation}\label{oupi}
 I_{t,\epsilon} :=   [ \, r( t(\delta-\epsilon) ) \, ,  \,  r( t(\delta+\epsilon) ) \,   ] .
 \end{equation}
We have $\rho_t(I_{t,\epsilon}) \geq 1-\epsilon$ for $t$ large enough, since $\lim_{t\rightarrow \infty} {1 \over t} d_P( 0, \gamma(t)) = \delta$ for almost every $\gamma$. We shall decompose the integral of (\ref{newnew2}) (right side) over $I_{t,\epsilon}$ and $[0,1[ \setminus I_{t,\epsilon}$. For that purpose observe that $M(r) = \log (1-r^2)$ from the classical formula $\frac{1}{2\pi} \int_0 ^{2\pi} \log |1- r e^{i\theta}|^2 d\theta = 0$. Now, from  
 \begin{equation*}\label{oup}
M(r) = \log (1 - r(R)^2) = \log \cosh ^{-2} (R/2) \in [-R, - R + \log 4 ] 
\end{equation*}
 and the definition (\ref{oupi}) of $I_{t,\epsilon}$,  we get 
\[ - t (\delta +  \epsilon)  \leq \int_{I_{t,\epsilon}}  M(r) \, d\rho_t(r)   \leq - t (\delta - \epsilon)  +  \log 4 . \]
Since $\rho_t$ is absolutely continuous with respect to Lebesgue measure on $[0,1]$ (from the heat kernel) and $M \in L^2[0,1]$, Cauchy-Schwarz inequality yields
\[  \int_{[0,1[ \setminus I_{t,\epsilon}}  M(r) \, d\rho_t(r) \leq  \epsilon \, \int_{[0,1]} M^2(r) \, d\rho_t(r) =: c \, \epsilon . \]
That completes the proof of (\ref{newnew}), and $\delta = 1$ as desired.

\vspace{1cm} 

\begin{small}

\noindent Universit\'e Paris-Sud and CNRS UMR 8628\\
B\^atiment 425, 91405 Orsay cedex, France.\\

\vspace{0.2cm}

\noindent e-mail: bertrand.deroin@math.u-psud.fr, christophe.dupont@math.u-psud.fr

\end{small}

\end{document}